\documentclass[a4paper,reqno]{amsart}
\usepackage{geometry}
\usepackage[T1]{fontenc}
\usepackage[utf8]{inputenc}
\usepackage{lmodern}
\usepackage{amsmath,amssymb,amsthm,mathtools}
\usepackage{setspace}
\usepackage{microtype}
\usepackage{enumitem,booktabs,needspace}
\usepackage{xcolor}
\usepackage[colorlinks]{hyperref}
\usepackage[nameinlink,capitalize,noabbrev]{cleveref}
\hypersetup{
 pdftitle={Relative Suspension and Higher Delooping for Support Tau-Tilting Modules},
 pdfauthor={Mingfei Xu and Xiaojin Zhang},
 pdfsubject={Relative homological algebra and support tau-tilting theory},
 pdfkeywords={relative suspension, support tau-tilting, relative transpose, higher delooping, tensor induction, finitistic dimension},
 linkcolor=blue,
 citecolor=blue,
 urlcolor=blue
}
\numberwithin{equation}{section}
\setlist[enumerate]{label=\textup{(\arabic*)},itemsep=3pt,topsep=5pt}

\newtheorem{introtheorem}{Theorem}

\newtheorem{theorem}{Theorem}[section]
\newtheorem{proposition}[theorem]{Proposition}
\newtheorem{lemma}[theorem]{Lemma}
\newtheorem{corollary}[theorem]{Corollary}
\theoremstyle{definition}
\newtheorem{definition}[theorem]{Definition}
\newtheorem{remark}[theorem]{Remark}
\newtheorem{example}[theorem]{Example}
\crefname{introtheorem}{Theorem}{Theorems}
\crefname{theorem}{Theorem}{Theorems}
\crefname{proposition}{Proposition}{Propositions}
\crefname{lemma}{Lemma}{Lemmas}
\crefname{corollary}{Corollary}{Corollaries}
\crefname{definition}{Definition}{Definitions}
\crefname{remark}{Remark}{Remarks}
\crefname{example}{Example}{Examples}
\Crefname{introtheorem}{Theorem}{Theorems}
\Crefname{theorem}{Theorem}{Theorems}
\Crefname{proposition}{Proposition}{Propositions}
\Crefname{lemma}{Lemma}{Lemmas}
\Crefname{corollary}{Corollary}{Corollaries}
\Crefname{definition}{Definition}{Definitions}
\Crefname{remark}{Remark}{Remarks}
\Crefname{example}{Example}{Examples}

\DeclareMathOperator{\add}{add}
\DeclareMathOperator{\Fac}{Fac}
\DeclareMathOperator{\Sub}{Sub}
\DeclareMathOperator{\End}{End}
\DeclareMathOperator{\Hom}{Hom}
\DeclareMathOperator{\Ext}{Ext}
\DeclareMathOperator{\Tor}{Tor}
\DeclareMathOperator{\Tr}{Tr}
\DeclareMathOperator{\dell}{dell}
\DeclareMathOperator{\ddell}{ddell}
\DeclareMathOperator{\findim}{findim}
\DeclareMathOperator{\Findim}{Findim}
\DeclareMathOperator{\gldim}{gl.dim}
\DeclareMathOperator{\modu}{mod}
\DeclareMathOperator{\proj}{proj}
\DeclareMathOperator{\pd}{pd}
\newcommand{\op}{\mathrm{op}}
\DeclareMathOperator{\ann}{ann}
\DeclareMathOperator{\rad}{rad}
\newcommand{\C}{\mathcal C}
\newcommand{\Y}{\mathcal Y}
\newcommand{\D}{\mathbb D}
\newcommand{\Abar}{\overline A}

\newcommand{\stC}{\underline{\mathcal C}_T}
\newcommand{\uHom}{\underline{\Hom}}

\title[Relative suspension and higher delooping]
{Relative Suspension and Higher Delooping for Support $\tau$-Tilting Modules}
\author[Mingfei Xu and Xiaojin Zhang]{Mingfei Xu and Xiaojin Zhang}
\thanks{Corresponding author: Xiaojin Zhang.}
\makeatletter
\@namedef{subjclassname@2020}{\textup{2020} Mathematics Subject Classification}
\makeatother
\date{}
\subjclass[2020]{Primary 16G10; Secondary 16E05, 16E10, 18G25}
\keywords{support $\tau$-tilting module; relative suspension; relative transpose; higher delooping level; tensor induction; finitistic dimension}

\begin{document}
\begin{abstract}
We construct relative suspension for the exact category generated by a support $\tau$-tilting module and characterize higher delooping by the units of the resulting adjunction, which are the generalizations of G\'elinas's results in Adv. Math.(2022). We prove that comparison with ordinary higher delooping requires a shift of one in the index. More precisely, for every $k,d\ge1$, there is a module whose ordinary $k$-delooping level is zero and whose relative level is $d$. We also prove that tensor induction preserves the relative constructions and higher delooping levels on induced modules. As an application, we study cyclic Nakayama algebras with arbitrary local coefficients and automorphism twists. Their left and right little and big finitistic dimensions and ordinary and derived delooping levels all equal the finitistic dimension of the underlying Nakayama algebra.
\end{abstract}
\maketitle

\section{Introduction}\label{sec:introduction}

Let $A$ be a finite-dimensional basic algebra over an algebraically closed field $K$, and let $T$ be a nonzero basic support $\tau$-tilting right $A$-module. All modules are finitely generated right modules unless a left action is displayed. Throughout the paper, put
\begin{equation}\label{eq:setup}
 I=\ann_A(T),\qquad \Abar=A/I,\qquad B=\End_A(T),\qquad \C=\Fac T.
\end{equation}
Here $\add T$ consists of the direct summands of finite direct sums of copies of $T$, and $\Fac T$ consists of the factor modules of such sums. The module $T$ is $1$-tilting over $\Abar$ \cite{AIR}. Consequently, $\C$ is an exact category with projectives $\add T$. We write $\Omega_T$ for its syzygy functor and $\stC=\C/[\add T]$ for its stable category. The Brenner--Butler equivalence identifies $\C$ with the torsionfree class $\Sub\D T$ in $\modu B$, where $\D=\Hom_K(-,K)$. We study relative suspension in this category and its relation to ordinary delooping over $B$.

G\'elinas introduced delooping level as a bound for finitistic dimension and studied it through the Auslander--Reiten adjunction $\Sigma=\Tr\Omega\Tr\dashv\Omega$ \cite{AuslanderReiten96,Gelinas}. Guo--Igusa introduced higher and derived delooping levels \cite{GuoIgusa}; their symmetry and their relation to other homological dimensions are studied in \cite{BarriosLanzilottaMata,GuoSymmetry}. Chen--Li--Zhang--Zhao used delooping in the exact category generated by a tilting module \cite{ChenLiZhangZhao}. Xu--Zhang defined relative higher delooping and obtained a finitistic-dimension bound for the endomorphism algebra \cite{XuZhang}. It is natural to ask: Is there a relative version of the Auslander--Reiten adjunction for the relative delooping level? In this paper, we give a positive answer to the question and generalize G\'elinas's results systematically.

For $M\in\C$ and $k\ge1$, the relative level $k$-$\dell_T(M)$ is the least $d\ge0$ such that $\Omega_T^dM$ is a retract of $\Omega_T^{d+k}N$ in $\stC$ for some $N\in\C$. Its value is $\infty$ if no such $d$ exists. The specialization $T=A$ gives ordinary higher delooping. Set
\[
 F=\Hom_A(T,-),\qquad G=-\otimes_BT.
\]
Our first result constructs a left adjoint of relative syzygy and identifies a canonical delooping witness.

\begin{introtheorem}[see \cref{thm:adjunction,thm:unit}]\label{intro:suspension}
The functor $\Omega_T$ on $\stC$ has a left adjoint
\[
 \Sigma_T=\underline G\,\Sigma_B\,\underline F,
 \qquad \Sigma_B=\Tr_{B^{\op}}\Omega_{B^{\op}}\Tr_B.
\]
If $a_X:X\to T^X$ is a left $\add T$-approximation, then $\Sigma_TX$ is represented by $\operatorname{Coker}a_X$. For $M\in\C$, $d\ge0$ and $k\ge1$, one has $k$-$\dell_T(M)\le d$ if and only if the iterated unit
\[
 \eta_{d+k,\Omega_T^dM}:\Omega_T^dM\longrightarrow
 \Omega_T^{d+k}\Sigma_T^{d+k}\Omega_T^dM
\]
is split monic in $\stC$.
\end{introtheorem}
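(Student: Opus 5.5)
The proof transports the Auslander--Reiten adjunction $\Sigma_B\dashv\Omega_B$ on $\underline{\modu}\,B$ along the Brenner--Butler equivalence, and then repeats Gélinas's argument on units.

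\emph{Step 1: $F$ and $G$ on stable categories.} Since $T$ is $1$-tilting over $\Abar$, the functor $F$ restricts to an exact equivalence $\C\simeq\Sub\D T$ with quasi-inverse $G$. It sends $\add T$ to $\proj B$, and the conflations of $\C$ to short exact sequences of $B$-modules, since $\Ext^1_A(T,\C)=0$. So $F$ induces a fully faithful functor
\[
\underline F:\stC\to\underline{\modu}\,B,
\]
because maps factoring through $\proj B$ pull back to maps factoring through $\add T$. Moreover $\underline F\,\Omega_T\cong\Omega_B\,\underline F$, because $F$ carries the relative projective cover $T_0\to M$ to a projective cover of $FM$. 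Extend $G$ to $\underline G:\underline{\modu}\,B\to\stC$. This needs $GY\in\C=\Fac T$, which holds since $Y\otimes_BT$ is a quotient of $B^n\otimes_B T=T^n$, and $G$ kills the right projectives modulo $\add T$. I will check that $\underline G$ is left adjoint to $\underline F$ on stable categories. The unit is $Y\to FGY$, and the counit $GF\to1$ is an isomorphism on $\C$. To verify the triangle identities stably it suffices that $\Hom_A(GY,X)\cong\Hom_B(Y,FX)$ is tensor--hom adjunction and that morphisms factoring through projectives correspond. The latter holds because $G(\proj B)=\add T$, and a map $Y\to FX$ factoring through a projective $P$ gives $GY\to GP\to GFX=X$.

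\emph{Step 2: the adjunction and the formula for $\Sigma_T$.} Composing adjunctions gives $\underline G\,\Sigma_B\,\underline F\dashv\underline G\,\Omega_B\,\underline F$. The right-hand side is $\underline G\,\underline F\,\Omega_T\cong\Omega_T$ by Step 1, so $\Sigma_T\dashv\Omega_T$. For the cokernel description, take $FX\in\Sub\D T$. Since $\D T$ is injective in $\Sub\D T$ and $\Sigma_B$ of a torsionless module is the cokernel of its left $\proj B$-approximation (Auslander--Reiten), and since $FX$ is torsionless (a submodule of a projective), we get $\Sigma_BFX=\operatorname{Coker}(FX\to P)$. Here $P$ is a minimal left $\proj B$-approximation, which equals $F(a_X)$ with $a_X:X\to T^X$ because $F$ is fully faithful and $\proj B=F(\add T)$. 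Applying $G$, which is right exact, gives $\Sigma_TX\cong\operatorname{Coker}(GF a_X)=\operatorname{Coker}a_X$. \textbf{This is the main obstacle.} One has to check that $\underline G$ of the cokernel is the cokernel in $\C$; right exactness of $G$ gives this, but I must confirm that $\operatorname{Coker}a_X$ lies in $\Fac T$, which is clear. A second concern is that $\Sigma_BFX$ may fail to lie in $\Sub\D T$. This is harmless, because $\underline G$ is defined on all of $\underline{\modu}\,B$.

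\emph{Step 3: the unit criterion.} This follows Gélinas verbatim. Write $\eta_n:1\to\Omega_T^n\Sigma_T^n$ for the unit of the iterated adjunction $\Sigma_T^n\dashv\Omega_T^n$, and set $M'=\Omega_T^dM$ and $n=d+k$.

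Suppose $\eta_{n,M'}$ is split monic. Then $M'$ is a retract of $\Omega_T^{n}N$ with $N=\Sigma_T^nM'$, after choosing a representative of $N$ in $\C$. Hence $k$-$\dell_T(M)\le d$.

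Conversely, suppose $M'\xrightarrow{i}\Omega_T^nN\xrightarrow{p}M'$ composes to $1$. By adjunction, $i=\Omega_T^n(\bar i)\circ\eta_{n,M'}$ for some $\bar i:\Sigma_T^nM'\to N$. Then $p\circ\Omega_T^n(\bar i)$ is a left inverse of $\eta_{n,M'}$.
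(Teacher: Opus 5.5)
Your route is the paper's: the stable tensor--Hom adjunction $\underline G\dashv\underline F$, the Auslander--Reiten adjunction over $B$, the cokernel model via right exactness of $G$, and G\'elinas's unit argument. Your Step~3 is exactly the proof of \cref{thm:unit}. However, the first half of Step~2 rests on an invalid inference. Composing adjunctions does not give $\underline G\,\Sigma_B\,\underline F\dashv\underline G\,\Omega_B\,\underline F$. Composition needs a right adjoint of the innermost factor $\underline F$, and writing that adjoint as $\underline G$ means using $\underline F\dashv\underline G$, whereas Step~1 only gives $\underline G\dashv\underline F$.

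The reverse adjunction is false in general: it would force $\uHom_B(\underline FX,U)=0$ whenever $U\otimes_BT=0$. For instance, take $A=K(1\to2\to3)$ with $P_1$ uniserial of length three, and the tilting module $T=P_1\oplus P_2\oplus S_2$. Then $F$ sends the injective $A$-module of length two to the indecomposable $B$-module $I$ of length three. This $I$ maps stably nonzero onto the simple $B$-module $L$ with $L\otimes_BT=0$.

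What saves the argument is that $\Omega_B\underline FY\cong\underline F\Omega_TY$ lies in the essential image of $\underline F$, where full faithfulness does the work of the missing adjunction. This gives the paper's chain
\begin{align*}
\uHom_T(\underline G\Sigma_B\underline FX,Y)
&\cong\uHom_B(\Sigma_B\underline FX,\underline FY)
\cong\uHom_B(\underline FX,\Omega_B\underline FY)\\
&\cong\uHom_B(\underline FX,\underline F\Omega_TY)
\cong\uHom_T(X,\Omega_TY).
\end{align*}
All its ingredients are already in your Step~1.

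The cokernel model also uses a false claim: $F(X)$ need not be torsionless. The class $\Sub\D T$ is not contained in the submodules of projectives; for $T=A$ it is all of $\modu A$, and in the example above $\Hom_B(I,B)=0$. So the torsionless case of the Auslander--Reiten description does not apply. What you need, and what the paper uses, is the statement for arbitrary $U\in\modu B$. Dualizing a projective cover $Q\to\Hom_B(U,B)$ gives a left $\proj B$-approximation $U\to\Hom_{B^{\op}}(Q,B)$. Its cokernel represents $\Tr_{B^{\op}}\Omega_{B^{\op}}\Tr_BU$ stably, whether or not the approximation is injective. In the example the approximation is $I\to0$, so $\Sigma_T$ of the length-two injective is zero. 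Your remark that $\D T$ is injective in $\Sub\D T$ plays no role.

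With these two repairs, the rest of Step~2 matches the paper. In Step~3, add that a retraction at a level $d'\le d$ yields one at level $d$ after applying $\Omega_T^{d-d'}$. Otherwise $k$-$\dell_T(M)\le d$ does not directly supply the retraction at level exactly $d$ that your converse assumes.
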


The construction combines the stable tensor--Hom adjunction with the ordinary suspension over $B$. Full faithfulness of $\underline F$ identifies the right adjoint, and the unit characterization then follows formally. Relative torsionfreeness, in Huang's approximation framework and the Xi-transpose formulation \cite{HuangTranspose,HuangOmega,HuangSyzygy,HuangExtension,Xi,ZhangGeng}, gives a sufficient condition: if $\Sigma_T^n\Omega_T^nM$ is $k$-$T$-torsionfree, then $k$-$\dell_T(M)\le n$ (\cref{thm:higher}). Combined with the finite tests in \cite{XuZhang}, this yields \cref{cor:findim} on finitistic dimension.

The equivalence $F:\C\simeq\Sub\D T$ identifies syzygies, but a witness for ordinary delooping may lie outside $\Sub\D T$. \cref{prop:comparison} gives
\begin{equation}\label{eq:intro-comparison}
 k\text{-}\dell_BF(M)\le k\text{-}\dell_T(M)
 \le(k+1)\text{-}\dell_BF(M).
\end{equation}
The first inequality is from \cite{XuZhang}; the second uses $\Omega_B(\modu B)\subseteq\Sub\D T$. \cref{prop:comparison-equality} gives equality on the left when the ordinary canonical witness has vanishing first Tor with $T$. The following result shows that the shift on the right is essential at every index.

\begin{introtheorem}[see \cref{cor:all-index-gaps}]\label{intro:gaps}
For every pair of integers $k,d\ge1$, there exist a finite-dimensional algebra $A$, a classical tilting $A$-module $T$ with $\pd_AT=1$, and $M\in\Fac T$ such that, for $B=\End_A(T)$ and $F=\Hom_A(T,-)$,
\[
 k\text{-}\dell_BF(M)=0<d=k\text{-}\dell_T(M)
 =(k+1)\text{-}\dell_BF(M).
\]
Every simple module on either side of either algebra has infinite projective dimension, whereas
\[
 \findim A=\findim A^{\op}=d+k-1,
 \qquad \findim B=\findim B^{\op}=d+k.
\]
\end{introtheorem}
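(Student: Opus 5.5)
The plan is an explicit construction. For given $k,d\ge1$ I would take $A$ to be a self-injective-like or cyclic Nakayama-type algebra in which every simple has infinite projective dimension. The natural candidate is a Nakayama algebra whose Kupisch series forces $\findim A=d+k-1$; the paper later works with cyclic Nakayama algebras, so it is reasonable to draw the example from that family. For $T$ I would take a classical tilting module of projective dimension one, obtained by APR-type tilting at a suitable vertex, or as $T=P\oplus\tau^{-1}S$ for a simple $S$. Then $B=\End_A(T)$ is again computable as a bound quiver algebra, and $\Fac T=\{X:\Ext^1_A(T,X)=0\}$ is explicit. Concretely, I would attach to a cyclic Nakayama algebra a tail of length roughly $d+k$, arranged so that the relative syzygy $\Omega_T$ differs from $\Omega_A$ on one indecomposable.

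The proof then proceeds in four steps.
\begin{enumerate}
\item List the indecomposables of $\Fac T$ and compute $\Omega_T$ via minimal right $\add T$-approximations.
\item Choose $M$ indecomposable in $\Fac T$ such that $F(M)$ is itself a $B$-syzygy of high order. For instance, arrange that $F(M)\cong\Omega_B^{k}Y$ for some $Y\notin\Sub\D T$, so that $k$-$\dell_BF(M)=0$ directly from the definition.
\item Show $k$-$\dell_T(M)\ge d$. Using \cref{intro:suspension}, it suffices to check that $\eta_{d'+k,\Omega_T^{d'}M}$ is not split monic for $d'<d$. The key computation is $\Sigma_TX=\operatorname{Coker}a_X$, iterated along a chain of uniserials. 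I expect this to show that the relative suspensions "fall off" the tail after $d-1$ steps, so the composite lands in a projective-in-$\C$ or in a non-matching summand.
\item Obtain the upper bound $k$-$\dell_T(M)\le d$ from \eqref{eq:intro-comparison}, i.e.\ from $(k+1)$-$\dell_BF(M)\le d$, which I would verify by exhibiting a witness. Equality with $(k+1)$-$\dell_BF(M)$ then follows from the left inequality of \eqref{eq:intro-comparison} applied at index $k+1$ together with $(k+1)$-$\dell_T(M)\ge k$-$\dell_T(M)$ (monotonicity in $k$), giving $d\le (k+1)$-$\dell_BF(M)\le d$.
\end{enumerate}

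For the remaining claims, I would check that simples have infinite projective dimension by computing minimal projective resolutions, which are periodic on the cyclic part. The finitistic dimensions would be computed via the standard Nakayama formulas (Gustafson/Madsen-type). For $B$, the shift by one comes from $\pd T=1$, which gives $\findim A\le\findim B\le\findim A+1$; equality on the right would be confirmed by an explicit module of projective dimension $d+k$. The opposite algebras follow by the symmetry of the Kupisch series, or by duality.

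The main obstacle is designing the algebra so that the three numbers match at every pair $(k,d)$ simultaneously. In particular, the lower bound $k$-$\dell_T(M)\ge d$ requires ruling out \emph{all} witnesses $N$, not just the canonical one. This is exactly where the unit characterization of \cref{intro:suspension} is needed: it reduces the question to a single stable morphism per $d'$.
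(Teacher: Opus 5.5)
\textbf{The gap is the construction.} You never fix an algebra: Steps~1--3 are stated as expectations. Moreover, the classes you name cannot satisfy the statement.

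Since $\findim A=d+k-1\ge1$, a Nakayama algebra $A$ cannot be self-injective. If it is linear, the simple modules at its sink and source are each projective on one side. If it is cyclic, its Kupisch series $(c_i)$ is non-constant, because a constant series means self-injective. Together with $c_i\le c_{i+1}+1$, this forces $c_{i+1}=c_i-1$ for some $i$. Then $\rad P_i\cong P_{i+1}$ is projective and $\pd S_i\le1$. Attaching a tail creates a vertex of degree one, i.e.\ a source or a sink, whose simple module is again projective on one side. So the requirement that every simple module on either side of $A$ (and of $B$) have infinite projective dimension fails for every algebra you describe. Fine-tuning the Kupisch series cannot fix this; extra structure such as loops at every vertex is needed.

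That is the missing idea: separate the combinatorics from the infinite-dimension requirement.
\begin{enumerate}
\item The paper first works in finite global dimension, with $n=d+k-1$ and $A_n$, $T_n=S_0\oplus P_0\oplus\bigoplus_{i\ge2}P_i$, $B_n=KQ_n/J_{Q_n}^2$ as in \cref{thm:unbounded-gap}. It takes $M=E_k$, the $k$th member of the relative syzygy chain $E_1=X_n$, $E_i=S_i$ for $2\le i\le r=d+k$.
\item Your Step~2 mechanism is exactly what happens there: $F(E_k)=L_k=\Omega_{B_n}^kL_0$ with $L_0\notin\Sub\D T_n$.
\item A Krull--Schmidt argument on the two linear syzygy chains then gives $k\text{-}\dell_{T_n}(E_k)=(k+1)\text{-}\dell_{B_n}(L_k)=r-k=d$ (\cref{cor:gap-profile}). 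No unit computation is needed.
\item Only afterwards does one tensor with a nonsemisimple local algebra $R$. By \cref{thm:tensor-induction}, $R\otimes T_n$ is classical tilting of projective dimension one. Induction preserves both relative and ordinary higher delooping levels, and restriction reflects them via $\mathsf E\mathsf IM\cong M^{\oplus\dim_KR}$.
\item \cref{prop:tensor-finite-global} then gives $\findim(R\otimes H)=\findim(R\otimes H)^{\op}=\gldim H$, and infinite projective dimension for all simple modules on both sides. Here $H=A_n$ or $B_n$, of global dimensions $d+k-1$ and $d+k$.
\end{enumerate}

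\textbf{Smaller errors.}
\begin{enumerate}
\item In Step~4, the left inequality of \eqref{eq:intro-comparison} at index $k+1$ plus monotonicity only bounds $(k+1)\text{-}\dell_BF(M)$ from \emph{above}, by $(k+1)\text{-}\dell_T(M)$. The lower bound $d\le(k+1)\text{-}\dell_BF(M)$ instead comes from the \emph{right} inequality at index $k$, combined with Step~3.
\item $\pd_AT=1$ does not give $\findim A\le\findim B$. Take a tilting bimodule from a hereditary algebra to a tilted algebra of global dimension two and read it in reverse. This gives a tilting module of projective dimension one whose endomorphism algebra has smaller finitistic dimension.
\item $\findim B^{\op}$ does not follow from $\findim B$ by duality once $B$ is not Nakayama. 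In the paper both sides come for free, because $\gldim H=\gldim H^{\op}$ when finite and \cref{prop:tensor-finite-global} treats left and right modules alike.
\end{enumerate}
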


We first construct a tilting module whose endomorphism algebra is a radical-square-zero algebra on a linearly oriented quiver. Its complete higher-delooping profile records the effect of omitting one ordinary witness. Tensor induction then produces the examples of infinite global dimension. The required compatibility is proved in \cref{thm:tensor-induction}: induction preserves relative syzygy, suspension and transpose, and preserves and reflects higher delooping levels and relative torsionfreeness on induced modules. The support $\tau$-tilting construction is due to Li--Zhang \cite[Theorem~3.7]{LiZhangTensor}. For ordinary delooping, the induction inequality is in \cite[Lemma~4.1]{GuoSymmetry}; restriction provides its converse here.

Our final application concerns cyclic Nakayama algebras with local coefficients and automorphism twists. Write $\findim$ and $\Findim$ for the little and big right finitistic dimensions, and $\dell$ and $\ddell$ for ordinary and derived delooping, defined using simple modules. The coefficient extension in the next statement is defined in \cref{subsec:cyclic-uniform}.

\Needspace{8\baselineskip}
\begin{introtheorem}[see \cref{thm:cyclic-uniform} and \cref{cor:cyclic-big-derived}]\label{intro:cyclic}
Let $N(c)$ be a cyclic Nakayama algebra with at least two simple modules, let $R$ be a finite-dimensional local $K$-algebra with residue field $K$, and let $\boldsymbol\sigma$ be a tuple of coefficient automorphisms. Put $\Lambda=\Lambda_R(c;\boldsymbol\sigma)$. For $\Gamma\in\{\Lambda,\Lambda^{\op}\}$, one has
\[
 \findim\Gamma=\Findim\Gamma=\ddell\Gamma=\dell\Gamma
 =\findim N(c)<\infty.
\]
\end{introtheorem}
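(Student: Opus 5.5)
The plan is to reduce everything to the underlying Nakayama algebra $N(c)$ and then run the sandwich
\[
\dell\Gamma\le \ddell\Gamma \quad\text{(or the appropriate order)}, \qquad \findim\Gamma\le\Findim\Gamma\le\ddell\Gamma\le\dell\Gamma .
\]
Here $\Findim\le\ddell\le\dell$ is the Gélinas/Guo--Igusa bound. After that, only two inequalities remain:
\[
\dell\Gamma\le\findim N(c)\qquad\text{and}\qquad \findim\Gamma\ge\findim N(c).
\]
Together these force all quantities to be equal. The strategy treats $\Gamma=\Lambda$ and $\Gamma=\Lambda^{\op}$ simultaneously. The key observation is that $\Lambda^{\op}$ is again of the form $\Lambda_{R^{\op}}(c';\boldsymbol\sigma')$: the opposite of a cyclic Nakayama algebra is cyclic Nakayama with reversed Kupisch series, $R^{\op}$ is again local with residue field $K$, and the twists invert. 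For the classical case this is Ringel's result that $\findim N(c)=\findim N(c)^{\op}$ (equivalently, via the known symmetry of delooping for Nakayama algebras). So it suffices to treat $\Lambda$, provided I also check that $\findim N(c')=\findim N(c)$.

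For the lower bound $\findim\Lambda\ge\findim N(c)$, I pick an $N(c)$-module $X$ realizing $\findim N(c)$, with minimal projective resolution of length $m$. I then induce it, $X\mapsto X\otimes_{N(c)}\Lambda$, twisting the coefficient construction of \cref{subsec:cyclic-uniform} accordingly. The point is that $\Lambda$ should be a free $R$-module over each vertex, so the induction is exact and sends indecomposable projectives to indecomposable projectives. Minimality of the resolution survives because its differentials land in the radical, so $\pd_\Lambda(X\otimes\Lambda)=m$. If $\Lambda$ is not flat over $N(c)$ in the naive sense, I would instead use the quotient $\Lambda\to\Lambda/\rad R\cdot\Lambda\cong N(c)$ twisted, and compare syzygies directly.

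For the upper bound $\dell\Lambda\le\findim N(c)$, the plan is to compute syzygies of simple $\Lambda$-modules. Over $\Lambda$, the indecomposable projectives are uniserial-by-$R$: $e_i\Lambda$ has a filtration with layers $R$-twisted copies of simples. The first step is to show that each syzygy $\Omega^n S_i$ is obtained from the corresponding $N(c)$-syzygy by inducing along $R$. Concretely, $\Omega^n_\Lambda S_i\cong \Omega^n_{N(c)}S_i\otimes R$ up to twist for $n\ge1$, because the kernel of $e_j\Lambda\to e_i\Lambda$ is again a "Nakayama-type" module free over $R$. The second step is to invoke \cref{thm:tensor-induction}, specialized to $T=A$, in which case relative delooping equals ordinary delooping. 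That theorem says induction preserves and reflects higher delooping levels on induced modules. This transports Ringel/Sen's computation that $\dell N(c)=\findim N(c)$ on the $N(c)$ side, which holds for cyclic Nakayama algebras with at least two simples, since delooping witnesses are syzygy modules that can be induced.

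The main obstacle will be the simple modules themselves. $S_i$ over $\Lambda$ is not induced, since $S_i\otimes R$ is $R$-free, not simple. I therefore expect to work with first syzygies: $\dell$ of $S_i$ only depends on $\Omega S_i$ once $d\ge1$. For $d=0$, I handle the case separately by showing $S_i$ is a stable retract of a syzygy iff it is so over $N(c)$, through a radical-layer argument. Finally, I must check that the twist automorphisms $\boldsymbol\sigma$ do not disturb the identification of syzygies. For this I would verify that twisting by $\sigma$ is an autoequivalence commuting with $\Omega$ and preserving simples.
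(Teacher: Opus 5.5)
\textbf{There is a genuine gap, in the upper bound $\dell\Lambda\le\findim N(c)$.}

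Your lower bound is correct and is essentially the paper's argument. $\Lambda$ is projective as a left $N(c)$-module: using the crossing relations and invertibility of the $\sigma_i$, write the normal form with coefficients to the right of paths. Hence $-\otimes_{N(c)}\Lambda$ is exact, sends $e_iN(c)$ to $P_i$, and keeps minimal resolutions minimal. On path modules this is the recursion $\Omega_\Lambda M(i,\ell)\cong M(i+\ell,c_i-\ell)$ of \cref{lem:cyclic-syzygies}. This induction along $N(c)\subseteq\Lambda$ works for arbitrary twists. By contrast, \cref{thm:tensor-induction} concerns only the untwisted algebra $R\otimes H$, so it does not apply once $\boldsymbol\sigma$ is nontrivial. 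Your ``twisting autoequivalence'' does not identify $\Lambda$ with $R\otimes N(c)$.

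Your first step, $\Omega^n_\Lambda S_i\cong\Omega^n_{N(c)}U_i\otimes R$ for $n\ge1$, is false already at $n=1$. The module $\Omega_\Lambda S_i=\rad P_i$ contains the induced module $a_i\Lambda\cong M(i+1,c_i-1)$, but with quotient $\rad R$ placed at vertex $i$. That coefficient layer has no counterpart over $N(c)$. For $R=K[u]/(u^2)$, $c=(2,2)$ and trivial twists, the two modules have dimensions $3$ and $2$. Since the projectives have dimension $4$, they are not even stably isomorphic.

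There is also a structural obstruction. If $\rad R\ne0$, every simple $\Lambda$-module has infinite projective dimension (\cref{cor:cyclic-simple-infinite}). So whenever $\pd_{N(c)}U_i<\infty$, no syzygy of $S_i$ can be induced from one of $U_i$. Consequently nothing about delooping of simple $N(c)$-modules transports to $\Lambda$, and the radical-layer argument for $d=0$ has nothing to rest on.

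The missing idea is to avoid syzygies of simples entirely and use the embedding criterion \cref{lem:finite-pd-embedding}. If $S\hookrightarrow V$ with $\pd V\le d$, the Horseshoe lemma makes $\Omega^dS$ stably a $(d+1)$st syzygy. Ringel's formula supplies, for each $i$, a path module $H_i=V(t_i,\ell_i)$, equal to $U_i$ or to its injective envelope, with socle $U_i$ and $\pd_{N(c)}H_i\le\findim N(c)$. Its induction $M(t_i,\ell_i)$ has the same projective dimension. It contains $S_i$ as the line $Kz_{t_i}\,p_{t_i,\ell_i-1}$, where $0\ne z\in R$ with $z\rad R=0$, using $\sigma_j(\rad R)=\rad R$. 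This gives $\dell\Lambda\le\findim N(c)$ for all twists, with no appeal to a known value of $\dell N(c)$.

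Finally, the sides in your sandwich are wrong. The correct chain is $\findim\Gamma^{\op}\le\Findim\Gamma^{\op}\le\ddell\Gamma\le\dell\Gamma$. So $\dell\Lambda$ is pinned down by the lower bound $\findim\Lambda^{\op}\ge\findim N(c)$. That is exactly where the opposite construction and Ringel's equality $\findim N(c)^{\op}=\findim N(c)$ are needed.
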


No finite-global-dimension assumption on $N(c)$ is needed. The little finitistic-dimension equalities also follow from Xi--Xu's twisted tensor product theorem \cite[Theorem~1.5]{XiXu}. Our proof lifts path modules and uses Ringel's formula \cite{RingelNakayama} to construct finite-projective-dimension modules containing each simple module. This computes both delooping levels and exhibits modules attaining the common value. Two explicit cyclic families have finitistic dimensions $2n-2$ and $2n-3$, respectively. Together they realize every value at least four, although all simple modules have infinite projective dimension and no ordering makes the algebras standardly stratified.

The paper is structured as follows. In \cref{sec:preliminaries}, we recall the exact category and construct relative suspension. In \cref{sec:higher}, we prove the unit characterization, the comparison inequalities and the torsionfreeness criteria. \cref{sec:comparison} proves the finite-test bound, computes the comparison gaps and establishes tensor compatibility. In \cref{sec:cyclic}, we prove \cref{intro:cyclic} and calculate the two cyclic families.

\section{Preliminaries and relative suspension}\label{sec:preliminaries}

In this section, we recall the exact category associated with $T$ and construct the adjunction in \cref{intro:suspension}.

We use $\mathbb N=\{0,1,2,\ldots\}$. Multiplication in $B=\End_A(T)$ is composition. Thus $T$ is a $B$--$\Abar$-bimodule, and $F(X)$ is a right $B$-module by precomposition.

\subsection{The exact category}

To justify the reduction in \eqref{eq:setup}, choose an idempotent $e$ such that $T$ is $\tau$-tilting over $A/AeA$. Apply \cite[Proposition~2.2(c)]{AIR} over this quotient. Its quotient by the annihilator of $T$ is $\Abar$, so $T$ is $1$-tilting over $\Abar$. In particular,
\begin{equation}\label{eq:tilting}
 \pd_{\Abar}T\le1,\quad \Ext^1_{\Abar}(T,T)=0,
 \quad \C=\Fac_{\Abar}T=T^{\perp_1}_{\Abar}.
\end{equation}
There is an exact sequence $0\to\Abar\to T^0\to T^1\to0$ with $T^0,T^1\in\add T$. We equip $\C$ with the exact sequences inherited from $\modu\Abar$. Since all their terms are annihilated by $I$, these are the same sequences when viewed in $\modu A$. This observation concerns sequences with all terms in $\C$; it does not assert equality of arbitrary higher Ext-groups over $A$ and $\Abar$.

\begin{lemma}\label{lem:projectives}
The category $\C$ has enough projectives, with $\proj\C=\add T$. For every $X\in\C$, a right $\add T$-approximation gives a conflation
\begin{equation}\label{eq:relative-cover}
 0\longrightarrow\Omega_T X\longrightarrow T_X
 \xrightarrow{p_X}X\longrightarrow0.
\end{equation}
It remains exact after applying $F$.
\end{lemma}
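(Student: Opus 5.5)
We argue inside $\modu\Abar$, using \eqref{eq:tilting}: $\pd_{\Abar}T\le1$, $\Ext^1_{\Abar}(T,T)=0$ and $\C=\Fac T=T^{\perp_1}$. The plan has three steps: show $\add T$ is projective, build the approximation conflation, then check exactness under $F$.

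First, every $T'\in\add T$ is projective in $\C$. A conflation $0\to X'\to X\to X''\to0$ in $\C$ is a short exact sequence of $\Abar$-modules with all terms in $\C$. Applying $\Hom_{\Abar}(T',-)$ gives an exact sequence ending in $\Ext^1_{\Abar}(T',X')$. This group vanishes because $X'\in T^{\perp_1}$, so every deflation is surjective on $\Hom(T',-)$.

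Next, for $X\in\C$ we construct the cover. Take a $K$-basis $f_1,\dots,f_m$ of $\Hom_A(T,X)$ (finite-dimensional) and let $p_X=(f_i):T^m\to X$; more generally, any right $\add T$-approximation $p_X:T_X\to X$ will do. Since $X\in\Fac T$, some epimorphism $T^n\to X$ exists. It factors through $p_X$, so $p_X$ is surjective. Put $\Omega_TX=\ker p_X$.

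The main point is that $\Omega_TX\in\C$, so that \eqref{eq:relative-cover} is a conflation in $\C$. Apply $\Hom_{\Abar}(T,-)$ to $0\to\Omega_TX\to T_X\to X\to0$. This gives
\[
\Hom(T,T_X)\to\Hom(T,X)\to\Ext^1(T,\Omega_TX)\to\Ext^1(T,T_X)=0.
\]
The first map is surjective by the approximation property, so $\Ext^1_{\Abar}(T,\Omega_TX)=0$. Since $T^{\perp_1}=\C$, we get $\Omega_TX\in\C$. Here $\Hom_A=\Hom_{\Abar}$ on $\Abar$-modules, and all Ext computations stay over $\Abar$, as the paper cautions. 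This gives enough projectives.

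If $P\in\proj\C$, the conflation for $X=P$ splits, so $P$ is a summand of $T_P$ and lies in $\add T$. Hence $\proj\C=\add T$.

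Finally, exactness of $F$ on \eqref{eq:relative-cover} is the left-exact sequence
\[
0\to F(\Omega_TX)\to F(T_X)\to F(X)
\]
together with surjectivity of the last map. That surjectivity is exactly the approximation property of $p_X$, or equivalently the vanishing of $\Ext^1_{\Abar}(T,\Omega_TX)$ shown above.

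I expect no step to be a genuine obstacle. The only care needed is to stay in $\modu\Abar$, where $\C=T^{\perp_1}$ holds, rather than work over $A$.
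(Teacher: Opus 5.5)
Your proposal is correct and follows essentially the same route as the paper. In both, $p_X$ is surjective because $X\in\Fac T$, and the long exact sequence over $\Abar$ gives $\Ext^1_{\Abar}(T,\Omega_TX)=0$, hence $\Omega_TX\in\C$. Splitting then gives $\proj\C\subseteq\add T$, and $F$-exactness comes from the approximation property. The only difference is that you explicitly justify projectivity of $\add T$ in $\C$ via $\Ext^1_{\Abar}(T',X')=0$ for $X'\in T^{\perp_1}$, which the paper simply asserts.
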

\begin{proof}
Choose a right $\add T$-approximation $p_X:T_X\to X$. It is surjective because an epimorphism from a finite sum of copies of $T$ to $X$ factors through it. Let $L=\ker p_X$. The approximation property and $\Ext^1_{\Abar}(T,T_X)=0$ give $\Ext^1_{\Abar}(T,L)=0$. Thus $L\in\C$ by \eqref{eq:tilting}, and the sequence is $F$-exact. Objects of $\add T$ are projective in $\C$. Conversely, if $X$ is projective in $\C$, this sequence splits, so $X\in\add T$.
\end{proof}

We write $\uHom_T(X,Y)$ for morphisms modulo those factoring through $\add T$. A stable retract means a retract in $\stC$; equivalently, $X$ is a stable retract of $Y$ if $X$ is a direct summand of $Y\oplus T'$ for some $T'\in\add T$. Indeed, a retraction modulo $\add T$ can be corrected to a retraction through such a direct sum.

\begin{lemma}\label{lem:transport}
The functor $F$ induces an exact equivalence
\begin{equation}\label{eq:Y}
 F:\C\xrightarrow{\sim}\Y,
 \qquad \Y=\{U\in\modu B\mid\Tor_1^B(U,T)=0\}=\Sub\D T,
\end{equation}
with quasi-inverse the restriction of $G$. On stable categories one has
\begin{equation}\label{eq:stable-transport}
 \underline G\dashv\underline F,
 \qquad \underline F\Omega_T\cong\Omega_B\underline F,
\end{equation}
and $\underline F$ is fully faithful.
\end{lemma}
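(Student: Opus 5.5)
The plan is to deduce everything from the Brenner--Butler theorem for the $1$-tilting module $T$ over $\Abar$ (\cref{eq:tilting}), and then pass to stable categories. Since $\C=\Fac_{\Abar}T=T^{\perp_1}$, the tilting theorem gives an equivalence $\Hom_{\Abar}(T,-)\colon\C\to\Y$, where $\Y=\{U\mid\Tor_1^B(U,T)=0\}$ and the quasi-inverse is $-\otimes_BT$. Here $\Hom_{\Abar}(T,-)=F$ on $\C$, because every module in $\C$ is annihilated by $I$. The identification $\Y=\Sub\D T$ comes from the duality $\D\Tor_1^B(U,T)\cong\Ext^1_B(U,\D T)$. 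Then $\Y={}^{\perp_1}(\D T)$, and this equals $\Sub\D T$ because $\D T$ is a cotilting $B$-module: $T_B$ is tilting, since $\End_B(T)\cong\Abar$ and $\pd T_B\le1$. This part I will mostly cite.

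Next, exactness. A conflation $0\to X'\to X\to X''\to0$ in $\C$ stays exact under $F$ because $\Ext^1(T,X')=0$. Conversely, a short exact sequence in $\Y$ is sent by $G$ to an exact sequence, since $\Tor_1^B(U'',T)=0$. So both $F$ and $G|_\Y$ are exact.

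For $\underline F\Omega_T\cong\Omega_B\underline F$, I apply $F$ to \eqref{eq:relative-cover} from \cref{lem:projectives}. This gives a short exact sequence $0\to F\Omega_TX\to FT_X\to FX\to0$ with $FT_X\in\add F(T)=\proj B$. By Schanuel, $F\Omega_TX\cong\Omega_BFX$ up to projective summands, and the isomorphism is natural on stable categories. Also, $F(\add T)=\proj B$, so $F$ descends to $\underline F\colon\stC\to\underline{\modu}B$. Full faithfulness of $\underline F$ follows because $F$ is fully faithful on $\C$ and a map $f$ in $\C$ factors through $\add T$ iff $Ff$ factors through a projective $B$-module. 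For the converse direction, any projective $B$-module is $F(T')$, and $F$ is full on $\C$.

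The main step is the adjunction $\underline G\dashv\underline F$. The functor $G$ sends $\proj B$ to $\add T$, so it descends to $\underline G\colon\underline{\modu}B\to\stC$. Note that $G(U)$ lies in $\Fac T=\C$ for every $U$. I start from the ordinary adjunction $\Hom_A(U\otimes_BT,X)\cong\Hom_B(U,FX)$, natural in $U$ and $X$. It remains to check that under this bijection, maps factoring through $\add T$ correspond exactly to maps factoring through $\proj B$.

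In one direction, if $g\colon U\to FX$ factors through a projective $P$, then its adjoint factors through $G(P)\in\add T$.

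The other direction is the expected obstacle. Suppose the adjoint $f\colon GU\to X$ factors as $GU\to T'\to X$ with $T'\in\add T$. Then $g$ is the composite $U\to FGU\to FT'\to FX$, so it factors through $FT'$, which is projective. Thus the correspondence respects the ideals, and the bijection descends to stable Hom spaces, naturally. This yields $\underline G\dashv\underline F$. Since $\underline F$ is fully faithful, the counit $\underline G\underline F\to\mathrm{id}$ is an isomorphism, consistent with $GF\cong\mathrm{id}_\C$.
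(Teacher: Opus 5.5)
Your proposal is correct and follows essentially the same route as the paper. The equivalence comes from Brenner--Butler over $\Abar$, the tensor--Hom adjunction descends because factorizations through $T'\in\add T$ and through projectives $P$ correspond via $F(T')$ and $G(P)$, full faithfulness of $\underline F$ comes from $F(\add T)=\proj B$, and the syzygy compatibility comes from applying $F$ to the relative cover. (One harmless slip: $T$ is a left $B$-module, so in your justification of $\Y=\Sub\D T$ it is ${}_BT$ that is tilting and $\D T$ that is a cotilting right $B$-module.)
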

\begin{proof}
The equivalence \eqref{eq:Y} is the Brenner--Butler tilting theorem applied over $\Abar$ \cite{BrennerButler}. It is exact on the indicated subcategories. Notice that $\Y$ contains the projective $B$-modules and is closed under submodules and extensions.

For every $U\in\modu B$, a finite free presentation shows that $G(U)$ is generated by $T$, so $G(U)\in\C$. Also, $F(\add T)=\proj B$ and $G(\proj B)\subseteq\add T$. Hence both functors descend to the stable categories. Under the tensor--Hom adjunction
\[
 \Hom_A(GU,X)\cong\Hom_B(U,FX),
\]
a factorization through $T'\in\add T$ corresponds to one through the projective $F(T')$. Conversely, a factorization through a projective $P$ gives one through $G(P)\in\add T$. This proves the stable adjunction.

Full faithfulness of $F$ identifies factorizations between objects of its image through projectives with factorizations through $\add T$. Thus $\underline F$ is fully faithful. Finally, applying $F$ to \eqref{eq:relative-cover} gives a projective presentation of $F(X)$. The comparison maps for projective presentations yield the natural stable isomorphism in \eqref{eq:stable-transport}.
\end{proof}

Iterating \cref{lem:projectives} gives a proper $\add T$-resolution of each $X\in\C$, meaning that applying $F$ preserves exactness. Minimal right approximations yield a minimal projective resolution of $F(X)$ because $F:\add T\to\proj B$ is an equivalence. In particular, the relative syzygies agree with those used in \cite{XuZhang}.

\subsection{Construction of the left adjoint}

The Auslander--Reiten adjunction on $\underline{\modu}B$ is
\begin{equation}\label{eq:ordinary-adjunction}
 \Sigma_B=\Tr_{B^{\op}}\Omega_{B^{\op}}\Tr_B\dashv\Omega_B;
\end{equation}
see \cite[Corollary~3.4]{AuslanderReiten96} and \cite[Proposition~1.5]{Gelinas}.

\begin{theorem}\label{thm:adjunction}
The endofunctor $\Sigma_T=\underline G\Sigma_B\underline F$ of $\stC$ is left adjoint to $\Omega_T$. If $a_X:X\to T^X$ is a left $\add T$-approximation, then
\begin{equation}\label{eq:cokernel-model}
 \Sigma_T X\cong\operatorname{Coker}a_X\quad\text{in }\stC.
\end{equation}
\end{theorem}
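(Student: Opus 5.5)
The plan has two parts: the adjunction is formal, and the cokernel model comes from a concrete computation of $\Sigma_B$ on $F(X)$.

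\emph{The adjunction.} I compose the three adjunctions already available. For $X,Y\in\C$, \cref{lem:transport} gives $\underline G\dashv\underline F$, and \eqref{eq:ordinary-adjunction} gives $\Sigma_B\dashv\Omega_B$. Hence
\[
\uHom_T(\underline G\Sigma_B\underline F X,Y)\cong\uHom_B(\Sigma_B FX,FY)\cong\uHom_B(FX,\Omega_B FY).
\]
By \eqref{eq:stable-transport}, $\Omega_BFY\cong F\Omega_TY$. Full faithfulness of $\underline F$ then identifies the last group with $\uHom_T(X,\Omega_TY)$. Each isomorphism is natural in $X$ and $Y$, so $\Sigma_T\dashv\Omega_T$. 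One point needs checking: $\Sigma_B$ is applied to a module in $\Y$, but its output need not lie in $\Y$. This is harmless, because $\underline G$ is defined on all of $\underline{\modu}B$ and lands in $\C$.

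\emph{The cokernel model.} I compute $\Sigma_BU$ for $U=FX$. The standard description of $\Tr\Omega\Tr$ is as follows. Take a left $\proj B$-approximation $U\to P$, i.e.\ $\Hom_B(P,B)\to\Hom_B(U,B)$ is onto. Then $\Sigma_BU$ is stably the cokernel of $U\to P$, provided $U$ is torsionless. For a general $U$ one first replaces $U$ by its torsionless quotient, since $\Sigma_B$ kills the kernel of the map to the bidual. I would instead avoid this by working with Yoneda directly: by adjunction, $\Sigma_BU$ corepresents $\uHom_B(U,\Omega_B-)$.

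The cleanest route is to verify \eqref{eq:cokernel-model} directly by showing that $C=\operatorname{Coker}a_X$ corepresents $\uHom_T(X,\Omega_T-)$ on $\stC$. The steps are as follows.
\begin{enumerate}
\item Show $C\in\C$; this holds because $C$ is a quotient of $T^X$.
\item Define a natural map $\uHom_T(C,Y)\to\uHom_T(X,\Omega_TY)$. Given $f:C\to Y$, lift the composite $T^X\to C\to Y$ through $p_Y:T_Y\to Y$, which is possible because $p_Y$ is a right approximation. This produces a map $X\to\Omega_TY$ by restriction.
\item Show the map is surjective. Given $g:X\to\Omega_TY$, the composite $X\to T_Y$ factors through $a_X$ by the left approximation property. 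Hence $g$ extends to a morphism of short sequences, which induces $C\to Y$ on cokernels.
\item Show injectivity modulo $\add T$. This is a diagram chase: if $X\to\Omega_TY$ factors through $\add T$, hence through the left approximation $a_X$, then the induced $C\to Y$ factors through $T_Y$.
\end{enumerate}
By uniqueness of left adjoints, $C\cong\Sigma_TX$ in $\stC$.

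\emph{Main obstacle.} The delicate point is the well-definedness and injectivity in the corepresentability argument. The difficulty is that $a_X$ need not be injective. The sequence $X\to T^X\to C\to0$ is then not a conflation, so the usual relative-homological lifting lemmas do not apply verbatim. I would handle this by replacing $X$ with its image $X'$ in $T^X$; $X'$ lies in $\C$ because it is a quotient of $X$. I would then check that every map $X\to\Omega_TY\subseteq T_Y$ kills $\ker a_X$, since any such map factors through $a_X$. Hence $\uHom_T(X,\Omega_TY)=\uHom_T(X',\Omega_TY)$, which reduces the argument to the conflation case $0\to X'\to T^X\to C\to 0$. Throughout I would use $\Ext^1(T,-)$-vanishing on $\C$ from \eqref{eq:tilting}.
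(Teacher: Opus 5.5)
Your proposal is correct. The adjunction half is the paper's argument: you compose $\underline G\dashv\underline F$, $\Sigma_B\dashv\Omega_B$, $\underline F\Omega_T\cong\Omega_B\underline F$ and full faithfulness of $\underline F$. For the cokernel model you take a genuinely different route.

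\emph{The paper's route.} The paper stays on the $B$-side. It uses that $\Sigma_BU$ is represented by the cokernel of a left projective approximation of $U$. This holds for every $U$, so your torsionless caveat is unnecessary: $U$ and its image in $P$ have the same cokernel. Next, $F(a_X)$ is such an approximation of $F(X)$, because $F$ is fully faithful and $F(\add T)=\proj B$. Finally, right exactness of $G$ and $GF\cong\mathrm{id}$ on $\C$ turn $G(\operatorname{Coker}F(a_X))$ into $\operatorname{Coker}a_X$.

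\emph{Your route.} You show directly inside $\C$ that $C=\operatorname{Coker}a_X$ corepresents $\uHom_T(X,\Omega_T-)$, and conclude by Yoneda for fixed $X$. Your steps 2--4 go through. In step 4, suppose the restricted map equals $\beta\alpha'a_X$ with $\alpha'\colon T^X\to T'$ and $\beta\colon T'\to\Omega_TY$. Then your lift $h\colon T^X\to T_Y$ differs from $\beta\alpha'$ (followed by the inclusion into $T_Y$) by a map killing $a_X$. That difference is $k\pi$ for some $k\colon C\to T_Y$, and since $\pi$ is epic, $f=p_Yk$.

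This chase uses only that $\pi\colon T^X\to C$ is the cokernel of $a_X$; injectivity of $a_X$ is never needed. So the obstacle you single out is not real. The reduction to the image $X'$ is valid but can be dropped, and so can the appeal to $\Ext^1$-vanishing.

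\emph{What each route buys.} The paper's argument is shorter once the Auslander--Reiten description of $\Tr\Omega\Tr$ is granted. It also exhibits the composite $\underline G\Sigma_B\underline F$ explicitly as the cokernel. Yours is intrinsic to the exact category and avoids both $\Tr\Omega\Tr$ and tensor products. It uses only the right approximations $p_Y$, the left approximation $a_X$, and closure of $\Fac T$ under quotients. In effect it proves on its own that $X\mapsto\operatorname{Coker}a_X$ is left adjoint to $\Omega_T$. The first half of the theorem is then needed only to identify this adjoint with $\underline G\Sigma_B\underline F$.
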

\begin{proof}
\cref{lem:transport} and \eqref{eq:ordinary-adjunction} give natural isomorphisms
\begin{align*}
 \uHom_T(\underline G\Sigma_B\underline F X,Y)
 &\cong\uHom_B(\Sigma_B\underline F X,\underline F Y)\\
 &\cong\uHom_B(\underline F X,\Omega_B\underline F Y)\\
 &\cong\uHom_T(X,\Omega_T Y).
\end{align*}
This proves the adjunction.

For the concrete description, recall that the ordinary suspension of a module $U$ is represented by the cokernel of a left projective approximation of $U$; this follows by dualizing a projective cover of $\Hom_B(U,B)$ in the construction of \eqref{eq:ordinary-adjunction}. Since $F$ is fully faithful on $\C$ and identifies $\add T$ with $\proj B$, the map $F(a_X)$ is a left projective approximation of $F(X)$. Tensor is right exact, and the counits $GF(X)\cong X$ and $GF(T^X)\cong T^X$ identify $G(F(a_X))$ with $a_X$. Therefore $G(\operatorname{Coker}F(a_X))\cong\operatorname{Coker}a_X$, which proves \eqref{eq:cokernel-model}. Its cokernel belongs to $\Fac T$.
\end{proof}

For each $m\ge0$ we have $\Sigma_T^m\dashv\Omega_T^m$. Denote its unit and counit by
\begin{equation}\label{eq:unit-counit}
 \eta_m:\operatorname{Id}\longrightarrow\Omega_T^m\Sigma_T^m,
 \qquad \varepsilon_m:\Sigma_T^m\Omega_T^m\longrightarrow\operatorname{Id}.
\end{equation}
At $m=0$ both transformations are identities. Their components are morphisms in $\stC$.

\begin{remark}
Stable quotients by approximation subcategories occur in the one-sided triangulated framework of Beligiannis--Marmaridis \cite{BeligiannisMarmaridis}. \cref{thm:adjunction} supplies the endomorphism-algebra and cokernel models needed here. It does not assert that $\stC$ is triangulated or that $\Omega_T$ is an equivalence.
\end{remark}

\begin{example}\label{ex:quotient}
Let $e$ be an idempotent with $A/AeA\ne0$ and take $T=A/AeA$, or its basic representative. This is a support $\tau$-tilting module. Then $\C=\modu(A/AeA)$ and the relative constructions recover ordinary syzygy and suspension over $A/AeA$. In particular, taking $T=A$ recovers the classical setting.
\end{example}

\section{Higher delooping and torsionfreeness}\label{sec:higher}

In this section, we prove the unit characterization in \cref{intro:suspension}, compare relative and ordinary higher delooping, and give criteria in terms of relative torsionfreeness.

\subsection{Canonical units and comparison of delooping levels}

\begin{definition}[Xu--Zhang]\label{def:delooping}
For $M\in\C$ and $k\ge1$, define
\begin{equation}\label{eq:relative-delooping}
 k\text{-}\dell_T(M)=\inf\left\{d\ge0\ \middle|\
 \begin{array}{l}
 \Omega_T^dM\text{ is a retract of }\Omega_T^{d+k}N\text{ in }\stC\\
 \text{for some }N\in\C
 \end{array}\right\}.
\end{equation}
The infimum of the empty set is $\infty$. We write $\dell_T(M)$ when $k=1$. Ordinary $k$-$\dell_B$ uses the same definition in $\underline{\modu}B$, with witnesses in $\modu B$.
\end{definition}
This is \cite[Definition~4.4]{XuZhang}, with the nonnegative convention also used for the ordinary invariants \cite{Gelinas,GuoIgusa}.

\begin{theorem}\label{thm:unit}
Let $M\in\C$, $d\ge0$, $k\ge1$ and $X=\Omega_T^dM$. The following are equivalent:
\begin{enumerate}
\item $X$ is a retract of $\Omega_T^{d+k}N$ for some $N\in\C$;
\item $X$ is a retract of $\Omega_T^{d+k}\Sigma_T^{d+k}X$;
\item $\eta_{d+k,X}:X\to\Omega_T^{d+k}\Sigma_T^{d+k}X$ is split monic.
\end{enumerate}
\end{theorem}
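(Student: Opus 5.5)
The argument is formal and uses only the adjunction $\Sigma_T^m\dashv\Omega_T^m$ on $\stC$ from \cref{thm:adjunction}, with $m=d+k$. I would prove the cycle (3)$\Rightarrow$(2)$\Rightarrow$(1)$\Rightarrow$(3).

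\emph{(3)$\Rightarrow$(2).} A split monomorphism $X\to\Omega_T^{m}\Sigma_T^{m}X$ in $\stC$ exhibits $X$ as a retract of $\Omega_T^m\Sigma_T^mX$. Nothing more is needed here.

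\emph{(2)$\Rightarrow$(1).} Take $N$ to be any module in $\C$ representing the stable object $\Sigma_T^{m}X$. Such an $N$ exists by the cokernel model \eqref{eq:cokernel-model}, since $\operatorname{Coker}a_X\in\Fac T$, applied iteratively.

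\emph{(1)$\Rightarrow$(3).} Suppose $s:X\to\Omega_T^mN$ and $r:\Omega_T^mN\to X$ in $\stC$ satisfy $rs=1_X$. Let $s^\flat:\Sigma_T^mX\to N$ be the adjoint of $s$. Naturality of the unit gives the standard triangle identity
\[
 s=\Omega_T^m(s^\flat)\circ\eta_{m,X}.
\]
Then
\[
 r\circ\Omega_T^m(s^\flat)\circ\eta_{m,X}=rs=1_X,
\]
so $\eta_{m,X}$ is split monic with retraction $r\circ\Omega_T^m(s^\flat)$.

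The only points needing care are bookkeeping ones. First, $\Omega_T^m$ is a well-defined endofunctor of $\stC$; this follows from \cref{lem:projectives} and the stable transport \eqref{eq:stable-transport}. Second, iterates of adjunctions compose, so $\Sigma_T^m\dashv\Omega_T^m$ with unit $\eta_m$ as in \eqref{eq:unit-counit}. Third, "retract in $\stC$" agrees with the notion used in \cref{def:delooping}; this is the remark after \cref{lem:projectives}.

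I expect no genuine obstacle. The mildest subtlety is making sure the witness $N$ in (2)$\Rightarrow$(1) is an actual object of $\C$ rather than just a stable object, and that is exactly what \eqref{eq:cokernel-model} supplies.

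As a consequence, taking the least $d$ for which these conditions hold yields the unit characterization of $k$-$\dell_T(M)\le d$ stated in \cref{intro:suspension}.
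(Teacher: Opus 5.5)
Your proposal is correct and follows the paper's proof essentially verbatim: the implications (3)$\Rightarrow$(2)$\Rightarrow$(1) are immediate, and for (1)$\Rightarrow$(3) you factor $s=\Omega_T^m(s^\flat)\eta_{m,X}$ through the adjunction and take $r\circ\Omega_T^m(s^\flat)$ as the left inverse of the unit. One small point of terminology: that factorization is the description of the adjunction bijection via the unit (a consequence of naturality), not a triangle identity.
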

\begin{proof}
The implications (3)$\Rightarrow$(2)$\Rightarrow$(1) follow from the definitions. To prove (1)$\Rightarrow$(3), choose $u:X\to\Omega_T^{d+k}N$ and $v:\Omega_T^{d+k}N\to X$ with $vu=1_X$. Let $\widetilde u:\Sigma_T^{d+k}X\to N$ be the morphism adjoint to $u$. Then
\[
 u=\Omega_T^{d+k}(\widetilde u)\eta_{d+k,X}.
\]
Consequently $v\Omega_T^{d+k}(\widetilde u)$ is a left inverse of the unit.
\end{proof}

The argument applies to any adjunction and extends \cite[Theorem~1.10]{Gelinas}. Applying further syzygies to a retraction shows that delooping at level $d$ implies delooping at every higher level. Thus
\begin{equation}\label{eq:unit-invariant}
 k\text{-}\dell_T(M)=
 \inf\{d\ge0\mid\eta_{d+k,\Omega_T^dM}\text{ is split monic}\}.
\end{equation}
When $k$-$\dell_T(M)\le d$, one may choose the witness
\begin{equation}\label{eq:canonical-witness}
 N=\Sigma_T^{d+k}\Omega_T^dM
\end{equation}
up to stable isomorphism. The split retraction itself need not be unique.

\begin{proposition}\label{prop:comparison}
For $M\in\C$ and $k\ge1$,
\begin{equation}\label{eq:comparison}
 k\text{-}\dell_BF(M)\le k\text{-}\dell_T(M)
 \le(k+1)\text{-}\dell_BF(M).
\end{equation}
\end{proposition}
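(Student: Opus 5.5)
We prove the two inequalities separately. Both rest on \cref{lem:transport}: $\underline F$ is fully faithful and $\underline F\Omega_T\cong\Omega_B\underline F$, so $\underline F\Omega_T^m\cong\Omega_B^m\underline F$ for all $m\ge0$.

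\emph{Left inequality.} Suppose $k$-$\dell_T(M)=d<\infty$. Then $\Omega_T^dM$ is a retract of $\Omega_T^{d+k}N$ in $\stC$ for some $N\in\C$. Applying the additive functor $\underline F$ preserves the retraction. Commuting $\underline F$ with syzygies shows that $\Omega_B^dF(M)$ is a retract of $\Omega_B^{d+k}F(N)$ in $\underline{\modu}B$, with witness $F(N)\in\modu B$. Hence $k$-$\dell_BF(M)\le d$. (This is the inequality already noted from \cite{XuZhang}.)

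\emph{Right inequality.} Suppose $(k+1)$-$\dell_BF(M)=d<\infty$. Then there is $U\in\modu B$ such that $\Omega_B^dF(M)$ is a retract of $\Omega_B^{d+k+1}U=\Omega_B^{d+k}(\Omega_BU)$ in $\underline{\modu}B$.

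The key point is that $\Omega_BU\in\Sub\D T=\Y$. Indeed, $\Omega_BU$ is a submodule of a projective, projectives lie in $\Y$, and $\Y$ is closed under submodules (\cref{lem:transport}). By \eqref{eq:Y} we may therefore write $\Omega_BU\cong F(N)$ with $N=G(\Omega_BU)\in\C$. Then
\[
 \Omega_B^{d+k}\Omega_BU\cong\Omega_B^{d+k}F(N)\cong\underline F\,\Omega_T^{d+k}N
 \quad\text{and}\quad
 \Omega_B^dF(M)\cong\underline F\,\Omega_T^dM
\]
in $\underline{\modu}B$.

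Since $\underline F$ is fully faithful, the split monomorphism and its retraction between $\underline F\Omega_T^dM$ and $\underline F\Omega_T^{d+k}N$ lift uniquely to morphisms in $\stC$, and their composite is still the identity. So $\Omega_T^dM$ is a retract of $\Omega_T^{d+k}N$, giving $k$-$\dell_T(M)\le d$. The infinite cases are trivial.

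The only delicate step is this transport of the stable retraction back along $\underline F$. The objects are isomorphic to images of $\underline F$ only up to stable isomorphism, so one must transport the morphisms through these isomorphisms before applying fullness and faithfulness. This is routine once the isomorphisms are fixed. Everything else is formal from \cref{lem:transport}.
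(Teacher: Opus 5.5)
Your proposal is correct and follows the paper's proof essentially step for step. The left inequality comes from applying $\underline F$ to a relative retraction and commuting it with syzygies; the right inequality comes from observing that $\Omega_BU\in\Y$, writing $\Omega_BU\cong F(N)$ with $N\in\C$, and lifting the stable retraction through the fully faithful functor $\underline F$ after transporting it along the stable isomorphisms $\Omega_B^mF(-)\cong\underline F\,\Omega_T^m(-)$.
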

\begin{proof}
Applying $\underline F$ to a relative delooping retraction gives the first inequality by \eqref{eq:stable-transport}.

For the second, suppose that $(k+1)$-$\dell_BF(M)=d<\infty$. Choose a stable retraction of $\Omega_B^dF(M)$ from $\Omega_B^{d+k+1}U$ with $U\in\modu B$. Since $\Y$ contains projectives and is closed under submodules, $\Omega_BU\in\Y$. Choose $N\in\C$ with $F(N)\cong\Omega_BU$. The two terms of this retraction are stably isomorphic to
\[
 F(\Omega_T^dM)\quad\text{and}\quad F(\Omega_T^{d+k}N),
\]
respectively. Full faithfulness of $\underline F$ lifts the retraction to $\stC$. Hence $k$-$\dell_T(M)\le d$. The assertion is immediate if the right-hand side is infinite.
\end{proof}

The first inequality is the comparison used in \cite[Theorem~4.5]{XuZhang}. The second uses the fact that every first $B$-syzygy lies in $\Y$. \cref{thm:unbounded-gap} shows that the first inequality can have an arbitrarily large gap, even when the second is an equality.

\begin{proposition}[An equality criterion]\label{prop:comparison-equality}
Let $M\in\C$, $k\ge1$ and $d\ge0$. Suppose that
$k$-$\dell_BF(M)\le d$, and choose a representative
\[
 W=\Sigma_B^{d+k}\Omega_B^dF(M)
\]
of the ordinary canonical witness. If $\Tor_1^B(W,T)=0$, then
$k$-$\dell_T(M)\le d$. In particular, if
$d=k$-$\dell_BF(M)<\infty$, this condition gives
$k$-$\dell_T(M)=d$.

If $\Sigma_B(\underline\Y)\subseteq\underline\Y$, where
$\underline\Y$ denotes the full stable image of $\Y$, then
\[
 k\text{-}\dell_T(M)=k\text{-}\dell_BF(M)
 \qquad(M\in\C,\ k\ge1),
\]
including infinite values.
\end{proposition}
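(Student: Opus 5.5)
Since $k$-$\dell_BF(M)\le d$, \cref{thm:unit} applied with $T=B$ (the classical case, via \cref{ex:quotient}) shows that $\Omega_B^dF(M)$ is a stable retract of $\Omega_B^{d+k}W$, where $W=\Sigma_B^{d+k}\Omega_B^dF(M)$. The hypothesis $\Tor_1^B(W,T)=0$ says exactly that $W\in\Y$. By \cref{lem:transport} we can therefore choose $N\in\C$ with $F(N)\cong W$. Iterating $\underline F\Omega_T\cong\Omega_B\underline F$ from \eqref{eq:stable-transport} gives stable isomorphisms
\[
\Omega_B^{d+k}W\cong F(\Omega_T^{d+k}N),\qquad \Omega_B^dF(M)\cong F(\Omega_T^dM).
\]
So the retraction takes place between objects in the stable image of $\underline F$. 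Because $\underline F$ is fully faithful, both the section and the retraction lift to $\stC$, and their composite is still the identity. Hence $\Omega_T^dM$ is a retract of $\Omega_T^{d+k}N$, and $k$-$\dell_T(M)\le d$. If in addition $d=k$-$\dell_BF(M)$, the reverse inequality is the first inequality in \cref{prop:comparison}, which gives equality.

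For the second assertion, suppose $\Sigma_B(\underline\Y)\subseteq\underline\Y$. Let $d=k$-$\dell_BF(M)$ and assume first that $d<\infty$. Every syzygy lies in $\Y$ up to projective summands, since $\Y$ contains the projectives and is closed under submodules; this also holds for $d=0$, because $F(M)\in\Y$. So $\Omega_B^dF(M)$ lies in $\underline\Y$, and applying the hypothesis $d+k$ times puts $W$ in $\underline\Y$.

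The main point to check is that membership of $W$ in $\underline\Y$ gives $\Tor_1^B(W,T)=0$ for the chosen representative. A stable isomorphism $W\cong Y$ with $Y\in\Y$ means $W\oplus P\cong Y\oplus Q$ for projectives $P,Q$. Since $\Y$ is closed under direct summands (it is defined by vanishing of $\Tor_1$), it follows that $W\in\Y$. Alternatively, one can bypass the representative and run the lifting argument above with any stably isomorphic object of $\Y$. Either way the first part applies and gives $k$-$\dell_T(M)=d$.

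Finally, if $k$-$\dell_BF(M)=\infty$, the first inequality of \cref{prop:comparison} forces $k$-$\dell_T(M)=\infty$. This covers the infinite values and completes the equality for all $M\in\C$ and $k\ge1$.
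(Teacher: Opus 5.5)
Your proposal is correct and follows essentially the same route as the paper. Both arguments use the ordinary unit characterization of \cref{thm:unit}, read $\Tor_1^B(W,T)=0$ as $W\in\Y$, and lift the retraction through the fully faithful functor $\underline F$; the second assertion then uses closure of $\underline\Y$ under syzygies and $\Sigma_B$, together with the first inequality of \cref{prop:comparison} for infinite values. Your direct-summand argument for representative independence simply expands the paper's remark that positive Tor vanishes on projective summands.
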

\begin{proof}
The ordinary version of \cref{thm:unit} makes
$\Omega_B^dF(M)$ a stable retract of $\Omega_B^{d+k}W$.
The Tor condition says precisely that $W\in\Y$.
Writing $W\cong F(N)$ and using full faithfulness of
$\underline F$ lifts this retraction to $\stC$.
\cref{prop:comparison} gives equality when $d$ is the
ordinary level. The Tor condition is independent of the chosen
representative, since positive Tor vanishes on projective summands.

For the last assertion, $\Omega_B^dF(M)$ belongs to $\Y$ because
$\Y$ is closed under syzygies. Closure under $\Sigma_B$ in the
stable category puts the displayed witness in $\underline\Y$,
so its first Tor-group vanishes. Apply the first assertion when
the ordinary level is finite; when it is infinite, use the first
inequality of \cref{prop:comparison}.
\end{proof}

\subsection{Relative transpose and evaluation}

Choose a minimal proper $\add T$-presentation
\begin{equation}\label{eq:proper-presentation}
 T_1\xrightarrow{d}T_0\longrightarrow X\longrightarrow0
\end{equation}
for $X\in\C$, and put
\begin{equation}\label{eq:transpose}
 \Tr_T X=\operatorname{Coker}\bigl(\Hom_A(T_0,T)
 \xrightarrow{\Hom_A(d,T)}\Hom_A(T_1,T)\bigr).
\end{equation}
This is Xi's relative transpose \cite{Xi}, a left $B$-module. Here minimality refers to the successive right $\add T$-approximations. Minimal proper presentations are unique up to isomorphism; arbitrary proper presentations give the same transpose up to projective summands.

\begin{proposition}\label{prop:transpose}
With minimal presentations, $\Tr_T X\cong\Tr_BF(X)$. In particular,
\begin{equation}\label{eq:relative-TrOmegaTr}
 \Sigma_T X\cong
 \bigl(\Tr_{B^{\op}}\Omega_{B^{\op}}\Tr_T X\bigr)\otimes_BT
 \quad\text{in }\stC.
\end{equation}
\end{proposition}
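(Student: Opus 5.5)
The plan is to transport the minimal proper presentation \eqref{eq:proper-presentation} along $F$ and compare the two transposes directly.

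\textbf{Step 1: $F$ carries the presentation to a minimal projective presentation.} Applying $F$ to \eqref{eq:proper-presentation} gives
\[
 F(T_1)\xrightarrow{F(d)}F(T_0)\longrightarrow F(X)\longrightarrow0 .
\]
By \cref{lem:projectives}, applied to $X$ and then to $\Omega_TX$, this sequence is exact. Its terms lie in $F(\add T)=\proj B$. The remark after \cref{lem:transport} says that minimal right approximations go to projective covers. Hence this is a minimal projective presentation of $F(X)$.

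\textbf{Step 2: identify the two Hom-duals.} By definition,
\[
 \Tr_BF(X)=\operatorname{Coker}\bigl(\Hom_B(F(T_0),B)\to\Hom_B(F(T_1),B)\bigr).
\]
For $T'\in\add T$ there is a natural isomorphism of left $B$-modules
\[
 \Hom_A(T',T)\cong\Hom_B(F(T'),F(T))=\Hom_B(F(T'),B).
\]
It comes from full faithfulness of $F$ on $\add T$ and from $F(T)=\End_A(T)=B$ as a $B$-bimodule. It is natural in $T'$, so it intertwines $\Hom_A(d,T)$ with $\Hom_B(F(d),B)$. Taking cokernels gives $\Tr_TX\cong\Tr_BF(X)$.

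The one point that needs care is that the left $B$-module structures match. On $\Hom_A(T',T)$, $B$ acts by postcomposition. On $\Hom_B(F(T'),B)$, $B$ acts by left multiplication in $B$, which is again postcomposition. So the check is routine.

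Since any two minimal presentations are isomorphic, the isomorphism is well defined up to isomorphism. With non-minimal presentations both sides change only by projective summands.

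\textbf{Step 3: the formula \eqref{eq:relative-TrOmegaTr}.} By \cref{thm:adjunction},
\[
 \Sigma_TX=\underline G\,\Sigma_B\underline F X=\bigl(\Tr_{B^{\op}}\Omega_{B^{\op}}\Tr_BF(X)\bigr)\otimes_BT .
\]
Substituting $\Tr_BF(X)\cong\Tr_TX$ from Step 2 gives the formula. We also need that the composite $\Tr_{B^{\op}}\Omega_{B^{\op}}$ is well defined on the stable category of left modules, since $\Tr_TX$ is only determined up to projective summands. Finally, $G$ preserves stable isomorphism by \cref{lem:transport}, because it sends projectives into $\add T$.

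The expected obstacle is mild. It is the bookkeeping of minimality in Step 1, together with the exactness of $F(T_1)\to F(T_0)\to F(X)\to 0$ at $F(T_0)$. That exactness needs $T_1\to\Omega_TX$ to be a right approximation, so that $F(T_1)\to F(\Omega_TX)$ is surjective. This follows by iterating \cref{lem:projectives}, as noted after \cref{lem:transport}.
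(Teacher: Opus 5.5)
Your proposal is correct and follows the paper's own proof: apply $F$ to the minimal proper presentation to get a minimal projective presentation of $F(X)$, use the natural identifications $\Hom_A(T_i,T)\cong\Hom_B(F(T_i),B)$ to match the cokernels defining the two transposes, and then substitute into \cref{thm:adjunction}. You also spell out details the paper leaves implicit, namely exactness at $F(T_0)$ via the approximation property of $T_1\to\Omega_TX$ and the agreement of the left $B$-module structures.
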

\begin{proof}
Applying $F$ to \eqref{eq:proper-presentation} gives a minimal projective presentation of $F(X)$. The natural identifications
\[
 \Hom_B(F(T_i),B)\cong\Hom_A(T_i,T)
\]
identify the two cokernels defining the transposes. The formula for suspension now follows from \cref{thm:adjunction}. This also proves the comparison in \cite[Corollary~3.6]{Xi} in the present setting.
\end{proof}

For $X\in\C$, write $X^*=\Hom_{\Abar}(X,T)$, a left $B$-module. For a left $B$-module $V$, write $V^\vee=\Hom_{B^{\op}}(V,{}_BT)$, a right $\Abar$-module. Let $\delta_X:X\to(X^*)^\vee$ be evaluation. These conventions fix the sides in all the following Ext-groups.

\begin{lemma}\label{lem:evaluation}
There is an exact sequence of right $\Abar$-modules
\begin{equation}\label{eq:evaluation}
\begin{split}
0\longrightarrow\Ext^1_{B^{\op}}(\Tr_T X,{}_BT)
\longrightarrow X\xrightarrow{\delta_X}(X^*)^\vee
\longrightarrow\Ext^2_{B^{\op}}(\Tr_T X,{}_BT)\longrightarrow0.
\end{split}
\end{equation}
For $q\ge1$ one also has
\begin{equation}\label{eq:double-shift}
 \Ext^{q+2}_{B^{\op}}(\Tr_T X,{}_BT)
 \cong\Ext^q_{B^{\op}}(X^*,{}_BT).
\end{equation}
\end{lemma}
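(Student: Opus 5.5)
The plan is to imitate the classical Auslander--Bridger sequence, with ${}_BT$ in place of the regular module. Start from the minimal proper presentation \eqref{eq:proper-presentation} $T_1\xrightarrow{d}T_0\to X\to0$. Apply $(-)^*=\Hom_{\Abar}(-,T)$. Since $\Hom_{\Abar}(-,T)$ is left exact, we get an exact sequence of left $B$-modules
\[
 0\to X^*\to T_0^*\xrightarrow{d^*}T_1^*\to\Tr_T X\to0 .
\]
Here each $T_i^*$ is a projective left $B$-module, because $\Hom_A(T',T)\in\add {}_BB$ for $T'\in\add T$. This is a projective presentation of $\Tr_TX$ extended by its kernel $X^*$.

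Next apply $(-)^\vee=\Hom_{B^{\op}}(-,{}_BT)$. On $\add T$ the composite $(-)^{*\vee}$ is naturally the identity, since $\Hom_{B^{\op}}(\Hom_A(T,T),T)\cong T$ and this is additive. So the complex $T_1^{*\vee}\to T_0^{*\vee}$ is canonically $T_1\xrightarrow{d}T_0$, and $\delta$ is compatible with these identifications.

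Write $\Omega=\operatorname{Im}d^*=\Omega_{B^{\op}}\Tr_TX$. There are two short exact sequences:
\[
 0\to X^*\to T_0^*\to\Omega\to0,\qquad 0\to\Omega\to T_1^*\to\Tr_TX\to0 .
\]
Apply $(-)^\vee$ to both and use $\Ext^{\ge1}_{B^{\op}}(T_i^*,{}_BT)=0$, since $T_i^*$ is projective. This gives:
\begin{enumerate}
\item $\Ext^1(\Tr_TX,T)=\operatorname{Coker}(T_1\to\Omega^\vee)$;
\item $\Ext^{q+1}(\Tr_TX,T)\cong\Ext^q(\Omega,T)$ for $q\ge1$;
\item $\Ext^1(\Omega,T)=\operatorname{Coker}(T_0\to X^{*\vee})$;
\item $\Ext^{q+1}(\Omega,T)\cong\Ext^q(X^*,T)$ for $q\ge1$.
\end{enumerate}
Combining (2) and (4) yields \eqref{eq:double-shift}. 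Combining (2) with $q=1$ and (3) gives $\Ext^2(\Tr_TX,T)\cong\operatorname{Coker}(T_0\to X^{*\vee})$.

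For \eqref{eq:evaluation}, use the left exactness of $(-)^\vee$ on the first short exact sequence. This makes $\Omega^\vee\to T_0$ injective with image $\ker(T_0\to X^{*\vee})$. Now $T_0\to X^{*\vee}$ factors as $T_0\to X\xrightarrow{\delta_X}X^{*\vee}$. So $\operatorname{Coker}\delta_X\cong\operatorname{Coker}(T_0\to X^{*\vee})$, because $T_0\to X$ is onto. This is $\Ext^2(\Tr_TX,T)$.

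For the kernel, $\ker\delta_X$ is the image in $X$ of $\Omega^\vee\subseteq T_0$. Its kernel inside $\Omega^\vee$ is $\Omega^\vee\cap\operatorname{Im}d$. The map $T_1\to T_0$ is $d$, and it factors as $T_1\to\Omega^\vee\hookrightarrow T_0$. Hence $\operatorname{Im}d\subseteq\Omega^\vee$, and therefore $\ker\delta_X\cong\Omega^\vee/\operatorname{Im}(T_1)=\Ext^1(\Tr_TX,T)$ by (1).

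The main obstacle is the naturality bookkeeping. First, one must check that the identification $T_i^{*\vee}\cong T_i$ turns $d^{*\vee}$ into $d$ and $T_0\to X^{*\vee}$ into $\delta_X$ composed with the cover. I expect this to follow from naturality of evaluation, $\delta_{T_0}$ being an isomorphism. Second, sides must be tracked carefully: all modules above are right $\Abar$-modules, and everything stays inside $\Abar$-modules, so the reduction remark after \eqref{eq:tilting} causes no issue. Minimality of the presentation is not needed beyond its use in defining $\Tr_TX$.
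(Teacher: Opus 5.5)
Your proposal is correct and follows the same route as the paper. You dualize the proper presentation to get $0\to X^*\to T_0^*\to T_1^*\to\Tr_TX\to0$, split it at the image, apply $(-)^\vee$ to both short exact sequences, identify $(T_i^*)^\vee\cong T_i$ via evaluation, and then compare cokernels and dimension-shift. Your explicit treatment of $\ker\delta_X$ and $\operatorname{Coker}\delta_X$, using the factorization of $d$ through $\Omega^\vee$ and naturality of evaluation, spells out details the paper leaves implicit.
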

\begin{proof}
Dualizing \eqref{eq:proper-presentation} gives
\[
 0\longrightarrow X^*\longrightarrow T_0^*
 \longrightarrow T_1^*\longrightarrow\Tr_T X\longrightarrow0,
\]
where $T_i^*$ is projective over $B^{\op}$. Split this sequence at the image of $T_0^*\to T_1^*$ and apply $(-)^\vee$ to the two short exact sequences. The evaluation maps identify $(T_i^*)^\vee$ with $T_i$, and the composite $T_1\to T_0$ is $d$. Comparing cokernels gives \eqref{eq:evaluation}, with the middle map equal to evaluation by naturality. This is the evaluation sequence of \cite[Theorem~3.9]{Xi}. Dimension shifting along the displayed sequence gives \eqref{eq:double-shift}.
\end{proof}

\begin{definition}\label{def:torsionfree}
An object $X\in\C$ is \emph{$T$-torsionless} if it embeds into an object of $\add T$. For $k\ge1$, it is \emph{$k$-$T$-torsionfree} if
\begin{equation}\label{eq:torsionfree}
 \Ext^i_{B^{\op}}(\Tr_T X,{}_BT)=0\qquad(1\le i\le k).
\end{equation}
\end{definition}
We use the relative transpose formulation of \cite{ZhangGeng}, with the change of sides required by the right-module convention. The ensuing approximation characterization places these conditions in Huang's framework \cite{HuangOmega}.

\begin{lemma}\label{lem:stable-invariance}
The condition of being $k$-$T$-torsionfree is invariant under isomorphism in $\stC$.
\end{lemma}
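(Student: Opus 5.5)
Two objects $X,X'$ of $\C$ are isomorphic in $\stC$ exactly when there are $T',T''\in\add T$ with $X\oplus T'\cong X'\oplus T''$ in $\C$. This follows from the description of stable retracts after \cref{lem:projectives}, applied in both directions, together with Krull--Schmidt in $\modu\Abar$. So it suffices to prove two things: the condition \eqref{eq:torsionfree} is additive over finite direct sums, and it holds for every object of $\add T$.

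For additivity, I would use that a direct sum of minimal proper presentations is a proper presentation of the sum. By the remark after \eqref{eq:transpose}, it computes $\Tr_T(X\oplus Y)$ up to projective summands. Hence
\[
 \Tr_T(X\oplus Y)\oplus P\cong\Tr_TX\oplus\Tr_TY\oplus Q
\]
for projective left $B$-modules $P,Q$. Positive Ext from a projective vanishes, and $\Ext^i_{B^{\op}}(-,{}_BT)$ is additive. Therefore the vanishing for $1\le i\le k$ holds for $X\oplus Y$ if and only if it holds for both $X$ and $Y$.

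For $X=T'\in\add T$, the minimal proper presentation is $0\to T'\xrightarrow{1}T'$. Its transpose is the cokernel of $\Hom_A(T',T)\to0$, which is zero. So $\Tr_TT'=0$, and every Ext-group vanishes.

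Combining the two steps: if $X\oplus T'\cong X'\oplus T''$, then $X$ is $k$-$T$-torsionfree if and only if $X\oplus T'$ is, if and only if $X'\oplus T''$ is, if and only if $X'$ is.

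As an alternative route, one could invoke \cref{prop:transpose}. It gives $\Tr_TX\cong\Tr_BF(X)$, and the ordinary transpose is well defined on $\underline{\modu}B$ up to projective summands. Since $\underline F$ is fully faithful, stably isomorphic objects of $\C$ have stably isomorphic images under $F$. Their transposes then agree up to projective summands, and the same Ext argument applies.

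The only delicate point is the first step: stable isomorphism must be upgraded to an actual isomorphism after adding $\add T$-summands. I would do this by composing the two retraction descriptions and cancelling common summands using Krull--Schmidt.
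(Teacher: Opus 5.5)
Your argument is correct, and your main route differs from the paper's. Your ``alternative route'' is essentially the paper's proof.

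The paper transports the stable isomorphism along $\underline F$. It then uses the Auslander--Bridger fact that $\Tr_B$ is a duality between projectively stable categories, together with \cref{prop:transpose}. This shows that $\Tr_TX$ and $\Tr_TX'$ are isomorphic in the stable category of left $B$-modules, and for $i\ge1$ the functor $\Ext^i_{B^{\op}}(-,{}_BT)$ factors through that category.

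Your main argument never leaves $\C$. You replace the stable isomorphism by a genuine isomorphism $X\oplus T'\cong X'\oplus T''$. After that you need only two facts: $\Tr_T$ is additive up to projective summands, and $\Tr_T$ vanishes on $\add T$.

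The first step is sound, and the cleanest way to write it is as follows. Split off the maximal $\add T$-summands $X=X_0\oplus X_1$ and $X'=X_0'\oplus X_1'$. The two stable retractions make $X$ a summand of $X'\oplus T_1$ and $X'$ a summand of $X\oplus T_2$. By Krull--Schmidt in $\C$, which is closed under summands in $\modu\Abar$, every indecomposable not in $\add T$ then occurs with the same multiplicity in $X$ and in $X'$. Hence $X_0\cong X_0'$.

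Your route is more elementary: it needs no Auslander--Bridger duality and no passage to $B$, but it relies on Krull--Schmidt cancellation. The paper's route is shorter given the cited duality. It also shows that the relative transpose is well defined as a functor from $\stC$ to the stable category of left $B$-modules, so the Ext groups are naturally stable invariants, not merely isomorphic objectwise.

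A minor point on your alternative route: full faithfulness of $\underline F$ is not needed there, since any functor preserves isomorphisms.
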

\begin{proof}
Stable isomorphisms in $\stC$ give stable isomorphisms under $F$. Ordinary transpose is a duality between the projectively stable categories on the two sides \cite{AuslanderBridger}. Thus \cref{prop:transpose} gives a stable isomorphism of the relative transposes. Positive Ext-groups out of projectives vanish, so \eqref{eq:torsionfree} is unchanged.
\end{proof}

\begin{samepage}
\begin{proposition}\label{prop:torsionless}
For $X\in\C$, the following conditions are equivalent:
\begin{enumerate}
\item $X$ is $T$-torsionless;
\item $\Ext^1_{B^{\op}}(\Tr_T X,{}_BT)=0$;
\item $\uHom_T(X,\D\Abar)=0$.
\end{enumerate}
If these conditions hold, then $X\cong\Omega_TY$ in $\stC$ for some $Y\in\C$.
\end{proposition}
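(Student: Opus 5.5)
The plan is to prove (1)$\Leftrightarrow$(2) directly from the evaluation sequence of \cref{lem:evaluation}, and (1)$\Leftrightarrow$(3) through the tilting data \eqref{eq:tilting}. The final assertion will come out of the embedding produced in (1).

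For (2)$\Rightarrow$(1), \eqref{eq:evaluation} makes $\delta_X:X\to(X^*)^\vee$ injective. Choose an epimorphism $T_0^*\to X^*$ from a projective left $B$-module, for instance $T_0^*$ with $T_0\to X$ the right approximation, whose dual is onto $X^*$ by the dualized presentation. Applying $(-)^\vee$ embeds $(X^*)^\vee$ into $(T_0^*)^\vee\cong T_0\in\add T$, so $X$ is $T$-torsionless. For (1)$\Rightarrow$(2), let $j:X\hookrightarrow T'$ with $T'\in\add T$. Then $\delta_{T'}$ is an isomorphism, and naturality gives $\delta_{T'}j=(j^{*})^\vee\delta_X$. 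Hence $\delta_X$ is injective and $\Ext^1_{B^{\op}}(\Tr_TX,{}_BT)=\ker\delta_X=0$.

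For (1)$\Rightarrow$(3) and the final claim, I would take a left $\add T$-approximation $a_X:X\to T^X$. Since $X$ embeds in some $T'$ and that embedding factors through $a_X$, the map $a_X$ is injective. Its cokernel $Y$ lies in $\Fac T=\C$, so $0\to X\to T^X\to Y\to0$ has all terms in $\C$. I must also check that this sequence is a conflation that is $F$-exact. Right exactness is clear. Exactness after $F$ needs $\Ext^1_{\Abar}(T,X)=0$, which holds since $X\in\C=T^{\perp_1}$. Hence $\Hom(T,T^X)\to\Hom(T,Y)$ is onto, so $T^X\to Y$ is a right $\add T$-approximation. Comparing with \eqref{eq:relative-cover} via Schanuel then gives $X\cong\Omega_TY$ in $\stC$.

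For (3), consider $f:X\to\D\Abar$ and extend it along the injection $X\to T^X$. The obstruction lies in $\Ext^1_{\Abar}(Y,\D\Abar)=0$, since $\D\Abar$ is injective over $\Abar$. So $f$ extends to $T^X$, and therefore $\uHom_T(X,\D\Abar)=0$; for this I only need the factorization and not that $\D\Abar\in\C$. One point needs care: $\D\Abar$ must lie in $\C$ for $\uHom_T$ to make sense. This holds because $\D\Abar$ is injective over $\Abar$, so it satisfies $\Ext^1(T,\D\Abar)=0$.

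The main obstacle is (3)$\Rightarrow$(1). I would use that $\D\Abar$ is an injective cogenerator of $\modu\Abar$, so $X$ embeds into $(\D\Abar)^n$ via some $\iota$. By (3), $\iota$ factors as $X\xrightarrow{g}T''\xrightarrow{h}(\D\Abar)^n$ with $T''\in\add T$. Since $\iota=hg$ is injective, $g$ is injective, which gives (1). This direction may in fact be easy, so the real care goes into verifying that the left approximation sequence is a proper conflation, as done above.
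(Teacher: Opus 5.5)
Your proof is correct and follows the paper's argument throughout. The shared steps are: the evaluation sequence plus naturality for (1)$\Leftrightarrow$(2); injectivity of $\D\Abar$ and an embedding of $X$ into $\add(\D\Abar)$ for (1)$\Leftrightarrow$(3); and the cokernel of an embedding into $\add T$ for the final claim. In that last step any embedding would do in place of your left approximation.

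One slip in (2)$\Rightarrow$(1): the dual of the right approximation $T_0\to X$ is the monomorphism $X^*\to T_0^*$ from the dualized presentation, not an epimorphism onto $X^*$. Your ``for instance'' should instead use a projective cover of $X^*$, or the dual $(T^X)^*\to X^*$ of a left $\add T$-approximation $a_X:X\to T^X$, which is onto.
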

\end{samepage}
\begin{proof}
By \eqref{eq:evaluation}, (2) is equivalent to injectivity of $\delta_X$. A finite free cover of $X^*$ dualizes to an embedding of $(X^*)^\vee$ into a finite sum of copies of $T$. Conversely, an embedding $X\to T'$ with $T'\in\add T$ and naturality of evaluation imply that $\delta_X$ is injective. Hence (1) and (2) are equivalent.

The module $\D\Abar$ is injective over $\Abar$ and belongs to $\C$ by \eqref{eq:tilting}. If $X$ embeds into $T'\in\add T$, every map $X\to\D\Abar$ extends to $T'$, proving (3). Conversely, embed $X$ into an injective module $E\in\add(\D\Abar)$. Under (3), this embedding factors through an object of $\add T$. Its first factor is injective, proving (1).

Finally, the cokernel $Y$ of an embedding $X\to T'$ lies in $\Fac T$. Thus $0\to X\to T'\to Y\to0$ is a conflation with projective middle term, and $X$ represents $\Omega_TY$ in $\stC$.
\end{proof}

\begin{theorem}[Stable Hom criterion]\label{thm:Hom}
For $M\in\C$ and $n\ge0$, the condition
\begin{equation}\label{eq:Hom-condition}
 \uHom_T(\Omega_T^nM,\Omega_T^n\D\Abar)=0
\end{equation}
implies $\dell_T(M)\le n$.
\end{theorem}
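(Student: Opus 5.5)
Set $X=\Omega_T^nM$. We need $X$ to be a retract of $\Omega_T^{n+1}N$ in $\stC$ for some $N\in\C$. By \cref{thm:unit} with $d=n$, $k=1$, it is enough to produce any such $N$. The natural strategy is to show that $X$ is $T$-torsionless, which by \cref{prop:torsionless} gives $X\cong\Omega_T Y$ for some $Y$. We then need $Y$ itself to be an $n$-th relative syzygy up to stable retract. This suggests using the adjunction $\Sigma_T^n\dashv\Omega_T^n$ to transport the vanishing condition.

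First, by \cref{thm:adjunction} iterated,
\[
 \uHom_T(\Omega_T^nM,\Omega_T^n\D\Abar)\cong\uHom_T(\Sigma_T^n\Omega_T^nM,\D\Abar).
\]
So the hypothesis says exactly that $Z:=\Sigma_T^n\Omega_T^nM$ satisfies condition (3) of \cref{prop:torsionless}. Hence $Z$ is $T$-torsionless, and $Z\cong\Omega_TY$ in $\stC$ for some $Y\in\C$. Here $Z$ is taken via the cokernel model of \cref{thm:adjunction}, and \cref{prop:torsionless} is applied to a module representative. This is legitimate because condition (3) only depends on the stable class: $\uHom_T(-,\D\Abar)$ is a functor on $\stC$.

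Next, take $N=Y$. The unit $\eta_{n,X}:X\to\Omega_T^n\Sigma_T^nX=\Omega_T^nZ\cong\Omega_T^{n+1}Y$ is available, and we must show that $X$ is a retract of $\Omega_T^nZ$. Here the counit does not obviously help, so we argue via the triangle identity. The composite
\[
 \Omega_T^n\varepsilon_{n,M}\circ\eta_{n,\Omega_T^nM}:\Omega_T^nM\to\Omega_T^n\Sigma_T^n\Omega_T^nM\to\Omega_T^nM
\]
is the identity of $\Omega_T^nM$. Thus $X=\Omega_T^nM$ is a retract of $\Omega_T^nZ\cong\Omega_T^n\Omega_TY=\Omega_T^{n+1}Y$ in $\stC$. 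By definition this gives $\dell_T(M)\le n$; equivalently, (1) of \cref{thm:unit} holds with $N=Y$.

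The main point requiring care is whether $\Omega_T^n$ respects the stable isomorphism $Z\cong\Omega_TY$. It does, because $\Omega_T$ is a well-defined endofunctor of $\stC$ (stably isomorphic objects differ by $\add T$ summands, whose syzygies are zero in $\stC$). We also need that the triangle identity holds for the iterated adjunction $\Sigma_T^n\dashv\Omega_T^n$, which is formal. The case $n=0$ is consistent with this: then $\Sigma_T^0=\mathrm{Id}$, and the hypothesis directly makes $M$ torsionless, so $M\cong\Omega_TY$.
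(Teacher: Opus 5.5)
Your proof is correct and follows essentially the same argument as the paper. You transport the vanishing condition through $\Sigma_T^n\dashv\Omega_T^n$, apply \cref{prop:torsionless} to a representative of $\Sigma_T^n\Omega_T^nM$ to write it as $\Omega_TY$, and use the triangle identity to make $\Omega_T^nM$ a retract of $\Omega_T^{n+1}Y$. The detour through \cref{thm:unit} is unnecessary, and the remark that the counit ``does not obviously help'' is misleading, since the triangle identity is exactly where the counit $\varepsilon_{n,M}$ does the work.
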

\begin{proof}
By adjunction, \eqref{eq:Hom-condition} is equivalent to
\[
 \uHom_T(\Sigma_T^n\Omega_T^nM,\D\Abar)=0.
\]
By \cref{prop:torsionless}, a representative $X$ of $\Sigma_T^n\Omega_T^nM$ satisfies $X\cong\Omega_TY$ for some $Y\in\C$. The triangle identity
\begin{equation}\label{eq:triangle-identity}
 \Omega_T^n(\varepsilon_{n,M})\eta_{n,\Omega_T^nM}
 =1_{\Omega_T^nM}
\end{equation}
makes $\Omega_T^nM$ a retract of $\Omega_T^nX\cong\Omega_T^{n+1}Y$. This proves the claim, including $n=0$.
\end{proof}

For $T=A$, the stable Hom formulation is the one underlying the Artin-algebra criterion in \cite[Corollary~1.15]{Gelinas}. Ordinary Hom-vanishing also suffices, since it implies \eqref{eq:Hom-condition}. For a general support $\tau$-tilting module the appropriate cogenerator is $\D\Abar$, which need not be $\D A$.

\subsection{Higher torsionfreeness}

We next characterize higher relative torsionfreeness by left $\add T$-approximations.

\begin{samepage}
\begin{proposition}[Approximation criterion]\label{prop:approximation}
For $X\in\C$ and $k\ge1$, the following are equivalent:
\begin{enumerate}
\item $X$ is $k$-$T$-torsionfree;
\item there is an exact sequence
\begin{equation}\label{eq:coresolution}
 0\longrightarrow X\longrightarrow T^0\longrightarrow T^1
 \longrightarrow\cdots\longrightarrow T^{k-1},
 \qquad T^i\in\add T,
\end{equation}
such that the following dual sequence is exact:
\begin{equation}\label{eq:dual-coresolution}
 (T^{k-1})^*\longrightarrow\cdots\longrightarrow(T^0)^*
 \longrightarrow X^*\longrightarrow0
\end{equation}
\end{enumerate}
No exactness condition is imposed beyond the last displayed term of \eqref{eq:coresolution}. All its successive cokernels, including the last one, lie in $\C$. In particular, (1) implies $X\cong\Omega_T^kY$ in $\stC$ for some $Y\in\C$.
\end{proposition}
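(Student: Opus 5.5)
Everything will be transported to $\modu B$ through $F$ and the transpose comparison of \cref{prop:transpose}. There $k$-$T$-torsionfreeness is the condition $\Ext^i_{B^{\op}}(\Tr_BF(X),{}_BT)=0$ for $1\le i\le k$. The basic tool is dimension shifting along the four-term sequence
\[
0\to X^*\to T_0^*\to T_1^*\to\Tr_TX\to0
\]
from the proof of \cref{lem:evaluation}, combined with the evaluation sequence \eqref{eq:evaluation}. The proof is an induction on $k$, modelled on the classical Auslander--Bridger argument that $k$-torsionfree modules are exactly those with a dual-exact coresolution by projectives. Here $\add T$ replaces $\proj$, and $(-)^*=\Hom_{\Abar}(-,T)$, $(-)^\vee=\Hom_{B^{\op}}(-,{}_BT)$ replace the $A$-duals.

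For (1)$\Rightarrow$(2), start with $k=1$. By \cref{prop:torsionless}, $\delta_X$ is injective. Take a projective cover $P\to X^*$ over $B^{\op}$ with $P=(T^0)^*$, $T^0\in\add T$; the correspondence $\add T\leftrightarrow\proj B^{\op}$ under $(-)^*$ comes from $B=\End_A(T)$. Dualizing gives $X\to (X^*)^\vee\to T^0$. This map is a monomorphism, and it is a left $\add T$-approximation: any $X\to T'$ corresponds to an element of $X^*$, which lifts through $P$. Hence $(T^0)^*\to X^*\to 0$ is exact. The cokernel $X_1$ of $X\to T^0$ lies in $\Fac T=\C$.

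For the induction step, set up a dimension-shift identity
\[
\Ext^{i}_{B^{\op}}(\Tr_TX_1,{}_BT)\cong\Ext^{i+1}_{B^{\op}}(\Tr_TX,{}_BT)\qquad(i\ge1),
\]
valid whenever $0\to X\to T^0\to X_1\to0$ is a conflation with $X$ mapping by a left approximation. To prove it, dualize the conflation. Using surjectivity of $(T^0)^*\to X^*$, the sequence $0\to X_1^*\to (T^0)^*\to X^*\to0$ is exact. This sequence yields a presentation relating $\Tr_TX_1$ to a syzygy-like shift of $\Tr_TX$, up to projective summands, in the manner of Auslander--Bridger's $\Tr\Omega$ versus $\Omega\Tr$ relation. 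Stable invariance (\cref{lem:stable-invariance}) lets us ignore those summands. With this identity, $X$ being $k$-$T$-torsionfree gives $X_1$ $(k-1)$-$T$-torsionfree, and the induction continues to build $T^1,\dots,T^{k-1}$ with exact duals. The successive cokernels lie in $\C$ because they are quotients of objects of $\add T$. Each step is a conflation in $\C$ by $\Ext^1_{\Abar}(T,-)$-vanishing and \cref{lem:projectives}. Reading the coresolution backwards then exhibits $X\cong\Omega_T^kY$, where $Y$ is the last cokernel.

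For (2)$\Rightarrow$(1), the dual-exactness of \eqref{eq:dual-coresolution} together with $(T^i)^*$ projective gives the start of a projective resolution of $X^*$. Splicing it with $0\to X^*\to T_0^*\to T_1^*\to\Tr_TX\to0$ identifies the groups $\Ext^i(\Tr_TX,{}_BT)$, for $i\le k$, with the cohomology of the double dual. Under $\delta$ on $\add T$, that double dual is the original exact sequence $0\to X\to T^0\to\cdots\to T^{k-1}$. Exactness there, plus injectivity of $\delta_X$ from the embedding (\cref{prop:torsionless}), forces the vanishing.

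The main obstacle is the dimension-shift identity in the induction step, specifically checking that the transpose of the cokernel is a syzygy of the transpose up to $\add$ of projectives. I would first try the Auslander--Bridger computation in $\modu B$: by \cref{prop:transpose} and the equivalence $F:\C\simeq\Y$, the whole question becomes the classical statement for $F(X)$ with the dual taken into ${}_BT$ rather than $B$. There I would verify exactness by a direct diagram chase, using that $(-)^\vee$ turns $(T^i)^*$ back into $T^i$.
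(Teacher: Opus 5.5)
Your (2)$\Rightarrow$(1) is the paper's computation in different packaging. Splicing the dual sequence onto $0\to X^*\to T_0^*\to T_1^*\to\Tr_TX\to0$ and reading cohomology on $0\to T_1\to T_0\to T^0\to\cdots\to T^{k-1}$ is what \cref{lem:evaluation} and \eqref{eq:double-shift} do when they separate $\delta_X$ from the groups $\Ext^q_{B^{\op}}(X^*,{}_BT)$. The cohomology at $T_0$ is killed simply by injectivity of $X\to T^0$.

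For (1)$\Rightarrow$(2) your route genuinely differs. The paper does not induct. For $k\ge2$ it rewrites $k$-$T$-torsionfreeness as ``$\delta_X$ is an isomorphism and $\Ext^q_{B^{\op}}(X^*,{}_BT)=0$ for $1\le q\le k-2$''. It then applies $(-)^\vee$ to a projective resolution $Q_\bullet\to X^*$ all at once. The vanishing makes $0\to X\to Q_0^\vee\to\cdots\to Q_{k-1}^\vee$ exact, and its dual is the resolution itself. You instead build the coresolution one left approximation at a time. This rests on the shift lemma $\Tr_TX_1\simeq\Omega_{B^{\op}}\Tr_TX$ modulo projective summands.

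That lemma is true, but your sketch does not yet prove it. The dual sequence $0\to X_1^*\to(T^0)^*\to X^*\to0$ alone only gives $X_1^*\simeq\Omega_{B^{\op}}X^*$. Via \eqref{eq:double-shift}, that yields the shift only in degrees $\ge3$. Your induction also needs degrees $1$ and $2$, which concern $\delta_{X_1}$, and the dual sequence does not see this.

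The fix is short. $T_0\xrightarrow{\iota p}T^0\to X_1\to0$ is a proper presentation of $X_1$, because the conflation is $F$-exact and $p$ is a right approximation. Surjectivity of $\iota^*$ gives $\operatorname{Im}(\iota p)^*=p^*(X^*)$. Hence $\Tr_TX_1\simeq T_0^*/X^*\cong\operatorname{Im}(T_0^*\to T_1^*)\simeq\Omega_{B^{\op}}\Tr_TX$.

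Equivalently, $F$-exactness makes $F(X_1)$ the cokernel of the left projective approximation $F(X)\to F(T^0)$. So $F(X_1)\simeq\Sigma_BF(X)$, and $\Tr_B\Sigma_B\simeq\Omega_{B^{\op}}\Tr_B$ stably. This step really uses injectivity of $X\to T^0$: $F$ does not carry cokernels of non-injective approximations to the corresponding cokernels, as \cref{ex:strict} shows. Positive Ext into ${}_BT$ vanishes on projectives, so the coefficient module plays no role.

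With this in place, your route gives something the paper leaves implicit: for $k\ge2$, $X$ is $k$-$T$-torsionfree if and only if $X$ is $T$-torsionless and $\Sigma_TX$ is $(k-1)$-$T$-torsionfree. The paper's route is shorter and needs no new lemma beyond \cref{lem:evaluation}. It also makes the dual sequence literally a projective resolution of $X^*$.
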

\end{samepage}
\begin{proof}
For $k=1$, condition (1) means that evaluation is injective. Choose a projective epimorphism $Q_0\to X^*$ over $B^{\op}$. Its dual, composed with evaluation, gives an injection $X\to Q_0^\vee$. The identity
\[
 \Hom_{\Abar}(Q_0^\vee,T)\cong Q_0
\]
identifies its dual map with $Q_0\to X^*$. Hence it is a left $\add T$-approximation and gives (2). The converse follows from \cref{prop:torsionless}.

Suppose $k\ge2$. By \cref{lem:evaluation}, condition (1) is equivalent to $\delta_X$ being an isomorphism together with
\begin{equation}\label{eq:dual-vanishing}
 \Ext^q_{B^{\op}}(X^*,{}_BT)=0\qquad(1\le q\le k-2),
\end{equation}
where this range is empty for $k=2$. Dualize a projective resolution $Q_\bullet\to X^*$ by $(-)^\vee$. Evaluation and \eqref{eq:dual-vanishing} give the exact segment
\[
 0\longrightarrow X\longrightarrow Q_0^\vee
 \longrightarrow\cdots\longrightarrow Q_{k-1}^\vee.
\]
Each $Q_i^\vee$ belongs to $\add T$. Applying $(-)^*$ identifies the resulting segment with the original projective resolution, since $Q_i\cong(Q_i^\vee)^*$ naturally. This proves (2).

Conversely, \eqref{eq:dual-coresolution} is a projective resolution segment of $X^*$. Dualizing its first two terms and comparing with \eqref{eq:coresolution} shows that $\delta_X$ is an isomorphism. For $1\le q\le k-2$, the cohomology computing $\Ext^q_{B^{\op}}(X^*,{}_BT)$ is zero by exactness of \eqref{eq:coresolution}. Equations~\eqref{eq:evaluation} and \eqref{eq:double-shift} now give (1).

Let $C_0=X$ and let $C_{i+1}$ be the cokernel of the induced injection $C_i\to T^i$ for $0\le i<k$. Each $C_{i+1}$ is a quotient of $T^i$, so belongs to $\Fac T$. The conflations $0\to C_i\to T^i\to C_{i+1}\to0$ give $C_i\cong\Omega_TC_{i+1}$ stably. Take $Y=C_k$.
\end{proof}

This is the approximation description associated with Huang's relative torsionfreeness and the Xi-transpose formulation of Zhang--Geng \cite{HuangOmega,ZhangGeng}. The proof also explains why an ambient relative syzygy sequence becomes a sequence of conflations in $\C$.

\begin{theorem}[Higher torsionfreeness criterion]\label{thm:higher}
Let $M\in\C$, $n\ge0$ and $k\ge1$. If one, equivalently every, representative of $\Sigma_T^n\Omega_T^nM$ is $k$-$T$-torsionfree, then
\[
 j\text{-}\dell_T(M)\le n\qquad(1\le j\le k).
\]
\end{theorem}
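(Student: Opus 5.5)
The plan follows the pattern of the stable Hom criterion (\cref{thm:Hom}), with \cref{prop:approximation} used in place of \cref{prop:torsionless}.

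The proof proceeds in four steps.

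First, the ``equivalently every'' clause is exactly \cref{lem:stable-invariance}. Being $k$-$T$-torsionfree is invariant under isomorphism in $\stC$, so the hypothesis does not depend on the chosen representative $X$ of $\Sigma_T^n\Omega_T^nM$.

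Second, fix such a representative $X$. By \cref{prop:approximation}, $k$-$T$-torsionfreeness gives an $\add T$-coresolution segment \eqref{eq:coresolution} whose successive cokernels $C_i$ lie in $\C$. It also gives conflations $0\to C_i\to T^i\to C_{i+1}\to0$ for $0\le i<k$. Hence $X\cong\Omega_T^jC_j$ in $\stC$ for every $1\le j\le k$, not only for $j=k$. This uses that $\Omega_T$ is well defined on $\stC$ via any conflation with projective middle term, which follows from Schanuel's lemma in the exact category $\C$. Alternatively, one can note that the truncated sequence up to $T^{j-1}$ already witnesses $j$-$T$-torsionfreeness, since \eqref{eq:torsionfree} for $k$ implies it for $j\le k$.

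Third, invoke the triangle identity \eqref{eq:triangle-identity}:
\[
 \Omega_T^n(\varepsilon_{n,M})\,\eta_{n,\Omega_T^nM}=1_{\Omega_T^nM}.
\]
This makes $\Omega_T^nM$ a retract in $\stC$ of $\Omega_T^n\Sigma_T^n\Omega_T^nM\cong\Omega_T^nX$. Since $\Omega_T$ is a functor on $\stC$, the isomorphism $X\cong\Omega_T^jC_j$ gives $\Omega_T^nX\cong\Omega_T^{n+j}C_j$. So $\Omega_T^nM$ is a retract of $\Omega_T^{n+j}N$ with $N=C_j\in\C$. By \cref{def:delooping}, $j$-$\dell_T(M)\le n$ for each $1\le j\le k$. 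The case $n=0$ needs no separate treatment, because then $\eta_0$ and $\varepsilon_0$ are identities and $M$ itself is $\Omega_T^jC_j$.

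The argument is essentially formal once \cref{prop:approximation} is available. The only point needing care is the second step: checking that the stable isomorphism $X\cong\Omega_T^jC_j$ holds for intermediate $j$, and that iterating the one-step identifications $C_i\cong\Omega_TC_{i+1}$ is compatible with $\Omega_T$ acting on $\stC$. I would handle this by applying $\Omega_T$ to the stable isomorphisms inductively, since $\Omega_T$ preserves stable isomorphisms by \cref{lem:transport}. This avoids any appeal to a triangulated structure on $\stC$, which \cref{thm:adjunction}'s remark warns is not available.
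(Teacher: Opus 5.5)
Your proposal is correct and follows essentially the same route as the paper: \cref{lem:stable-invariance} for representative independence, \cref{prop:approximation} for a stable isomorphism of the representative with a $k$-fold relative syzygy, and the triangle identity \eqref{eq:triangle-identity} to exhibit $\Omega_T^nM$ as a stable retract of $\Omega_T^{n+k}Y$. The paper handles $j<k$ by noting that $k$-$T$-torsionfree implies $j$-$T$-torsionfree; your use of the intermediate cokernels $C_j$ is an equally valid variant, and the citation of \cref{lem:transport} is unnecessary because $\Omega_T$ is already an endofunctor of $\stC$.
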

\begin{proof}
Representative independence is \cref{lem:stable-invariance}. By \cref{prop:approximation}, such a representative $X$ is stably isomorphic to $\Omega_T^kY$ for some $Y\in\C$. Equation~\eqref{eq:triangle-identity} makes $\Omega_T^nM$ a retract of $\Omega_T^nX\cong\Omega_T^{n+k}Y$. This proves $k$-$\dell_T(M)\le n$. For $j\le k$, the same argument applies because $k$-$T$-torsionfree implies $j$-$T$-torsionfree.
\end{proof}

\begin{remark}\label{rem:coefficients}
Although $\Tr_T X\cong\Tr_BF(X)$, relative torsionfreeness uses the coefficient module ${}_BT$, whereas ordinary torsionfreeness over $B$ uses ${}_BB$. These conditions should not be interchanged. Similarly, $\Sigma_T$ is obtained by applying $G$ to $\Sigma_BF$, and $G$ can annihilate nonzero $B$-modules. Both distinctions are visible in \cref{ex:strict}.
\end{remark}

\section{Finitistic dimension and comparison gaps}\label{sec:comparison}

In this section, we first express finitistic-dimension bounds through a finite family of relative tests. We then construct the comparison gaps in \cref{intro:gaps}; the passage to infinite global dimension uses tensor induction.

\subsection{A finite family of tests}

For a finite-dimensional algebra $\Lambda$, we write $\findim\Lambda$ for the supremum of the finite projective dimensions of finitely generated right $\Lambda$-modules. We use
\[
 \dell\Lambda=\max\{\dell_\Lambda S\mid
 S\text{ is a simple right }\Lambda\text{-module}\}.
\]
We also write $\Findim\Lambda$ for the supremum of the finite
projective dimensions of all right $\Lambda$-modules, without a
finite-generation assumption, and $\ddell\Lambda$ for the derived
delooping level of Guo--Igusa. We will use their inequality
\cite[Theorem~1.1]{GuoIgusa}
\begin{equation}\label{eq:dimension-chain}
 \findim\Lambda^{\op}\le\Findim\Lambda^{\op}
 \le\ddell\Lambda\le\dell\Lambda.
\end{equation}
The maximum defining $\dell\Lambda$ is taken over simple modules.
The global delooping level, defined using all finitely generated modules
\cite{BarriosLanzilottaMata}, is a different invariant and is not used below.

Write $T=\bigoplus_{i=1}^rT_i$ with $T_i$ indecomposable, and put
\[
 \begin{gathered}
 P_i=F(T_i),\qquad L_i=P_i/\rad P_i,\\
 \mathcal I=\{i\mid L_i\in\Y\},\qquad
 \mathcal J=\{1,\ldots,r\}\setminus\mathcal I.
 \end{gathered}
\]
For $i\in\mathcal I$ set $E_i=G(L_i)$. For $j\in\mathcal J$ set $N_j=G(\rad P_j)$. The module $\rad P_j$ belongs to $\Y$ because $P_j\in\Y$ and $\Y$ is closed under submodules. Thus
\begin{equation}\label{eq:test-objects}
 F(E_i)\cong L_i,\qquad F(N_j)\cong\rad P_j.
\end{equation}
The $E_i$ are pairwise Hom-orthogonal bricks. By \cite[Theorem~4.1]{XuZhang}, they are the bricks of the semibrick associated with $T$. Defining them by \eqref{eq:test-objects} avoids any dependence on a choice of labeling.

\begin{proposition}[Xu--Zhang bound]\label{prop:findim-bound}
With empty index sets omitted, put
\begin{equation}\label{eq:delta}
 \delta_T=\max\left\{
 \sup_{i\in\mathcal I}\dell_T(E_i),\quad
 \sup_{j\in\mathcal J}\bigl(2\text{-}\dell_T(N_j)+1\bigr)
 \right\}.
\end{equation}
Then
\[
 \findim B^{\op}\le\Findim B^{\op}\le\ddell B
 \le\dell B\le\delta_T.
\]
\end{proposition}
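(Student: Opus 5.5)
The first three inequalities are exactly \eqref{eq:dimension-chain} applied to $\Lambda=B$, so the plan reduces to proving $\dell B\le\delta_T$. We may assume $\delta_T<\infty$. We have to bound $\dell_B S$ for every simple right $B$-module. Since $B\cong\End_A(T)$ and $F(T)=\bigoplus P_i$, these simples are exactly the $L_i=P_i/\rad P_i$, $1\le i\le r$. We split into the two cases $i\in\mathcal I$ and $j\in\mathcal J$.

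\emph{Case $i\in\mathcal I$.} Here $L_i\cong F(E_i)$ by \eqref{eq:test-objects}. The first inequality of \cref{prop:comparison} with $k=1$ gives
\[
 \dell_B L_i=\dell_BF(E_i)\le\dell_T(E_i)\le\delta_T.
\]

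\emph{Case $j\in\mathcal J$.} Use the exact sequence $0\to\rad P_j\to P_j\to L_j\to0$. Because $P_j$ is projective, $\Omega_BL_j\cong\rad P_j\cong F(N_j)$ in $\underline{\modu}B$. Put $d=2\text{-}\dell_T(N_j)$. By the first inequality of \cref{prop:comparison} with $k=2$, the module $\Omega_B^dF(N_j)$ is a stable retract of $\Omega_B^{d+2}U$ for some $U\in\modu B$. Rewriting,
\[
 \Omega_B^{d+1}L_j\ \text{is a stable retract of}\ \Omega_B^{(d+1)+1}U.
\]
By definition this means $\dell_BL_j\le d+1\le\delta_T$.

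Taking the maximum over all simples gives $\dell B\le\delta_T$. The only point needing care is the identification of the simple $B$-modules with the tops $L_i$ of the $P_i=F(T_i)$. This holds because $F$ restricts to an equivalence $\add T\simeq\proj B$, which sends indecomposables to indecomposable projectives. The one nonformal step is the index shift in the second case: converting $2$-delooping of the syzygy $\Omega_BL_j$ at level $d$ into ordinary $1$-delooping of $L_j$ at level $d+1$. This is a direct unwinding of the definition, with $d+2=(d+1)+1$.
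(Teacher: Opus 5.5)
Your proof is correct and follows essentially the paper's argument. The first three inequalities come from the Guo--Igusa chain, and the simples $L_i$ with $i\in\mathcal I$ are handled by the first comparison inequality. For $j\in\mathcal J$, the identification $\Omega_BL_j=\rad P_j\cong F(N_j)$ turns a $2$-delooping of $N_j$ at level $d$ into a $1$-delooping of $L_j$ at level $d+1$. The only difference is cosmetic: you pass through the $k=2$ comparison inequality, which tacitly uses that delooping at one level persists at all higher levels, whereas the paper applies $F$ directly to a relative witness at level $d$.
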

\begin{proof}
The inequalities preceding $\dell B$ follow from
\eqref{eq:dimension-chain}. It remains to bound the ordinary
delooping levels of the simple $B$-modules.

For $i\in\mathcal I$, \cref{prop:comparison} gives $\dell_B(L_i)\le\dell_T(E_i)$. For $j\in\mathcal J$, suppose $e=2$-$\dell_T(N_j)<\infty$. Applying $F$ to a witness at level $e$ shows that $\Omega_B^e\rad P_j$ is a retract of $\Omega_B^{e+2}F(Y)$ for some $Y\in\C$. Since $\rad P_j=\Omega_BL_j$, this is a delooping of $L_j$ at level $e+1$. Hence
\[
 \dell_B(L_j)\le2\text{-}\dell_T(N_j)+1.
\]
The inequality is automatic when $e=\infty$. Taking the maximum over the simple modules proves the assertion. This supplies a direct proof of \cite[Theorem~4.5(2)]{XuZhang} with the notation above.
\end{proof}

After relabeling the $E_i$ as the associated semibrick, $\delta_T$ is the invariant $\dell_T\C$ of \cite[Definition~4.4]{XuZhang}. The proof explains both the use of $2$-delooping and the extra $1$ for the indices in $\mathcal J$.

\begin{samepage}
\begin{corollary}\label{cor:findim}
Let $n\ge1$. Assume that
\begin{enumerate}
\item $\Sigma_T^n\Omega_T^nE_i$ has a $1$-$T$-torsionfree representative for every $i\in\mathcal I$;
\item $\Sigma_T^{n-1}\Omega_T^{n-1}N_j$ has a $2$-$T$-torsionfree representative for every $j\in\mathcal J$.
\end{enumerate}
Then $\Findim B^{\op}\le\ddell B\le n$. Condition \textup{(1)} is equivalent to
\begin{equation}\label{eq:finite-Hom}
 \uHom_T(\Omega_T^nE_i,\Omega_T^n\D\Abar)=0
 \qquad(i\in\mathcal I).
\end{equation}
If $\mathcal J$ is empty, the assertion holds for $n\ge0$ using only \textup{(1)}.
\end{corollary}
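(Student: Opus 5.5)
The plan is to reduce everything to \cref{prop:findim-bound}: it suffices to show $\delta_T\le n$. I will bound each of the two kinds of test separately by \cref{thm:higher} and then read off $\Findim B^{\op}\le\ddell B\le\dell B\le\delta_T\le n$ from the chain proved there.

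\textbf{Indices in $\mathcal I$.} Condition (1) says that $\Sigma_T^n\Omega_T^nE_i$ has a $1$-$T$-torsionfree representative. By \cref{lem:stable-invariance} this does not depend on the representative chosen. Applying \cref{thm:higher} with $k=1$ gives $\dell_T(E_i)\le n$.

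\textbf{Indices in $\mathcal J$.} Condition (2) and \cref{thm:higher}, applied with $n-1$ in place of $n$ and with $k=2$, give $2$-$\dell_T(N_j)\le n-1$. Hence $2$-$\dell_T(N_j)+1\le n$. This is the only place where $n\ge1$ is used, since $n-1$ must be a nonnegative index.

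Together the two cases give $\delta_T\le n$. When $\mathcal J$ is empty, only the first supremum appears in \eqref{eq:delta}, and the argument for $\mathcal I$ works verbatim for every $n\ge0$.

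\textbf{Equivalence of (1) with \eqref{eq:finite-Hom}.} Fix a representative $X$ of $\Sigma_T^nE$ with $E=\Omega_T^nE_i$.
\begin{itemize}
\item By \cref{prop:torsionless}, the condition that $X$ is $1$-$T$-torsionfree, namely $\Ext^1_{B^{\op}}(\Tr_TX,{}_BT)=0$, is equivalent to $\uHom_T(X,\D\Abar)=0$.
\item The adjunction $\Sigma_T^n\dashv\Omega_T^n$ from \cref{thm:adjunction} gives
\[
 \uHom_T(\Sigma_T^n\Omega_T^nE_i,\D\Abar)\cong\uHom_T(\Omega_T^nE_i,\Omega_T^n\D\Abar).
\]
\end{itemize}
So (1) holds if and only if \eqref{eq:finite-Hom} holds. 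This is the same translation already used in the proof of \cref{thm:Hom}.

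I expect no serious obstacle. The only points needing care are the index shift $n-1$ for $\mathcal J$, which is what forces the hypothesis $n\ge1$, and the remark that the existence of one torsionfree representative is enough.
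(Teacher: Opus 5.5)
Your proposal is correct and follows the paper's proof: \cref{thm:higher}, with $k=1$ at level $n$ for the $E_i$ and with $k=2$ at level $n-1$ for the $N_j$, gives $\delta_T\le n$, and \cref{prop:findim-bound} then yields the chain, while the equivalence of (1) with \eqref{eq:finite-Hom} comes from \cref{prop:torsionless} together with $\Sigma_T^n\dashv\Omega_T^n$. The paper treats the case $\mathcal J=\emptyset$ the same way, noting that only the shifted term forces $n\ge1$.
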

\end{samepage}
\begin{proof}
\cref{thm:higher} gives $\dell_T(E_i)\le n$ and $2$-$\dell_T(N_j)\le n-1$. Apply \cref{prop:findim-bound}. The equivalence of (1) and \eqref{eq:finite-Hom} follows from \cref{prop:torsionless} and $\Sigma_T^n\dashv\Omega_T^n$. When $\mathcal J$ is empty there is no shifted term, so the same proof allows $n=0$.
\end{proof}

We first record a consequence of the Horseshoe Lemma. For $V\in\C$, let $\pd_TV$ denote the least length of a proper $\add T$-resolution. By \cref{lem:transport}, this equals $\pd_BF(V)$.
The ordinary specialization of the following lemma is the embedding
criterion used in \cite{RingelNakayama}.

\begin{lemma}\label{lem:finite-pd-embedding}
Let $M,V\in\C$ and suppose that there is a monomorphism $M\to V$ with $\pd_TV\le d<\infty$. Then $\dell_T(M)\le d$. In particular, if each simple right module $S$ over a finite-dimensional algebra $\Lambda$ embeds in a module $W_S$ of finite projective dimension, then
\begin{equation}\label{eq:simple-embedding-bound}
 \findim\Lambda^{\op}\le\dell\Lambda
 \le\max_S\pd_\Lambda W_S.
\end{equation}
\end{lemma}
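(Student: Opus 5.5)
Take the cokernel of the monomorphism and compare proper resolutions. Let $M\to V$ be the monomorphism, with cokernel $C$. The first step is to check that the sequence $0\to M\to V\to C\to0$ is a conflation in $\C$. Since $V\in\Fac T$, its quotient $C$ lies in $\Fac T$, and all three terms lie in $\C$.

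Next apply the Horseshoe Lemma in the exact category $\C$, which has enough projectives $\add T$ by \cref{lem:projectives}. Take proper $\add T$-resolutions of $M$ and $C$ and combine them into one for $V$. Taking syzygies gives conflations
\[
0\longrightarrow\Omega_T^iM\longrightarrow\Omega_T^iV\oplus T_i'\longrightarrow\Omega_T^iC\longrightarrow0
\]
for every $i\ge0$, with $T_i'\in\add T$. At $i=d$ we use $\pd_TV\le d$: then $\Omega_T^dV$ lies in $\add T$, so the middle term lies in $\add T$.

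This conflation has projective middle term, so it exhibits $\Omega_T^dM$ as $\Omega_T^{d+1}C$ in $\stC$. Here we use that syzygies computed from arbitrary projective presentations agree stably, via Schanuel's lemma in $\C$. Therefore $\Omega_T^dM$ is even stably isomorphic to $\Omega_T^{d+1}N$ with $N=C$, which gives $\dell_T(M)\le d$ by \cref{def:delooping}.

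The point needing care is that the Horseshoe conflation at stage $d$ really is a conflation in $\C$ with $\Omega_T^dM$ as its first term. For this I would use that conflations are closed under the horseshoe construction because $\Ext^1_{\Abar}(T,-)$ vanishes on $\C$. Hence every kernel in the construction stays in $T^{\perp_1}=\C$.

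For the special case, take $T=\Lambda$ as in \cref{ex:quotient}, so that $\C=\modu\Lambda$ and $\pd_T=\pd_\Lambda$. The first part then gives $\dell_\Lambda S\le\pd_\Lambda W_S$ for each simple module $S$. Taking the maximum over $S$ gives the right-hand inequality of \eqref{eq:simple-embedding-bound}. The left-hand inequality $\findim\Lambda^{\op}\le\dell\Lambda$ is the chain \eqref{eq:dimension-chain}. No step seems a real obstacle; the Horseshoe bookkeeping in the exact category is the only thing to verify.
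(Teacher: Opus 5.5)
Your proposal is correct and follows the paper's proof essentially step for step. The steps match: the conflation $0\to M\to V\to V/M\to0$ in $\Fac T$, the Horseshoe Lemma, and the relative Schanuel lemma with $\pd_TV\le d$ to put the stage-$d$ middle term in $\add T$, hence $\Omega_T^dM\cong\Omega_T^{d+1}(V/M)$ in $\stC$. The special case is $A=T=\Lambda$ together with the chain \eqref{eq:dimension-chain}, which is exactly what the paper's appeal to the first inequality of \cref{prop:findim-bound} amounts to.
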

\begin{proof}
Put $Z=V/M$. Since $\Fac T$ is closed under quotients, the sequence $0\to M\to V\to Z\to0$ is a conflation in $\C$. Apply the Horseshoe Lemma to proper $\add T$-resolutions of $M$ and $Z$. At the $d$th syzygy it gives a conflation
\[
 0\longrightarrow\Omega_T^dM\longrightarrow Q
 \longrightarrow\Omega_T^dZ\longrightarrow0,
 \qquad Q\in\add T,
\]
because the $d$th syzygy in any proper resolution of $V$ is projective in $\C$, by the relative Schanuel lemma and $\pd_TV\le d$. Consequently $\Omega_T^dM\cong\Omega_T^{d+1}Z$ in $\stC$, so $\dell_T(M)\le d$. This also covers $d=0$, with $Q=V$. For the last assertion, take $A=T=\Lambda$ and apply the first inequality of \cref{prop:findim-bound}.
\end{proof}

\subsection{Unbounded comparison gaps and sharpness}

Barrios--Lanzilotta--Mata \cite[Example~4.22]{BarriosLanzilottaMata}
give unbounded gaps between an algebra's ordinary delooping level and a
finitistic dimension. Here the comparison is between two higher delooping
levels of corresponding objects: one computed in $\Fac T/[\add T]$ and
the other in $\underline{\modu}B$.

\begin{theorem}[Unbounded relative comparison gaps]\label{thm:unbounded-gap}
For an integer $d\ge1$, let $Q_d$ be the quiver
\[
 0\xrightarrow{\alpha_0}1\xrightarrow{\alpha_1}2
 \longrightarrow\cdots\xrightarrow{\alpha_d}d+1,
\]
and put
\begin{equation}\label{eq:gap-algebra}
 A_d=KQ_d/(\alpha_i\alpha_{i+1}\mid1\le i\le d-1).
\end{equation}
Paths are multiplied in their order of traversal; the relation set
is empty for $d=1$. Write $P_i=e_iA_d$ and $S_i=\operatorname{top}P_i$,
and set
\begin{equation}\label{eq:gap-tilting}
 T_d=S_0\oplus P_0\oplus\bigoplus_{i=2}^{d+1}P_i,
 \qquad X_d=P_0/\rad^2P_0.
\end{equation}
Then $T_d$ is a classical tilting module with $\pd_{A_d}T_d=1$.
For $B_d=\End_{A_d}(T_d)$ and $F_d=\Hom_{A_d}(T_d,-)$, one has
\begin{equation}\label{eq:gap-endomorphism}
 B_d\cong KQ_d/J_{Q_d}^2,\qquad
 \gldim A_d=d,\qquad \gldim B_d=d+1,
\end{equation}
where $J_{Q_d}$ is the arrow ideal. Moreover,
\begin{equation}\label{eq:gap-all-levels}
 \begin{gathered}
 k\text{-}\dell_{T_d}(X_d)=d\qquad(k\ge1),\\
 k\text{-}\dell_{B_d}F_d(X_d)=
 \begin{cases}0,&k=1,\\ d,&k\ge2,\end{cases}
 \end{gathered}
\end{equation}
and the bound in \cref{prop:findim-bound} is sharp:
\begin{equation}\label{eq:gap-delta}
 \delta_{T_d}=d+1=\findim B_d^{\op}.
\end{equation}
\end{theorem}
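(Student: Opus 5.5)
The plan is to make everything explicit. Both $A_d$ and $B_d$ are representation-finite with very small indecomposables, so every relative and ordinary syzygy can be computed by hand. All comparisons of levels will then reduce to bookkeeping of which simple $B_d$-modules can occur as high syzygies.

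\emph{Tilting and global dimensions.} For right modules, $P_0$ is uniserial with composition factors $S_0,S_1,S_2$, since $\alpha_0\alpha_1\neq0$ while $\alpha_0\alpha_1\alpha_2=0$ for $d\ge2$. For $1\le i\le d$ the module $P_i$ is uniserial with factors $S_i,S_{i+1}$, and $P_{d+1}=S_{d+1}$. Hence $\rad P_0\cong P_1$, and $0\to P_1\to P_0\to S_0\to0$ gives $\pd S_0=1$. The vanishing of $\Ext^1(S_0,T_d)$ is checked summand by summand:
\begin{itemize}
\item for $P_0$, the inclusion $P_1\to P_0$ factors through $P_0$;
\item $\Hom(P_1,P_i)=0$ for $i\ge2$, since $S_1$ is not a composition factor of $P_i$;
\item $\Hom(P_1,S_0)=0$.
\end{itemize}
Since $T_d$ has $d+2$ pairwise nonisomorphic indecomposable summands, it is a classical tilting module with $\pd T_d=1$. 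The minimal resolutions $0\to P_{d+1}\to P_d\to\cdots\to P_1\to S_1$ and the analogous ones for the other simples give $\gldim A_d=d$.

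Next I compute the Hom-spaces between the summands of $T_d$. One finds a single nonzero irreducible map along a line $S_0\leftarrow P_0\leftarrow P_2\leftarrow P_3\leftarrow\cdots\leftarrow P_{d+1}$, where $P_2=\rad^2$-socle side of $P_0$, and all composites of two such maps vanish. This identifies $B_d\cong KQ_d/J^2$, with $\gldim B_d=d+1$. I will label the simple $B_d$-modules $L_0,\dots,L_{d+1}$ so that $\Omega_BL_j\cong L_{j+1}$ and $L_{d+1}$ is projective.

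\emph{Syzygies.} In $\C$, the minimal right $\add T_d$-approximation of $X_d$ is $P_0\to X_d$, with kernel $S_2$. For $2\le j\le d$ the approximation $P_j\to S_j$ has kernel $S_{j+1}$, and $S_{d+1}=P_{d+1}\in\add T_d$. Hence
\[
\Omega_T^eX_d\cong S_{e+1}\quad(1\le e\le d-1),\qquad \Omega_T^dX_d\cong 0\ \text{in } \stC.
\]
This gives $k$-$\dell_{T_d}(X_d)\le d$ for all $k$. Applying $F_d$, I expect $F_d(X_d)\cong L_1$ and $F_d(S_{j})\cong L_{j}$, which is consistent with \cref{lem:transport}.

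Ordinary levels over $B_d$. Since $L_1=\Omega_BL_0$, we get $\dell_BF(X_d)=0$. For the higher levels, every $\Omega_B^mU$ with $U\in\modu B_d$ is stably a sum of simples $L_j$ with $j\ge m$. For $k\ge2$ and $e<d$, the module $L_{1+e}$ would therefore have to be a summand of a sum of $L_j$ with $j\ge e+k\ge e+2$, which is impossible. So the ordinary $k$-level is exactly $d$, the first index at which $L_{1+e}$ becomes projective.

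For the relative lower bound, I would transport via $F$: a relative witness $N$ has $F(N)\in\Y=\Sub\D T$. The crucial point is to show that $L_0\notin\Y$, equivalently $\Tor_1^B(L_0,T)\neq0$, and more strongly that every object of $\Y$ has composition factors only among $L_j$ with $j\ge1$ modulo projectives. Then $\Omega_B^{e+k}F(N)$ involves only $L_j$ with $j\ge e+k+1\ge e+2$, and the same argument forbids a retraction for $e<d$. This gives $k$-$\dell_{T_d}(X_d)=d$ for all $k\ge1$. It also illustrates why the right inequality of \cref{prop:comparison} is attained while the left one has gap $d$ at $k=1$.

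\emph{The invariant $\delta_{T_d}$.} I will identify the sets $\mathcal I$ and $\mathcal J$ by testing which tops $L_i=\operatorname{top}F(T_i)$ lie in $\Y$; I expect $\mathcal J$ to be exactly the index of the summand corresponding to $L_0$. I will then compute $\dell_T(E_i)$ and $2$-$\dell_T(N_j)$ from the syzygy chains above, using \cref{lem:finite-pd-embedding} for the upper bounds, to get $\delta_{T_d}\le d+1$. Since $B_d^{\op}$ is again a linear radical-square-zero algebra on $d+2$ vertices, $\findim B_d^{\op}=\gldim=d+1$. \cref{prop:findim-bound} then forces equality throughout.

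The main obstacle is the precise description of $\Y=\Sub\D T$ in terms of the labels $L_j$, that is, showing that the vertex omitted from $\Y$ is exactly the one whose loss produces the index shift. I would settle this first by computing $\D T_d$ as a $B_d$-module directly, as a sum of indecomposable injectives and one extra module, and reading off its submodules.
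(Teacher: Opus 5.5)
Your proposal is correct. Its backbone is the paper's: explicit uniserial modules, the chains $X_d\to S_2\to\cdots\to S_{d+1}$ under $\Omega_{T_d}$ and $L_0\to L_1\to\cdots\to L_{d+1}$ under $\Omega_{B_d}$, and a Krull--Schmidt obstruction because syzygies only move forward along a chain. Two steps are organized differently.

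\emph{The relative lower bound.} The paper stays in $\stC$ for the lower bound on $k$-$\dell_{T_d}(X_d)$. It lists the indecomposables of $\Fac T_d$: all uniserial quotients of the $P_i$ except $S_1$ and $P_1$. So the nonzero stable indecomposables are exactly $X_d,S_2,\dots,S_d$, and $\Y_d$ is only afterwards read off as the image of $F_d$. You instead compute $\Y=\Sub\D T_d$ first and run the obstruction in $\underline{\modu}B_d$ through $\underline F$. This is equivalent, and your plan works, since $\D T_d\cong F_d(\D A_d)\cong Q_0\oplus\cdots\oplus Q_d\oplus L_1$, that is, every indecomposable injective except $L_0$, together with $L_1$. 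Your ``more strongly'' claim then needs nothing beyond $L_0\notin\Y$. Indeed, $\Y$ is closed under summands and every indecomposable $B_d$-module is simple or projective.

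\emph{The invariant $\delta_{T_d}$.} The paper identifies $\mathcal I=\{1,\dots,d+1\}$, $\mathcal J=\{0\}$ and $E_1=N_0=X_d$, and evaluates \eqref{eq:delta} directly. Your sandwich $d+1=\findim B_d^{\op}\le\delta_{T_d}\le d+1$ needs only $\pd_{T_d}V\le d$ for every $V\in\C$, which your description of $\Y$ gives. It even shows that the unshifted terms stay $\le d$, so some shifted term attains $d+1$, without locating $\mathcal J$.

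One correction: the upper bounds for the $2$-levels come from $\Omega_{T_d}^dV$ being relatively projective. They do not come from \cref{lem:finite-pd-embedding}, which only bounds $1$-levels.

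The paper's route has the advantage that its explicit description of $\stC$ is reused for the full profile in \cref{cor:gap-profile}. Yours avoids enumerating $\Fac T_d$ and identifying the test objects.
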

\begin{proof}
Put $r=d+1$. The lengths of $P_0,\ldots,P_r$ are $3,2,\ldots,2,1$,
respectively. The projective cover of $S_0$ gives
\[
 0\longrightarrow P_1\longrightarrow P_0
 \longrightarrow S_0\longrightarrow0.
\]
Restriction induces a surjection
\[
 \Hom_{A_d}(P_0,P_0)\longrightarrow\Hom_{A_d}(P_1,P_0).
\]
Moreover,
\[
 \Hom_{A_d}(P_1,S_0)=\Hom_{A_d}(P_1,P_i)=0\qquad(i\ge2).
\]
Thus $\Ext^1_{A_d}(S_0,T_d)=0$ and $\pd_{A_d}S_0=1$.
The other summands of $T_d$ are projective, and
\[
 0\longrightarrow A_d\longrightarrow
 P_0^{\oplus2}\oplus\bigoplus_{i=2}^{r}P_i
 \longrightarrow S_0\longrightarrow0
\]
is a tilting coresolution. This proves the tilting assertion.
For $1\le i<r$, one has $\Omega_{A_d}S_i=S_{i+1}$, and
$S_r=P_r$ is projective. Hence $\gldim A_d=r-1=d$.

Order the summands as $U_0=S_0$, $U_1=P_0$, and $U_i=P_i$ for
$2\le i\le r$. Apart from scalar endomorphisms, the only nonzero
maps between these summands are $U_{i+1}\to U_i$ for $0\le i<r$,
each spanning a one-dimensional Hom-space. Every composite of two
successive maps is zero. The endomorphism convention of \cref{sec:preliminaries}
therefore gives $B_d=KQ_d/J_{Q_d}^2$.
Write $Q_i=e_iB_d$ and $L_i=\operatorname{top}Q_i$.
Then $\Omega_{B_d}L_i=L_{i+1}$ for $i<r$, and $L_r=Q_r$.
Consequently $\gldim B_d=r$.

The indecomposable $A_d$-modules are the uniserial quotients of the
$P_i$. Among them, precisely $S_1$ and $P_1$ fail to belong to
$\Fac T_d$: no map from $T_d$ has image meeting their top.
All remaining modules are quotients of summands of $T_d$.
Thus, modulo $\add T_d$, the nonzero indecomposables are
\[
 X_d,\qquad S_i\quad(2\le i<r).
\]
The conflations with middle terms $P_0$ and $P_i$ ($2\le i<r$) give
\begin{equation}\label{eq:gap-relative-chain}
 \Omega_{T_d}X_d=S_2,\qquad
 \Omega_{T_d}S_i=S_{i+1}\quad(2\le i<r),\qquad S_r\in\add T_d.
\end{equation}
Here the displayed epimorphisms are minimal right
$\add T_d$-approximations. The chain is interpreted as terminating
at the relative projective $S_2$ when $d=1$.

For $0\le t<d$, the module $\Omega_{T_d}^tX_d$ is the
nonprojective object in position $t$ of this chain. For any
$N\in\Fac T_d$ and $k\ge1$, each nonprojective summand of
$\Omega_{T_d}^{t+k}N$ occurs strictly later in the chain.
Krull--Schmidt and the description of stable retracts after
\cref{lem:projectives} exclude a retraction onto
$\Omega_{T_d}^tX_d$. At $t=d$ that object is relative projective.
This proves the first equality in \eqref{eq:gap-all-levels}.

Computing Hom from the summands of $T_d$ gives
\[
 F_d(X_d)=L_1,\qquad F_d(S_i)=L_i\quad(2\le i\le r).
\]
Since $L_1=\Omega_{B_d}L_0$, its ordinary delooping level is zero.
For $k\ge2$ and $0\le t<d$, its $t$th syzygy is $L_{t+1}$.
Every indecomposable $B_d$-module is simple or projective, and a
nonprojective $(t+k)$th syzygy of a simple $L_j$ is
$L_{j+t+k}$. Its index is strictly greater than $t+1$.
Thus $L_{t+1}$ cannot be a stable retract of such a syzygy.
At $t=d$, its syzygy $L_r$ is projective, proving the second
equality in \eqref{eq:gap-all-levels}.

The preceding calculations also identify the torsionfree class as
\[
 \Y_d=\add(Q_0,\ldots,Q_{r-1},L_1,\ldots,L_r).
\]
In the notation of \eqref{eq:test-objects},
$\mathcal I=\{1,\ldots,r\}$, $\mathcal J=\{0\}$,
$E_1=X_d$, $E_i=S_i$ for $i\ge2$, and $N_0=X_d$.
The maximum of the unshifted terms in \eqref{eq:delta} is $d$,
whereas the shifted term is $2$-$\dell_{T_d}(X_d)+1=d+1$.
Since $\findim B_d^{\op}=\gldim B_d^{\op}=d+1$,
this proves \eqref{eq:gap-delta}.
\end{proof}

Thus the difference $\dell_T(M)-\dell_BF(M)$ is unbounded,
even for classical tilting modules
of projective dimension one. The shift of the higher-delooping
index in \cref{prop:comparison} is essential.
Likewise, omitting the shifted term in \eqref{eq:delta} gives an
incorrect bound for every member of this family.

\begin{corollary}[The complete higher-level calculation]\label{cor:gap-profile}
In \cref{thm:unbounded-gap}, put $r=d+1$, $E_1=X_d$
and $E_i=S_i$ for $2\le i\le r$. Label the simple $B_d$-modules
so that $F_d(E_i)=L_i$. For $k\ge1$ and $1\le i\le r$,
\begin{equation}\label{eq:relative-gap-profile}
 k\text{-}\dell_{T_d}(E_i)=
 \begin{cases}
 0,&k<i,\\
 r-i,&k\ge i,
 \end{cases}
 \qquad
 k\text{-}\dell_{B_d}(L_i)=
 \begin{cases}
 0,&k\le i,\\
 r-i,&k>i.
 \end{cases}
\end{equation}
In particular, at the index $k=i<r$ the comparison has gap
$r-i$, and equality holds at every other index.
\end{corollary}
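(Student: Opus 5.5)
The plan is to reuse the chain structure from the proof of \cref{thm:unbounded-gap}. On the relative side, \eqref{eq:gap-relative-chain} lists the nonprojective indecomposables of $\stC$ as the chain $E_1=X_d,E_2=S_2,\ldots,E_{r-1}=S_{r-1}$, with $E_r=S_r$ relatively projective. Relative syzygy moves one step along the chain: $\Omega_{T_d}E_i=E_{i+1}$ for $1\le i<r$. On the ordinary side, every indecomposable $B_d$-module is simple or projective, with $\Omega_{B_d}L_j=L_{j+1}$ for $j<r$ and $L_r=Q_r$ projective. The extra simple $L_0$ satisfies $\Omega_{B_d}L_0=L_1$, and $L_0$ has no relative counterpart, since $L_0\notin\Y_d$. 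Everything then reduces to Krull--Schmidt bookkeeping of indices along these two chains, using the description of stable retracts after \cref{lem:projectives}.

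For the relative levels, fix $i$ and $t\ge0$. Then $\Omega_{T_d}^tE_i=E_{i+t}$, which is zero in $\stC$ once $i+t\ge r$. For any $N\in\Fac T_d$, each nonprojective summand of $N$ is some $E_j$ with $j\ge1$. Hence the nonprojective summands of $\Omega_{T_d}^{t+k}N$ are among the $E_{j+t+k}$ with $j\ge1$, so their indices are at least $1+t+k$. Therefore $E_{i+t}$ (for $i+t<r$) is a stable retract of such a syzygy if and only if we can take $N=E_{i-k}$ with $i-k\ge1$, that is, $k<i$. So for $k<i$ the level is $0$, witnessed by $N=E_{i-k}$ at $t=0$. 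For $k\ge i$ no $t<r-i$ works, while $t=r-i$ works trivially. This gives $r-i$.

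For the ordinary levels, the argument is identical except that witnesses may now be $L_0$ as well as $L_1,\ldots$. The available indices shift by one: nonprojective summands of $\Omega_{B_d}^{t+k}U$ are $L_{j+t+k}$ with $j\ge0$. Retraction onto $L_{i+t}$ with $i+t<r$ is therefore possible if and only if $k\le i$, using $U=L_{i-k}$ at $t=0$. Otherwise the level is $r-i$, reached when $L_r$ becomes projective. The final sentence then follows by comparing the two case splits. They differ only when $k=i$, and this requires $i<r$, since $k\le r-1$ is needed for the ordinary value to be $0$ rather than trivially... wait, if $i=r$ both values are $0$ anyway. At $k=i$ the relative value is $r-i$ and the ordinary value is $0$, and the two values agree for all other $k$.

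The main obstacle is making the index bookkeeping rigorous. Two points need care. First, I must confirm that $\Omega$ of a direct sum is computed summandwise, with minimal approximations, so that every nonprojective summand of an iterated syzygy really lies on the chain at the shifted index. Second, I must check that no summand outside the chain can occur, in particular that projectives $Q_j$ and $\add T_d$-summands vanish stably. Both points follow from the classification of indecomposables already given in the proof of \cref{thm:unbounded-gap}, together with Krull--Schmidt in $\stC$. I would also check the edge case $d=1$, where the chain is just $E_1\to E_2$.
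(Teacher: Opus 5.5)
Your proposal is correct and follows the paper's proof essentially verbatim: the same index bookkeeping along the relative chain $E_1,\dots,E_r$ and the ordinary chain $L_0,\dots,L_r$, where the extra initial object $L_0$ shifts the condition from $k<i$ to $k\le i$, with Krull--Schmidt and the stable-retract description excluding any retraction below level $r-i$. The only thing to tidy is the self-interrupted remark in your third paragraph; the clean statement is that for $i=r$ both levels vanish for every $k$, so the only discrepancy occurs at $k=i<r$, where it equals $r-i$.
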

\begin{proof}
The relative syzygy chain is $E_1,E_2,\ldots,E_r$, with $E_r$
relative projective. If $k<i$, then $E_i=\Omega_{T_d}^kE_{i-k}$,
so its relative $k$-delooping level is zero. If $k\ge i$ and
$0\le t<r-i$, every nonprojective summand of a
$(t+k)$th relative syzygy has index at least $t+k+1>t+i$.
It cannot contain $\Omega_{T_d}^tE_i=E_{i+t}$ as a stable retract.
At $t=r-i$ this syzygy is relative projective.

Over $B_d$, the same chain has the additional initial object
$L_0$. Thus $L_i=\Omega_{B_d}^kL_{i-k}$ is available precisely
when $k\le i$. For $k>i$ and $t<r-i$, every nonprojective
$(t+k)$th syzygy has index at least $t+k>t+i$, giving the same
obstruction. The case $i=r$ gives zero on both sides for every $k$.
\end{proof}

The strict case also makes the equality criterion explicit.
For $1\le i<r$, the inclusion $L_i\to Q_{i-1}$ is a left
projective approximation. Thus $\Sigma_{B_d}^iL_i=L_0$ stably.
The first three projectives in the resolution of $L_0$ become
$P_2\to P_0\to S_0$ after tensoring with $T_d$, so
\[
 \Tor_1^{B_d}(L_0,T_d)=P_1/S_2\cong S_1\ne0.
\]
At the strict index $k=i$, the ordinary canonical witness is
therefore exactly the simple module omitted from $\Y_d$.
This identifies the obstruction detected by
\cref{prop:comparison-equality}.

\begin{example}[The three-vertex case and suspension]\label{ex:strict}
For $d=1$, the algebra $A_1$ is hereditary and
$T_1=S_0\oplus P_0\oplus S_2$ in the notation of
\cref{thm:unbounded-gap}. The only nonprojective
indecomposable of $\Fac T_1$ is $X_1=P_0/S_2$.
The quotient $X_1\to S_0$ is a left $\add T_1$-approximation:
$\Hom(X_1,S_0)=K$ and $\Hom(X_1,P_0)=\Hom(X_1,S_2)=0$.
It is not injective, so $X_1$ is not $T_1$-torsionless, and its
cokernel gives $\Sigma_{T_1}X_1=0$.

Over $B_1$, by contrast, $F_1(X_1)=L_1$ embeds into $Q_0$.
This embedding is a left projective approximation, so
$\Sigma_{B_1}L_1=L_0$ stably. Applying $G_1=-\otimes_{B_1}T_1$ to
$Q_1\to Q_0\to L_0\to0$ gives
$P_0\to S_0\to G_1(L_0)\to0$, whence $G_1(L_0)=0$.
Thus both the change of coefficient module in torsionfreeness
and the application of $G_1$ in relative suspension matter.
Here $\delta_{T_1}=2$, whereas the unshifted test terms have
maximum $1$.
\end{example}

\subsection{Tensor extensions with infinite global dimension}

The gap in \cref{thm:unbounded-gap} persists after adding
local coefficients. We first give the functorial reason.
All tensor products in this subsection are over $K$.

\begin{theorem}[Tensor induction]\label{thm:tensor-induction}
Let $H$ and $R$ be nonzero finite-dimensional basic $K$-algebras,
and let $U$ be a nonzero basic support $\tau$-tilting $H$-module.
Put $\widetilde H=R\otimes H$, $\widetilde U=R\otimes U$,
$B=\End_H(U)$ and $\mathsf I=R\otimes-$.
Then $\widetilde U$ is a basic support $\tau$-tilting
$\widetilde H$-module and
\begin{equation}\label{eq:tensor-endomorphism}
 \widetilde B:=\End_{\widetilde H}(\widetilde U)
 \cong R\otimes B.
\end{equation}
On relative stable categories, induction satisfies
\begin{equation}\label{eq:tensor-suspension}
 \Omega_{\widetilde U}\mathsf I\cong\mathsf I\Omega_U,
 \qquad
 \Sigma_{\widetilde U}\mathsf I\cong\mathsf I\Sigma_U.
\end{equation}
For $M\in\Fac U$ and $k\ge1$, one has, including infinite values,
\begin{equation}\label{eq:tensor-relative-level}
 \begin{gathered}
 k\text{-}\dell_{\widetilde U}(\mathsf I M)=k\text{-}\dell_U(M),
 \qquad \pd_{\widetilde U}(\mathsf I M)=\pd_U M,\\
 M\text{ is }k\text{-}U\text{-torsionfree}
 \quad\Longleftrightarrow\quad
 \mathsf I M\text{ is }k\text{-}\widetilde U\text{-torsionfree}.
 \end{gathered}
\end{equation}
Moreover, as left $\widetilde B$-modules up to projective summands,
\begin{equation}\label{eq:tensor-transpose}
 \Tr_{\widetilde U}(\mathsf I M)\cong R\otimes\Tr_U M.
\end{equation}
For every $V\in\modu H$, one also has
\begin{equation}\label{eq:tensor-ordinary-level}
 \begin{gathered}
 k\text{-}\dell_{\widetilde H}(R\otimes V)
 =k\text{-}\dell_H(V),\\
 \pd_{\widetilde H}(R\otimes V)=\pd_HV.
 \end{gathered}
\end{equation}
If $U$ is classical $1$-tilting, then so is $\widetilde U$.
\end{theorem}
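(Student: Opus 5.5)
The plan is to reduce everything to the exactness of $\mathsf I=R\otimes_K-$ together with the natural isomorphisms
\[
 \Hom_{R\otimes H}(R\otimes X,R\otimes Y)\cong R\otimes\Hom_H(X,Y),
\]
which hold for finite-dimensional $X,Y$: use a finite presentation of $X$ and the fact that Hom over $R\otimes H$ from a free module $R\otimes H$ is evaluation. The same formula gives $\Ext^i_{\widetilde H}(R\otimes X,R\otimes Y)\cong R\otimes\Ext^i_H(X,Y)$, by applying $\mathsf I$ to a projective resolution of $X$. The support $\tau$-tilting assertion is Li--Zhang \cite[Theorem~3.7]{LiZhangTensor}. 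With $X=Y=U$ the Hom formula gives \eqref{eq:tensor-endomorphism}, and it also shows that $\mathsf I$ maps $\add U$ to $\add\widetilde U$. If $U$ is classical $1$-tilting, then $\widetilde U$ has projective dimension at most one and $\Ext^1(\widetilde U,\widetilde U)=R\otimes\Ext^1_H(U,U)=0$. Tensoring the coresolution $0\to H\to U^0\to U^1\to0$ by $R$ gives the coresolution of $\widetilde H$, so $\widetilde U$ is again classical $1$-tilting.

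The key step is to show that $\mathsf I$ carries a right $\add U$-approximation $p_X:U_X\to X$ to a right $\add\widetilde U$-approximation. By the Hom formula, $\Hom(\widetilde U,R\otimes p_X)=R\otimes\Hom_H(U,p_X)$, which is surjective. Every map from an object of $\add\widetilde U$ factors through a summand of $\widetilde U^n$, so this suffices. Exactness of $\mathsf I$ applied to \eqref{eq:relative-cover} then gives $\Omega_{\widetilde U}\mathsf I\cong\mathsf I\Omega_U$. Dually, a left approximation $a_X$ induces a left approximation $R\otimes a_X$, because $\Hom(R\otimes a_X,\widetilde U)=R\otimes\Hom(a_X,U)$. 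Taking cokernels and using \cref{thm:adjunction} gives $\Sigma_{\widetilde U}\mathsf I\cong\mathsf I\Sigma_U$.

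For the transpose \eqref{eq:tensor-transpose}, apply $\mathsf I$ to a proper presentation \eqref{eq:proper-presentation}. The result is proper by the previous step, though possibly not minimal; arbitrary proper presentations change the transpose only by projective summands. The Hom formula with $Y=U$ then identifies the cokernel in \eqref{eq:transpose} with $R\otimes\Tr_UM$. For torsionfreeness, note that ${}_{\widetilde B}\widetilde U=R\otimes{}_BU$. The Ext formula over $B^{\op}$ gives $\Ext^i_{\widetilde B^{\op}}(R\otimes\Tr_UM,R\otimes U)\cong R\otimes\Ext^i_{B^{\op}}(\Tr_UM,U)$. Since $R\ne0$, one side vanishes iff the other does, and \cref{lem:stable-invariance} absorbs the projective summands. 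For projective dimension, induced proper resolutions are proper resolutions, which gives $\pd_{\widetilde U}\mathsf IM\le\pd_UM$. The reverse inequality follows by restriction, as in the delooping argument below. The ordinary statements \eqref{eq:tensor-ordinary-level} are the special case $U=H$, together with \cite[Lemma~4.1]{GuoSymmetry} for one inequality.

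The main obstacle is the inequality $k$-$\dell_U(M)\le k$-$\dell_{\widetilde U}(\mathsf IM)$. Here a witness $N'\in\Fac\widetilde U$ need not be induced, so $\mathsf I$ cannot simply be reflected. I would use restriction $\mathsf R$ to $H$ along $H\to R\otimes H$, $h\mapsto1\otimes h$.
\begin{enumerate}
\item Since $\mathsf R(\widetilde U)\cong U^{\dim R}$, restriction preserves $\Fac$, conflations, and proper $\add$-approximations. Approximation is preserved because $\Hom_H(U,\mathsf R Y)\cong\Hom_{\widetilde H}(R\otimes U,Y)$, and the latter is built from $\Hom_{\widetilde H}(\widetilde U,Y)$. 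Hence $\mathsf R$ commutes with relative syzygies.
\item We have $\mathsf R\mathsf IM=M^{\dim R}$.
\item Applying $\mathsf R$ to a retraction of $\Omega^d_{\widetilde U}\mathsf IM$ from $\Omega^{d+k}_{\widetilde U}N'$ exhibits $(\Omega_U^dM)^{\dim R}$, and hence $\Omega_U^dM$, as a stable retract of $\Omega_U^{d+k}\mathsf RN'$.
\end{enumerate}
The same restriction argument reflects $\pd$. The conversely easy inequality comes from applying $\mathsf I$ to a witness and using \eqref{eq:tensor-suspension}. I would verify step 1 carefully, since it is where basicness and the $\Fac$ closure enter.
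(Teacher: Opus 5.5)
Your proposal is correct and follows essentially the same route as the paper. The shared steps are: Li--Zhang for the support $\tau$-tilting property; the identification $\Hom_{\widetilde H}(R\otimes X,R\otimes Y)\cong R\otimes\Hom_H(X,Y)$ to transport left and right approximations, which gives syzygies, the cokernel model of $\Sigma$, and induced proper presentations for the transpose; the tensor--Ext formula for torsionfreeness; and restriction along $h\mapsto1\otimes h$, which sends $\mathsf IM$ to $M^{\oplus\dim R}$ and thereby reflects delooping levels and relative projective dimension.

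The one assertion you leave unproved is that $\widetilde U$ is \emph{basic}. The paper deduces this from $\widetilde B\cong R\otimes B$ being basic: its radical is $\rad R\otimes B+R\otimes\rad B$, and the quotient is a product of copies of $K$ because $K$ is algebraically closed. Contrary to your closing remark, basicness plays no role in your step~1.
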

\begin{proof}
The support $\tau$-tilting assertion is
\cite[Theorem~3.7]{LiZhangTensor}. The natural tensor--Hom
identifications give \eqref{eq:tensor-endomorphism}; see also
\cite[Lemma~2.6]{LiZhangTensor}. Since $K$ is algebraically
closed and $R,B$ are basic, the radical of $R\otimes B$ is
$\rad R\otimes B+R\otimes\rad B$, with quotient a product of
copies of $K$. Thus $R\otimes B$, and hence $\widetilde U$,
is basic.

Induction $\mathsf I$ and restriction $\mathsf E$
along $H\to R\otimes H$, $h\mapsto1\otimes h$, are exact.
They map $\Fac U$ into $\Fac\widetilde U$ and
$\Fac\widetilde U$ into $\Fac U$, respectively, and preserve
the indicated relative projectives. They therefore induce stable
functors commuting with relative syzygies, by applying them to
projective conflations and using the relative Schanuel lemma.
For suspension, tensor--Hom identifies maps between induced
modules with $R\otimes\Hom_H(-,-)$. Hence a left $\add U$-
approximation becomes a left $\add\widetilde U$-approximation:
this holds first for targets induced from $\add U$, and then
for their direct summands. Exactness of induction and the
cokernel model of \cref{thm:adjunction} give the second
isomorphism in \eqref{eq:tensor-suspension}, naturally in the
stable categories.

Induction sends every relative delooping retraction for $M$ to
one for $\mathsf I M$. Conversely, restriction sends a
retraction for $\mathsf I M$ at level $t$ to one for
$M^{\oplus h}$ at the same level, where $h=\dim_KR>0$.
Composing with the inclusion and projection of one copy of
$M$ gives a retraction for $M$ at level $t$. This proves the
delooping equality with no restriction on the original witness.
Applying the same functors to relative projective resolutions
gives equality of relative projective dimensions, since
$\pd_U(M^{\oplus h})=\pd_U M$.

To prove \eqref{eq:tensor-transpose}, induce a proper $\add U$-
presentation and use
\[
 \Hom_{\widetilde H}(\mathsf I U_i,\widetilde U)
 \cong R\otimes\Hom_H(U_i,U).
\]
Taking the cokernels defining the relative transposes proves
the assertion up to projective summands, which do not affect
positive Ext. Since the first tensor factor is the regular
left $R$-module, the tensor--Ext formula gives, for $j\ge1$,
\begin{equation}\label{eq:tensor-torsionfree-ext}
 \begin{split}
 \Ext^j_{\widetilde B^{\op}}
  (\Tr_{\widetilde U}(\mathsf I M),{}_{\widetilde B}\widetilde U)
 \cong R\otimes
 \Ext^j_{B^{\op}}(\Tr_U M,{}_B U)
 \end{split}
\end{equation}
as $K$-vector spaces; the other terms vanish because the regular
left $R$-module is projective. Since $R\ne0$, simultaneous
vanishing for $1\le j\le k$ is equivalent on the two sides.

Taking $U=H$ gives \eqref{eq:tensor-ordinary-level}.
Finally, if $U$ is classical $1$-tilting, inducing its
projective resolution and tilting coresolution, together with
$\Ext^1_{\widetilde H}(\widetilde U,\widetilde U)
\cong R\otimes\Ext^1_H(U,U)=0$, proves the last assertion.
\end{proof}

For ordinary delooping, \cite[Lemma~4.1(3)]{GuoSymmetry} gives the induction
inequality. Restriction supplies the converse for induced modules. The relative argument
above also controls witnesses anywhere in $\Fac\widetilde U$.
It does not assert preservation for arbitrary modules over
$R\otimes H$. \cref{thm:tensor-induction} also shows that
the torsionfreeness hypotheses on $\Sigma_U^n\Omega_U^nM$ in
\cref{thm:higher} are preserved and reflected by induction.
Locality of $R$ is needed for the dimension formulas below,
but not for these compatibility statements.

\begin{proposition}[Local coefficients over finite global dimension]\label{prop:tensor-finite-global}
Let $H$ be a finite-dimensional basic $K$-algebra of finite
global dimension $g$, and let $R$ be as in
\cref{thm:tensor-induction}. Assume in addition that $R$
is local with $R/\rad R=K$. For $\Lambda=R\otimes H$,
\begin{equation}\label{eq:tensor-dimensions}
 \begin{aligned}
 \findim\Lambda=\Findim\Lambda=\ddell\Lambda=\dell\Lambda&=g,\\
 \findim\Lambda^{\op}=\Findim\Lambda^{\op}
 =\ddell\Lambda^{\op}=\dell\Lambda^{\op}&=g.
 \end{aligned}
\end{equation}
If $\rad R\ne0$, every simple left or right $\Lambda$-module
has infinite projective dimension.
\end{proposition}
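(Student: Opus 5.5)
Write $\Lambda=R\otimes H$ and proceed by squeezing every invariant between a lower bound $g$, coming from a module of projective dimension $g$, and an upper bound $g$, coming from \cref{lem:finite-pd-embedding}. This needs two inputs: how the simple $\Lambda$-modules look, and a way to transport projective dimensions along $\mathsf I=R\otimes-$.

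\emph{Simple modules.} Since $R$ is local with $R/\rad R=K$ and $K$ is algebraically closed, the proof of \cref{thm:tensor-induction} gives $\rad\Lambda=\rad R\otimes H+R\otimes\rad H$. Hence the simple right $\Lambda$-modules are exactly $K\otimes S$, where $K=R/\rad R$ and $S$ runs over the simple $H$-modules. The same description holds on the left, because $\Lambda^{\op}\cong R^{\op}\otimes H^{\op}$ and $R^{\op}$ is again local with residue field $K$. Thus the hypotheses are symmetric, and it suffices to treat right modules for both $\Lambda$ and $\Lambda^{\op}$ by the same argument.

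\emph{Upper bound.} Let $S$ be a simple $H$-module. The induced module $R\otimes S$ contains $\operatorname{soc}R\otimes S\supseteq K\otimes S$, since a simple submodule of ${}_RR$ is the residue field $K$. By \eqref{eq:tensor-ordinary-level}, $\pd_\Lambda(R\otimes S)=\pd_HS\le g$. So every simple $\Lambda$-module embeds in a module of projective dimension at most $g$, and \eqref{eq:simple-embedding-bound} gives $\dell\Lambda\le g$. Combining this with \eqref{eq:dimension-chain} yields
\[
\findim\Lambda^{\op}\le\Findim\Lambda^{\op}\le\ddell\Lambda\le\dell\Lambda\le g .
\]
Running the same argument for $\Lambda^{\op}=R^{\op}\otimes H^{\op}$, using $\gldim H^{\op}=g$, gives the analogous chain with the roles of $\Lambda$ and $\Lambda^{\op}$ exchanged.

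\emph{Lower bound.} Choose $V\in\modu H$ with $\pd_HV=g$. Then \eqref{eq:tensor-ordinary-level} gives $\pd_\Lambda(R\otimes V)=g$, so $\findim\Lambda\ge g$. The same construction over $H^{\op}$ gives $\findim\Lambda^{\op}\ge g$. Since $\findim\le\Findim$, the two chains now force all eight quantities to equal $g$.

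\emph{Simples of infinite projective dimension.} Suppose $\rad R\ne0$. For a simple right $\Lambda$-module $K\otimes S$, restriction to $R$ along $r\mapsto r\otimes1$ is exact and sends projective $\Lambda$-modules to free $R$-modules, because $\Lambda_R$ is free. Hence if $\pd_\Lambda(K\otimes S)<\infty$, then $\pd_R(K^{\dim S})<\infty$. Over a local finite-dimensional algebra, a module of finite projective dimension is free: a minimal resolution $0\to R^{a}\to\cdots$ has a last map with image in $\rad$, so it is not injective unless the resolution is trivial. Since $\rad R\neq 0$, the residue field $K$ is not free, a contradiction. The left case is identical.

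\emph{Main obstacle.} The step I expect to need the most care is identifying the simple $\Lambda$-modules and the embedding $K\otimes S\subseteq R\otimes S$. It relies on the socle of ${}_RR$ containing a copy of $K$ with the correct side action. Once the simples are known to be of the form $K\otimes S$, everything else follows from the earlier results.
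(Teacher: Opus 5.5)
Your proposal is correct and follows essentially the paper's proof. The steps match: simples $K\otimes S$ embed in $R\otimes S$, and \cref{thm:tensor-induction} with \cref{lem:finite-pd-embedding} gives the upper bound; an induced module of projective dimension $g$ gives the lower bound; the same argument runs over $R^{\op}\otimes H^{\op}$; and restriction to $R$ handles the last assertion, where your ``finite projective dimension over a local algebra forces freeness'' replaces the paper's Loewy-length argument to the same effect.

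One small correction. For right $\Lambda$-modules the copy of $K$ must come from the socle of $R_R$, that is, an element $z\ne0$ with $z\rad R=0$, not from ${}_RR$. When $R$ is noncommutative, a left-socle element need not span a right $\Lambda$-submodule of $R\otimes S$.
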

\begin{proof}
Every simple right $\Lambda$-module has the form
$(R/\rad R)\otimes S$ with $S$ simple over $H$.
Choose an embedding $R/\rad R\hookrightarrow R$ and tensor it
with $S$. \cref{thm:tensor-induction} gives an embedding into
$R\otimes S$ of projective dimension at most $g$.
\cref{lem:finite-pd-embedding} yields $\dell\Lambda\le g$.
On the other hand, inducing an $H$-module of projective
dimension $g$ gives $\findim\Lambda\ge g$.
Apply the same arguments to $R^{\op}$ and $H^{\op}$, whose
global dimension is also $g$. The chain
\eqref{eq:dimension-chain} on both sides proves all equalities.

For the last assertion, restriction to $R$ preserves projectives,
while a simple $\Lambda$-module restricts to copies of
$R/\rad R$. This residue module has infinite projective
dimension when $R$ is not semisimple. Indeed, if $R$ has Loewy
length $\ell$, a positive syzygy in a minimal resolution has
Loewy length at most $\ell-1$, whereas every nonzero finitely
generated projective $R$-module is free and has Loewy length
$\ell$. Such a resolution cannot terminate. The proof on the
left is identical.
\end{proof}

The little finitistic-dimension equalities also follow from
\cite[Theorem~1.5]{XiXu}, since $\findim R=\findim R^{\op}=0$.
The embedding argument identifies the ordinary delooping levels;
the big finitistic dimensions and derived delooping levels then
follow from \eqref{eq:dimension-chain}.

\begin{theorem}[Unbounded gaps in infinite global dimension]\label{thm:tensor-gap}
Let $d\ge1$ and let $R$ be a nonsemisimple finite-dimensional
local $K$-algebra with residue field $K$. With the notation of
\cref{thm:unbounded-gap}, put
\[
 \begin{gathered}
 \widehat A_d=R\otimes A_d,\qquad
 \widehat T_d=R\otimes T_d,\qquad
 \widehat X_d=R\otimes X_d,\\
 \widehat B_d=\End_{\widehat A_d}(\widehat T_d)
 \cong R\otimes B_d,\qquad
 \widehat F_d=\Hom_{\widehat A_d}(\widehat T_d,-).
 \end{gathered}
\]
Then $\widehat T_d$ is classical tilting of projective dimension
one, and both $\widehat A_d$ and $\widehat B_d$ have infinite
global dimension. Every simple module on either side of either
algebra has infinite projective dimension. Nevertheless,
\begin{equation}\label{eq:tensor-gap-dimensions}
 \begin{gathered}
 \findim\widehat A_d=\findim\widehat A_d^{\op}=d,\\
 \findim\widehat B_d=\findim\widehat B_d^{\op}=d+1.
 \end{gathered}
\end{equation}
The corresponding big finitistic dimensions and both ordinary
and derived delooping levels have the same respective values.
For the indicated module,
\begin{equation}\label{eq:tensor-gap}
 \begin{gathered}
 k\text{-}\dell_{\widehat T_d}(\widehat X_d)=d\quad(k\ge1),\\
 \dell_{\widehat B_d}\widehat F_d(\widehat X_d)=0,
 \qquad
 k\text{-}\dell_{\widehat B_d}\widehat F_d(\widehat X_d)=d
 \quad(k\ge2).
 \end{gathered}
\end{equation}
\end{theorem}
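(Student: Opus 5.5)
The plan is to deduce everything from \cref{thm:unbounded-gap} by applying \cref{thm:tensor-induction} and \cref{prop:tensor-finite-global}, with $H=A_d$ or $H=B_d$ and $U=T_d$.

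First, since $T_d$ is classical $1$-tilting, \cref{thm:tensor-induction} makes $\widehat T_d$ classical $1$-tilting and gives $\widehat B_d\cong R\otimes B_d$. For the dimensions we use $\gldim A_d=d$ and $\gldim B_d=d+1$ from \eqref{eq:gap-endomorphism}. Applying \cref{prop:tensor-finite-global} with $H=A_d$ and then with $H=B_d$ gives \eqref{eq:tensor-gap-dimensions}. The same proposition gives the equalities with big finitistic dimensions and with ordinary and derived delooping levels, on both sides.

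Since $\rad R\ne0$, the proposition also shows that every simple module on either side has infinite projective dimension. This forces infinite global dimension of $\widehat A_d$ and $\widehat B_d$, because a simple module of infinite projective dimension already rules out finite global dimension.

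For \eqref{eq:tensor-gap}, the relative equality follows from \eqref{eq:tensor-relative-level}: $k$-$\dell_{\widehat T_d}(\widehat X_d)=k$-$\dell_{T_d}(X_d)=d$ by \eqref{eq:gap-all-levels}. For the ordinary levels, we need to identify $\widehat F_d(\widehat X_d)$ with $R\otimes F_d(X_d)$ as a right $R\otimes B_d$-module. This comes from the tensor--Hom identification
\[
 \Hom_{R\otimes A_d}(R\otimes T_d,R\otimes X_d)\cong R\otimes\Hom_{A_d}(T_d,X_d),
\]
and we must check that the identification is compatible with the $\widehat B_d$-action under \eqref{eq:tensor-endomorphism}. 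Then \eqref{eq:tensor-ordinary-level}, applied with $H=B_d$ and $V=F_d(X_d)=L_1$, transfers the ordinary profile of \eqref{eq:gap-all-levels}: the value is $0$ for $k=1$ and $d$ for $k\ge2$.

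The main obstacle is this bimodule-compatibility check. It is routine but has to respect the composition convention for endomorphism rings. The point is that $R$ acts on $R\otimes T_d$ by left multiplication, which commutes with the right $\widehat A_d$-action. Hence $\widehat B_d$ contains $R\otimes B_d$ acting factorwise, and the induced action on Hom-spaces is the obvious tensor action. No new delooping computation is needed, since restriction in \cref{thm:tensor-induction} already controls witnesses anywhere in $\Fac\widehat T_d$ and in $\modu\widehat B_d$.
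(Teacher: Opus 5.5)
Your proposal is correct and follows the paper's proof: \cref{thm:tensor-induction} for the tilting assertion and for the identification $\widehat F_d(\widehat X_d)\cong R\otimes F_d(X_d)$, then transfer of the relative and ordinary profiles in \eqref{eq:gap-all-levels}, and \cref{prop:tensor-finite-global} with $H=A_d$ and $H=B_d$ for the dimension formulas and the simple modules. Your explicit check of the $\widehat B_d$-module compatibility (left multiplication by $R$ commuting with the right $\widehat A_d$-action, so that precomposition acts factorwise) spells out a detail the paper leaves implicit in \cref{thm:tensor-induction}.
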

\begin{proof}
\cref{thm:tensor-induction} gives the tilting assertion and
identifies $\widehat F_d(\widehat X_d)$ with
$R\otimes F_d(X_d)$. Apply its relative and ordinary equalities
to \eqref{eq:gap-all-levels}. Apply
\cref{prop:tensor-finite-global} to $H=A_d$ and
$H=B_d$ to obtain the dimension formulas and the assertions
about simple modules.
\end{proof}

\begin{corollary}[Sharpness at every higher index]\label{cor:all-index-gaps}
For every pair of integers $k,d\ge1$, there exist a
finite-dimensional algebra $A$, a classical tilting $A$-module
$T$ with $\pd_AT=1$, and $M\in\Fac T$ such that, for
$B=\End_A(T)$ and $F=\Hom_A(T,-)$,
\begin{equation}\label{eq:all-index-gaps}
 k\text{-}\dell_BF(M)=0<d=k\text{-}\dell_T(M)
 =(k+1)\text{-}\dell_BF(M).
\end{equation}
Both $A$ and $B$ can be required to have infinite global
dimension, with every simple module on either side of infinite
projective dimension, while
\[
 \findim A=\findim A^{\op}=d+k-1,
 \qquad \findim B=\findim B^{\op}=d+k.
\]
The big finitistic dimensions and the ordinary and derived
delooping levels have the same respective values.
\end{corollary}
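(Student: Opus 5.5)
The plan is to specialize \cref{cor:gap-profile} at a suitable parameter and vertex, and then pass to infinite global dimension by tensor induction, exactly as in \cref{thm:tensor-gap}.

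\emph{Choice of parameters.} Given $k,d\ge1$, apply \cref{thm:unbounded-gap} with parameter $D=d+k-1\ge1$. Then $r=D+1=d+k$, and $T_D$ is classical tilting with $\pd_{A_D}T_D=1$. Take the module $E_k$ of \cref{cor:gap-profile}: $E_1=X_D$ when $k=1$, and $E_k=S_k$ otherwise. This module lies in $\Fac T_D$, and $F_D(E_k)=L_k$. The index $i=k$ satisfies $1\le k<r$ because $d\ge1$.

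\emph{Reading off the levels.} \eqref{eq:relative-gap-profile} gives the following.
\begin{itemize}
\item Since $k\ge i$, we have $k$-$\dell_{T_D}(E_k)=r-k=d$.
\item Since $k\le i$, we have $k$-$\dell_{B_D}(L_k)=0$.
\item Since $k+1>i$, we have $(k+1)$-$\dell_{B_D}(L_k)=r-k=d$.
\end{itemize}
This is exactly \eqref{eq:all-index-gaps} for $A_D$, $T_D$ and $M=E_k$. For the dimensions we use $\gldim A_D=D$ and $\gldim B_D=D+1$ from \eqref{eq:gap-endomorphism}. Global dimension agrees on both sides, so the finitistic dimensions of $A_D$ and $A_D^{\op}$ are $d+k-1$, and those of $B_D$ and $B_D^{\op}$ are $d+k$.

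\emph{Passing to infinite global dimension.} Fix a nonsemisimple local $R$ with residue field $K$, for instance $R=K[x]/(x^2)$. Put $A=R\otimes A_D$, $T=R\otimes T_D$ and $M=R\otimes E_k$.
\begin{itemize}
\item By \cref{thm:tensor-induction}, $T$ is classical $1$-tilting, $B\cong R\otimes B_D$, and $M\in\Fac T$.
\item Also by \cref{thm:tensor-induction}, $k$-$\dell_T(M)=k$-$\dell_{T_D}(E_k)=d$.
\item Tensor--Hom gives $F(R\otimes E_k)\cong R\otimes F_D(E_k)=R\otimes L_k$ as $R\otimes B_D$-modules, as already used in \cref{thm:tensor-gap}. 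Then \eqref{eq:tensor-ordinary-level}, applied with $H=B_D$ at indices $k$ and $k+1$, transfers the two ordinary levels $0$ and $d$.
\item \cref{prop:tensor-finite-global}, applied with $H=A_D$ and $H=B_D$, gives infinite projective dimension of all simple modules on both sides. It also gives the equalities of $\findim$, $\Findim$, $\dell$ and $\ddell$ with $g=d+k-1$ and $g=d+k$ respectively.
\end{itemize}

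\emph{Main obstacle.} All of this is bookkeeping on earlier results. The one point needing care is the identification $\widehat F(R\otimes E_k)\cong R\otimes L_k$ as modules over $R\otimes B_D$, not merely as vector spaces. This compatibility is needed so that \eqref{eq:tensor-ordinary-level} applies to the ordinary levels. I would verify it through the natural isomorphism $\Hom_{R\otimes A_D}(R\otimes T_D,R\otimes X)\cong R\otimes\Hom_{A_D}(T_D,X)$, checking that it is compatible with \eqref{eq:tensor-endomorphism}.
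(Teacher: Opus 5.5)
Your proposal is correct and follows the paper's proof essentially step for step. The paper also takes the parameter $n=d+k-1$, reads the three levels off \cref{cor:gap-profile} at $i=k$, and then induces with a nonsemisimple local $R$ via \cref{thm:tensor-induction}. The only difference is that the paper obtains the dimension and simple-module assertions by citing \cref{thm:tensor-gap}, which is itself just \cref{prop:tensor-finite-global} applied to $A_n$ and $B_n$, as you do directly. The compatibility $\widehat F(R\otimes E_k)\cong R\otimes L_k$ that you flag is the same tensor--Hom identification the paper uses there.
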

\begin{proof}
Put $n=d+k-1$ and use the family $A_n,T_n,B_n$.
Its last vertex is $r=n+1=d+k$.
Take $E_k=X_n$ if $k=1$, and $E_k=S_k$ if $k\ge2$.
\cref{cor:gap-profile}, with $i=k$, gives
\eqref{eq:all-index-gaps} before induction, since $r-k=d$.
Choose any nonsemisimple finite-dimensional local algebra $R$
with residue field $K$ and set
\[
 A=R\otimes A_n,\qquad T=R\otimes T_n,
 \qquad M=R\otimes E_k.
\]
\cref{thm:tensor-induction} preserves the three levels.
The remaining assertions follow from
\cref{thm:tensor-gap} with parameter $n$.
\end{proof}

Consequently, for any fixed $k$, there is no universal additive
constant comparing $k$-$\dell_T$ with $k$-$\dell_BF$, even when
the projective dimension of the tilting module is fixed at one.

For example, take $R=K[u,v]/(u,v)^3$ and $d=4$. Then
$\dim_K\widehat A_4=72$ and $\dim_K\widehat B_4=66$.
Their left and right finitistic dimensions are $4$ and $5$,
respectively, and the $12$-dimensional module $\widehat X_4$
has relative delooping level $4$ while its image has ordinary
delooping level zero. On the same algebra, $R\otimes S_i$
realizes comparison gap $5-i$ at index $k=i$ for $2\le i\le4$.
In general,
$\dim_K\widehat A_d=(2d+4)\dim_KR$ and
$\dim_K\widehat B_d=(2d+3)\dim_KR$.

\section{Cyclic algebras with local coefficients}\label{sec:cyclic}

In this section, we prove \cref{intro:cyclic} and calculate two explicit families. The construction allows arbitrary local coefficients and independent automorphism twists on the arrows. Lifting path modules gives both the upper bound from simple-module embeddings and modules attaining that bound.

\subsection{Construction and a uniform dimension formula}\label{subsec:cyclic-uniform}

Let $R$ be a finite-dimensional local $K$-algebra with
$R/\rad R\cong K$. No commutativity or self-injectivity assumption is made
on $R$. Let $Q$ be the oriented cycle with vertices
$\mathbb Z/n\mathbb Z$, where $n\ge2$, and arrows $a_i:i\to i+1$.
Fix a tuple $c=(c_0,\ldots,c_{n-1})$ satisfying
\begin{equation}\label{eq:cyclic-admissibility}
 c_i\ge2,\qquad c_i\le c_{i+1}+1,
\end{equation}
and choose $K$-algebra automorphisms
$\boldsymbol{\sigma}=(\sigma_0,\ldots,\sigma_{n-1})$ of $R$.
Put a copy $R_i$ of $R$ at each vertex, and write $r_i$ for the image
of $r\in R$ in $R_i$. Define $\Lambda=\Lambda_R(c;\boldsymbol{\sigma})$
by the coefficient multiplications in the $R_i$, the crossing relations
\begin{equation}\label{eq:cyclic-crossing}
 a_i r_{i+1}=\sigma_i(r)_i a_i\qquad(r\in R),
\end{equation}
and the truncation relations
\begin{equation}\label{eq:cyclic-truncation}
 p_{i,c_i}=0,\qquad
 p_{i,\ell}=a_i a_{i+1}\cdots a_{i+\ell-1},\qquad p_{i,0}=e_i.
\end{equation}
Lengths count cycle arrows only. The underlying Nakayama algebra is
\begin{equation}\label{eq:underlying-nakayama}
 N=N(c)=KQ/(p_{i,c_i}\mid i\in\mathbb Z/n\mathbb Z).
\end{equation}

Here is a normal form for this presentation. Set
$\sigma_{i,\ell}=\sigma_i\circ\cdots\circ\sigma_{i+\ell-1}$, with
$\sigma_{i,0}=1$. Before truncation, multiplication is given by
\[
 (r_i p_{i,\ell})(s_{i+\ell}p_{i+\ell,t})
   =(r\sigma_{i,\ell}(s))_i p_{i,\ell+t},
\]
and products with nonmatching endpoints vanish. This multiplication is
associative. The span of the terms with $\ell\ge c_i$ is a two-sided
ideal: closure under prepending an arrow follows from
$c_{i-1}\le c_i+1$, and closure under appending is immediate. Thus, if
$\mathcal B$ is a $K$-basis of $R$, a basis of $\Lambda$ is
\begin{equation}\label{eq:cyclic-basis}
 b_i p_{i,\ell},\qquad b\in\mathcal B,\quad 0\le\ell<c_i.
\end{equation}
In particular,
\begin{equation}\label{eq:cyclic-dimension}
 \dim_K\Lambda=(\dim_KR)\sum_i c_i.
\end{equation}
The ideal generated by the $\rad R_i$ and the cycle arrows is nilpotent,
since each $\sigma_i$ preserves $\rad R$. Its quotient is $K^n$,
so it is $\rad\Lambda$. Denote the simple right $\Lambda$-modules by
$S_i$ and the simple right $N$-modules by $U_i$.

Write $P_i=e_i\Lambda$, let $\mathfrak a$ be the ideal generated by the
cycle arrows, and put
\[
 M(i,\ell)=P_i/P_i\mathfrak a^\ell,\qquad
 V(i,\ell)=e_iN/e_i(\rad N)^\ell
 \quad(1\le\ell\le c_i).
\]
The modules $V(i,\ell)$ are all the indecomposable right $N$-modules.
The module $M(i,\ell)$ has simple top $S_i$, but its coefficient
layers need not be simple. In particular, $W_i=M(i,1)$ is the regular
$R$-module supported at vertex $i$.

\begin{lemma}[Lifting path modules]\label{lem:cyclic-syzygies}
For $1\le\ell<c_i$,
\begin{equation}\label{eq:cyclic-syzygy}
 \Omega_\Lambda M(i,\ell)\cong M(i+\ell,c_i-\ell),
 \qquad M(i,c_i)=P_i.
\end{equation}
Consequently, including infinite values,
\begin{equation}\label{eq:cyclic-pd-lifting}
 \pd_\Lambda M(i,\ell)=\pd_N V(i,\ell).
\end{equation}
Moreover, $S_{i+\ell-1}$ embeds in $M(i,\ell)$.
\end{lemma}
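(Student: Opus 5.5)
We will work entirely in the normal form \eqref{eq:cyclic-basis}. The first step is to describe $P_i=e_i\Lambda$ as a right $\Lambda$-module. It has $K$-basis $b_ip_{i,t}$ with $b\in\mathcal B$ and $0\le t<c_i$, and it is filtered by the submodules $P_i\mathfrak a^\ell$. Each $P_i\mathfrak a^\ell$ is spanned by the terms with $t\ge\ell$. The multiplication rule
\[
(r_ip_{i,\ell})(s_{i+\ell}p_{i+\ell,t})=(r\sigma_{i,\ell}(s))_ip_{i,\ell+t}
\]
then yields the following claim: left multiplication by $p_{i,\ell}$ gives a surjection of right modules
\[
\pi:P_{i+\ell}\to P_i\mathfrak a^\ell,\qquad x\mapsto p_{i,\ell}x.
\]
This is a right $\Lambda$-module map because $p_{i,\ell}=e_ip_{i,\ell}e_{i+\ell}$. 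It sends $s_{i+\ell}p_{i+\ell,t}$ to $\sigma_{i,\ell}(s)_ip_{i,\ell+t}$.

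Since $\sigma_{i,\ell}$ is bijective on $R$, $\pi$ maps the basis element indexed by $(s,t)$ bijectively onto the coefficient layer of $P_i$ in path length $\ell+t$. This layer is zero exactly when $\ell+t\ge c_i$. Hence $\ker\pi=P_{i+\ell}\mathfrak a^{c_i-\ell}$, and this is a proper submodule because $c_i-\ell\le c_{i+\ell}$ by iterating \eqref{eq:cyclic-admissibility}. This gives
\[
P_i\mathfrak a^\ell\cong M(i+\ell,c_i-\ell).
\]
The identity $M(i,c_i)=P_i$ is immediate from the truncation relation.

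To get the syzygy formula, note that for $1\le\ell<c_i$ the quotient $P_i\to M(i,\ell)$ is a projective cover, since its kernel $P_i\mathfrak a^\ell$ lies in $\rad P_i$. Therefore $\Omega_\Lambda M(i,\ell)\cong M(i+\ell,c_i-\ell)$. The main point to check carefully is that the twist $\sigma_{i,\ell}$ does not obstruct the $\Lambda$-linearity or the kernel computation. Both reduce to associativity of the normal-form multiplication and bijectivity of $\sigma_{i,\ell}$, so I expect no real obstacle there.

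Next, the same computation with $R=K$ and all $\sigma_i=1$ gives the classical Nakayama formula $\Omega_NV(i,\ell)\cong V(i+\ell,c_i-\ell)$. The two syzygy sequences are therefore indexed identically. Iterating, $\Omega^m_\Lambda M(i,\ell)$ is projective, i.e.\ of the form $M(j,c_j)$, at the same step $m$ at which $\Omega^m_NV(i,\ell)=V(j,c_j)$ becomes projective. If this never happens on one side, it never happens on the other, since minimal resolutions are unique and $M(j,\ell')$ with $\ell'<c_j$ is not projective. This proves \eqref{eq:cyclic-pd-lifting}, including infinite values; the case $\ell=c_i$ gives $0$ on both sides.

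Finally, for the embedding, consider the bottom layer of $M(i,\ell)$, which is the image of $R_ip_{i,\ell-1}$. It is a submodule killed by $\mathfrak a$, and by the crossing relation it is isomorphic to the regular $R$-module placed at vertex $i+\ell-1$, twisted by $\sigma_{i,\ell-1}$. Take a nonzero element $z$ of the right socle of $R$. Then $z_ip_{i,\ell-1}$ is annihilated by $\rad R_{i+\ell-1}$, because $p_{i,\ell-1}r_{i+\ell-1}=\sigma_{i,\ell-1}(r)_ip_{i,\ell-1}$ and $\sigma_{i,\ell-1}$ preserves $\rad R$. It is also annihilated by the arrows, so it spans a copy of $S_{i+\ell-1}$ inside $M(i,\ell)$.
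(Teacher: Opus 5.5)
Your proposal is correct and follows essentially the same route as the paper. It uses left multiplication by $p_{i,\ell}$ from $P_{i+\ell}$ onto $p_{i,\ell}\Lambda=P_i\mathfrak a^\ell$, with the normal form and bijectivity of $\sigma_{i,\ell}$ giving the kernel $P_{i+\ell}\mathfrak a^{c_i-\ell}$, then projective covers and the identical recursion for $R=K$ to match projective dimensions, and a nonzero $z$ with $z\rad R=0$ for the simple line. One small correction: the inequality $c_i-\ell\le c_{i+\ell}$ is what makes $c_i-\ell$ a valid length at vertex $i+\ell$, so that the quotient is $M(i+\ell,c_i-\ell)$ and $\pi$ reaches every layer of $P_i\mathfrak a^\ell$; the kernel is a proper submodule simply because $c_i-\ell\ge1$.
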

\begin{proof}
The kernel of $P_i\to M(i,\ell)$ is $p_{i,\ell}\Lambda$. Left
multiplication by $p_{i,\ell}$ defines a surjection
$P_{i+\ell}\to p_{i,\ell}\Lambda$. By the normal form
\eqref{eq:cyclic-basis}, its kernel is
$P_{i+\ell}\mathfrak a^{c_i-\ell}$. Here
$c_i-\ell\le c_{i+\ell}$ follows from
\eqref{eq:cyclic-admissibility}. Invertibility of the coefficient
automorphisms ensures that no extra kernel occurs. Since these kernels
are contained in the radicals of the projectives, the maps are
projective covers. The same calculation with $R=K$ gives the identical
recursion for $V(i,\ell)$. A module $M(i,\ell)$ is projective exactly
when $\ell=c_i$: its projective cover is $P_i$, and for $\ell<c_i$ this
cover has a nonzero kernel. Thus the two minimal resolutions terminate
at exactly the same step, proving \eqref{eq:cyclic-pd-lifting}.

Choose $0\ne z\in R$ with $z\rad R=0$. The line
$Kz_i p_{i,\ell-1}$ in $M(i,\ell)$ is supported at vertex
$j=i+\ell-1$. It is annihilated by all cycle arrows and by
$\rad R_j$, because $\sigma_{i,\ell-1}(\rad R)=\rad R$.
Since $R/\rad R=K$, this line is a submodule isomorphic to $S_j$.
\end{proof}

In particular, $S_i$ embeds in $W_i$. Whenever all the $W_i$ have
finite projective dimension, \cref{lem:finite-pd-embedding} gives
\begin{equation}\label{eq:cyclic-upper}
 \findim\Lambda^{\op}\le\dell\Lambda
 \le\max_i\pd_\Lambda W_i.
\end{equation}
The next theorem also covers the case where some $W_i$ have infinite
projective dimension.

\begin{theorem}[Local coefficients and automorphism twists]\label{thm:cyclic-uniform}
For every $R,c,\boldsymbol{\sigma}$ as above, put
$\Lambda=\Lambda_R(c;\boldsymbol{\sigma})$. Then
\begin{equation}\label{eq:cyclic-uniform}
 \findim\Lambda=\findim\Lambda^{\op}
 =\dell\Lambda=\dell\Lambda^{\op}=\findim N(c)<\infty.
\end{equation}
Thus these four invariants depend only on the underlying Nakayama
algebra. There is no assumption that $N(c)$ has finite global dimension.
\end{theorem}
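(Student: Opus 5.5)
The plan is to prove the four equalities by a closed cycle of inequalities that uses both $\Lambda$ and $\Lambda^{\op}$. This avoids comparing $\findim N(c)$ with $\findim N(c)^{\op}$ directly. For each side there are two ingredients.

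\emph{Upper bound.} I will show $\dell\Lambda\le\findim N(c)$. By Ringel's analysis of Nakayama algebras \cite{RingelNakayama}, every simple $N$-module $U_j$ embeds in some indecomposable $V(i,\ell)$ of finite projective dimension, with $\pd_N V(i,\ell)\le\findim N(c)$. Since $V(i,\ell)$ is uniserial, $U_j$ is its socle, so $j=i+\ell-1$. By \cref{lem:cyclic-syzygies}, $S_{i+\ell-1}$ embeds in $M(i,\ell)$, and $\pd_\Lambda M(i,\ell)=\pd_N V(i,\ell)$. Then \cref{lem:finite-pd-embedding} gives $\findim\Lambda^{\op}\le\dell\Lambda\le\findim N(c)$.

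\emph{Lower bound.} I will show $\findim\Lambda\ge\findim N(c)$. Since $N(c)$ is Nakayama, $\findim N(c)$ is finite (Gustafson) and is attained by some indecomposable, necessarily of the form $V(i,\ell)$. Its lift $M(i,\ell)$ has the same finite projective dimension by \eqref{eq:cyclic-pd-lifting}.

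\emph{Passing to the opposite algebra.} I will identify $\Lambda^{\op}$ with an algebra of the same type, $\Lambda_{R^{\op}}(c';\boldsymbol\sigma')$. It lives on the reversed cycle, with local coefficients $R^{\op}$, which again has residue field $K$. The twists are the inverse automorphisms, suitably reindexed, since \eqref{eq:cyclic-crossing} can be rewritten as $r'_i a_i=a_i\,\sigma_i^{-1}(r')_{i+1}$. The lengths $c'$ are those of the opposite Nakayama algebra $N(c)^{\op}$, i.e.\ the Loewy lengths of its indecomposable projectives (the indecomposable injectives of $N(c)$). Reading the basis \eqref{eq:cyclic-basis} from the other end shows that $c'$ satisfies \eqref{eq:cyclic-admissibility} and that the truncation relations correspond, so the underlying Nakayama algebra is $N(c)^{\op}$. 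I expect this identification to be the main bookkeeping obstacle. The reindexing must be checked carefully, including that $c'_i\ge2$; this holds because a length-one projective would contradict the conditions $c_i\ge2$ on the cycle.

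\emph{Closing the cycle.} With both sides in hand, the upper and lower bounds for $\Lambda$ and for $\Lambda^{\op}$ give
\[
 \findim N(c)\le\findim\Lambda\le\dell\Lambda^{\op}\le\findim N(c)^{\op}
 \le\findim\Lambda^{\op}\le\dell\Lambda\le\findim N(c).
\]
Hence all these quantities are equal, and they are finite by Gustafson's bound. This also yields $\findim N(c)=\findim N(c)^{\op}$ as a by-product. The remaining risk is the precise form of Ringel's result. I need that every simple is a socle of a module whose projective dimension is finite and at most $\findim N(c)$; this is automatic for any module of finite projective dimension, so only the existence of the embedding is needed. If Ringel's formula is stated only for indecomposable modules, uniseriality makes the socle identification immediate.
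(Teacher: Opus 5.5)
Your proposal is correct and follows the paper's proof. You lift Nakayama modules through \cref{lem:cyclic-syzygies}, get the upper bound from simple-socle embeddings via \cref{lem:finite-pd-embedding}, and lift a module attaining $\findim N(c)$ for the lower bound. You then rerun the argument on $\Lambda^{\op}$, realized as the reversed-cycle construction over $R^{\op}$ with inverse twists and underlying algebra $N(c)^{\op}$. The only difference is at the end: the paper takes $\findim N(c)=\findim N(c)^{\op}=\max_i e_i'$ from Ringel's formula. Your six-term cycle needs from Ringel only that every simple embeds in a module of finite projective dimension, and it yields $\findim N(c)=\findim N(c)^{\op}$ as a by-product.
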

\begin{proof}
For each $i$, let $E_i$ be the injective envelope of $U_i$ in
$\modu N$, and set
\[
 e_i'=\min\{\pd_N U_i,\pd_N E_i\}.
\]
Ringel's formula for Nakayama algebras \cite{RingelNakayama} states that
every $e_i'$ is finite and that
\begin{equation}\label{eq:ringel-input}
 f:=\findim N=\findim N^{\op}=\max_i e_i'.
\end{equation}
Choose $H_i\in\{U_i,E_i\}$ with $\pd_NH_i=e_i'$. It is
indecomposable with socle $U_i$, so $H_i=V(t_i,\ell_i)$ for suitable
$t_i,\ell_i$ with $t_i+\ell_i-1=i$ in $\mathbb Z/n\mathbb Z$.
By \cref{lem:cyclic-syzygies}, the module
$\widetilde H_i=M(t_i,\ell_i)$ satisfies
\[
 S_i\lhook\joinrel\longrightarrow\widetilde H_i,
 \qquad \pd_\Lambda\widetilde H_i=e_i'.
 \]
\cref{lem:finite-pd-embedding} and an index attaining the maximum
in \eqref{eq:ringel-input} therefore give
\begin{equation}\label{eq:cyclic-first-side}
 \findim\Lambda^{\op}\le\dell\Lambda\le f
 \quad\hbox{and}\quad
 \findim\Lambda\ge f.
\end{equation}

We verify the opposite-algebra step explicitly. Reverse the cycle.
At a vertex $j$, the surviving paths ending at $j$ have consecutive
lengths $0,\ldots,d_j-1$, where
\[
 d_j=\#\{\ell\ge0\mid \ell<c_{j-\ell}\}.
\]
Indeed, every suffix of a surviving path survives. In $\Lambda^{\op}$
these become the paths starting at $j$, and the coefficient algebra
is $R^{\op}$. For the reversed arrow $b_i=a_i^{\op}:i+1\to i$,
\eqref{eq:cyclic-crossing} becomes
\[
 b_i r_i^{\op}
   =\sigma_i^{-1}(r)_{i+1}^{\op}b_i.
\]
After relabeling the vertices, this is the same construction with
lengths $d_j$ and inverse automorphisms. Its underlying Nakayama
algebra is $N^{\op}$. The argument proving
\eqref{eq:cyclic-first-side} thus applies to $\Lambda^{\op}$.
By \eqref{eq:ringel-input}, it gives
\[
 \findim\Lambda\le\dell\Lambda^{\op}\le f
 \quad\hbox{and}\quad
 \findim\Lambda^{\op}\ge f.
\]
Combining the two pairs of inequalities proves \eqref{eq:cyclic-uniform}.
\end{proof}

\begin{corollary}\label{cor:cyclic-big-derived}
Under the hypotheses of \cref{thm:cyclic-uniform},
\[
 \Findim\Lambda=\Findim\Lambda^{\op}
 =\ddell\Lambda=\ddell\Lambda^{\op}=\findim N(c).
\]
\end{corollary}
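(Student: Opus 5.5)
The corollary follows by sandwiching the big finitistic dimensions and the derived delooping levels between quantities already computed in \cref{thm:cyclic-uniform}. Write $f=\findim N(c)$. I would apply the chain \eqref{eq:dimension-chain} of Guo--Igusa twice, once with $\Lambda$ and once with $\Lambda^{\op}$:
\[
 \findim\Lambda^{\op}\le\Findim\Lambda^{\op}\le\ddell\Lambda\le\dell\Lambda,
 \qquad
 \findim\Lambda\le\Findim\Lambda\le\ddell\Lambda^{\op}\le\dell\Lambda^{\op}.
\]
By \eqref{eq:cyclic-uniform}, the two outer terms of each chain equal $f$. The first chain therefore gives $\Findim\Lambda^{\op}=\ddell\Lambda=f$, and the second gives $\Findim\Lambda=\ddell\Lambda^{\op}=f$. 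These four equalities are exactly the assertion, and finiteness is already part of \cref{thm:cyclic-uniform}.

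The only step that needs care is the sidedness in \eqref{eq:dimension-chain}. There, $\dell\Lambda$ and $\ddell\Lambda$ are computed from simple right $\Lambda$-modules and bound the finitistic dimensions of $\Lambda^{\op}$. The second chain is the same inequality applied to the finite-dimensional algebra $\Lambda^{\op}$, using $(\Lambda^{\op})^{\op}=\Lambda$. \cref{thm:cyclic-uniform} has no hypothesis that is asymmetric in the sides, so both of its sides are available. Beyond this bookkeeping, the whole argument is a direct substitution and I expect no real obstacle.
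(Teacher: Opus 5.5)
Your proposal is correct and follows the paper's proof. Like the paper, you sandwich $\Findim\Lambda^{\op}$ and $\ddell\Lambda$ between $\findim\Lambda^{\op}$ and $\dell\Lambda$ using the Guo--Igusa chain \eqref{eq:dimension-chain}, repeat the argument for $\Lambda^{\op}$, and take the outer values from \cref{thm:cyclic-uniform}.
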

\begin{proof}
Write $f=\findim N(c)$. The theorem and
\eqref{eq:dimension-chain} give
\[
 f=\findim\Lambda^{\op}\le\Findim\Lambda^{\op}
 \le\ddell\Lambda\le\dell\Lambda=f.
\]
Apply the same argument to $\Lambda^{\op}$.
\end{proof}

\begin{remark}[Relation to twisted tensor products]\label{rem:cyclic-xixu}
The finitistic-dimension equalities in
\cref{thm:cyclic-uniform} can also be deduced from
\cite[Theorem~1.5]{XiXu}. Indeed, $\Lambda$ contains
$C_0=\prod_iR_i$, the scalar path subalgebra $N$, and their common
maximal semisimple subalgebra $S=\prod_iKe_i$.
The normal form identifies multiplication with an isomorphism
$C_0\otimes_S N\cong\Lambda$ of $C_0$--$N$-bimodules, and the
crossing relations give
$\rad N\,\rad C_0=\rad C_0\,\rad N$.
Thus $\Lambda$ is a twisted tensor product in the sense of Xi--Xu.
A finite-dimensional local algebra has finitistic dimension zero
on both sides, so $\findim C_0=0$. Xi--Xu's inequalities, applied
on each side, give $\findim\Lambda=\findim N$ and
$\findim\Lambda^{\op}=\findim N^{\op}$.
The proof above also computes both delooping levels and supplies
the embeddings and finite-projective-dimension modules realizing
the common value.
\end{remark}

\begin{corollary}\label{cor:cyclic-simple-infinite}
If $\rad R\ne0$, every simple left or right
$\Lambda_R(c;\boldsymbol{\sigma})$-module has infinite projective
dimension. In particular, its global dimension is infinite.
\end{corollary}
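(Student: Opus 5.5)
We will mimic the argument at the end of the proof of \cref{prop:tensor-finite-global}: restrict to the coefficient algebra at a vertex, and observe that a simple module restricts to copies of the residue module $K=R/\rad R$, which has infinite projective dimension over $R$ when $\rad R\ne0$.

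\textbf{Right modules.} Fix a simple right $\Lambda$-module $S_i$. We will use the subalgebra $C_0=\prod_jR_j$ of \cref{rem:cyclic-xixu}.

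First we show that every projective right $\Lambda$-module restricts to a projective $C_0$-module. By the normal form \eqref{eq:cyclic-basis}, each $P_j=e_j\Lambda$ decomposes as a right $C_0$-module into the pieces $(R_j p_{j,\ell})$ for $0\le\ell<c_j$. The piece $(R_jp_{j,\ell})$ is supported at vertex $j+\ell$ and is a free right $R_{j+\ell}$-module of rank one. Indeed, by the multiplication rule the map $r\mapsto 1_jp_{j,\ell}\cdot r_{j+\ell}=\sigma_{j,\ell}(r)_j p_{j,\ell}$ is bijective, because the twist $\sigma_{j,\ell}$ is invertible. Hence $\Lambda_{C_0}$ is free over each $R_j$ and projective over $C_0$.

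Restriction is exact, so a finite projective resolution of $S_i$ over $\Lambda$ restricts to a finite projective resolution over $C_0$. Now $S_i$ restricted to $C_0$ is $K$ placed at vertex $i$, that is, the residue module of $R_i\cong R$. Since $C_0$ is a product, the $i$th component of that restricted resolution is a finite projective resolution of $K$ over $R$.

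This is impossible when $\rad R\ne0$, by the Loewy length argument already given in \cref{prop:tensor-finite-global}. In a minimal resolution, every positive syzygy has Loewy length less than $\ell(R)$, whereas every nonzero free module has Loewy length exactly $\ell(R)$, so the resolution cannot terminate. (Alternatively, over a local algebra a module of finite projective dimension has a finite free resolution, so it suffices that $\pd_RK$ is infinite.) Hence $\pd_\Lambda S_i=\infty$.

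\textbf{Left modules.} Left $\Lambda$-modules are right $\Lambda^{\op}$-modules. The proof of \cref{thm:cyclic-uniform} shows that $\Lambda^{\op}$ is again of the form $\Lambda_{R^{\op}}(d;\boldsymbol\sigma^{-1})$ with lengths $d_j\ge2$. The ring $R^{\op}$ is local with residue field $K$ and $\rad R^{\op}\ne0$. Applying the right-module argument to $\Lambda^{\op}$ therefore gives infinite projective dimension for every simple left module. The admissibility of the $d_j$ needs no separate check: the normal-form basis on the opposite side is obtained directly by reversing paths, and that is all the argument uses. Alternatively, one can run the right-module argument verbatim on left modules, since the same normal form expresses $\Lambda$ as a free left module over each $R_j$.

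\textbf{Conclusion and main obstacle.} Since the simple modules have infinite projective dimension, the global dimension is infinite. The only real point is the freeness of $\Lambda$ over the coefficient algebras in spite of the twists, and invertibility of the $\sigma_i$ handles it.
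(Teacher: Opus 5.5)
Your proof is correct and is essentially the paper's argument: restricting to the coefficient algebra sends projectives to projectives by the normal form (the twists being invertible), a simple module restricts to the residue module $K$, and the Loewy-length argument of \cref{prop:tensor-finite-global} rules out a finite resolution of $K$. The only difference is cosmetic. You restrict to $C_0=\prod_jR_j$ and cut down by the central idempotent $e_i$, whereas the paper restricts to the diagonal copy of $R$. Your parenthetical ``alternative'' for $\pd_RK=\infty$ is circular and should simply be dropped.
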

\begin{proof}
Embed $R$ diagonally by $r\mapsto\sum_i r_i$. The normal form
\eqref{eq:cyclic-basis} shows that restriction sends every left or
right $\Lambda$-projective to a projective $R$-module. Each path
component is a regular $R$-module, possibly with an automorphism
twist. A simple $\Lambda$-module restricts to $R/\rad R$.

The residue module has infinite projective dimension on both
sides by the Loewy-length argument in the proof of
\cref{prop:tensor-finite-global}. Restriction therefore
proves the assertion.
\end{proof}

\begin{example}[An underlying algebra of infinite global dimension]\label{ex:cyclic-infinite-base}
Take $c=(3,3,4)$ and $n=3$. For $N=N(c)$ the simple $U_0$ satisfies
$\Omega_N^2U_0\cong U_0$, so $\gldim N=\infty$. The injective
envelopes of $U_0,U_1,U_2$ are respectively
$V(1,3),V(2,3),V(2,4)$, with projective dimensions $0,2,0$.
Since $\Omega_N^3U_1\cong U_0$ and $\pd_NU_2=1$,
\eqref{eq:ringel-input} gives $\findim N=2$.
\cref{thm:cyclic-uniform} yields
\[
 \findim\Lambda_R((3,3,4);\boldsymbol{\sigma})
 =\findim\Lambda_R((3,3,4);\boldsymbol{\sigma})^{\op}=2
\]
for every choice of the automorphisms. The same value holds for the
two delooping levels. The lifted module $M(2,3)$ has a minimal
resolution
\[
 0\longrightarrow P_0\xrightarrow{a_2}P_2
 \xrightarrow{p_{2,3}}P_2\longrightarrow M(2,3)\longrightarrow0.
\]
For $R=K[u,v]/(u,v)^3$, this gives a $60$-dimensional algebra.
Here $\pd_\Lambda W_0=\pd_\Lambda W_1=\infty$, so the bound
\eqref{eq:cyclic-upper} using only the vertex modules is insufficient;
the lifted injective envelopes provide the required embeddings.
\end{example}

\subsection{Two explicit families with prescribed finitistic dimension}\label{subsec:cyclic-families}

Fix $m\ge3$ and $q\in K^\times$, and put
\[
 R_m=K[u,v]/(u,v)^m,\qquad
 h_m=\dim_KR_m=\frac{m(m+1)}2.
\]
We use the notation
\[
 \begin{gathered}
 \Lambda_m(c;q)=\Lambda_{R_m}(c;\boldsymbol{\sigma}),
 \qquad
 \sigma_i=1\ (0\le i<n-1),\\
 \sigma_{n-1}(u)=qu,\quad \sigma_{n-1}(v)=v.
 \end{gathered}
\]
At vertex $i$, write $u_i,v_i$ for the two coefficient generators.
In \eqref{eq:cyclic-basis} one may take
$\mathcal B=\{u^rv^s\mid r,s\ge0,\ r+s<m\}$.
The following two families are consequences of
\cref{thm:cyclic-uniform}; the computations also give modules
attaining their finitistic dimensions.

\begin{example}[Even finitistic dimensions]\label{ex:cyclic-even}
For $n\ge3$ set
\[
 B_{n,m}^+(q)=\Lambda_m\bigl((n,n+1,\ldots,n+1);q\bigr).
\]
Then
\begin{equation}\label{eq:cyclic-even-result}
 \begin{gathered}
 \dim_KB_{n,m}^+(q)=h_m(n^2+n-1),\\
 \findim B_{n,m}^+(q)=\findim\bigl(B_{n,m}^+(q)\bigr)^{\op}=2n-2,\\
 \dell B_{n,m}^+(q)=\dell\bigl(B_{n,m}^+(q)\bigr)^{\op}=2n-2.
 \end{gathered}
\end{equation}
Indeed, \eqref{eq:cyclic-syzygy} gives
\[
 \Omega^{2r}W_0\cong M(0,r+1)\quad(0\le r\le n-1),
\]
whose last term is $P_0$. Also,
\[
 \Omega W_{n-1}\cong P_0,\qquad
 \Omega^2W_i\cong W_{i+1}\quad(1\le i\le n-2).
\]
All intermediate syzygies before the indicated projective terms are nonprojective. Therefore
\begin{equation}\label{eq:cyclic-even-pd}
 \pd W_0=2n-2,\qquad
 \pd W_i=2n-2i-1\quad(1\le i\le n-1).
\end{equation}
By \eqref{eq:cyclic-pd-lifting}, these are also the projective dimensions of the simple modules over $N(n,n+1,\ldots,n+1)$. Hence that Nakayama algebra has global dimension $2n-2$. \cref{thm:cyclic-uniform} proves the four homological equalities in \eqref{eq:cyclic-even-result}; the dimension formula follows from \eqref{eq:cyclic-dimension}.
\end{example}

\begin{example}[Odd finitistic dimensions]\label{ex:cyclic-odd}
For $n\ge4$ set
\[
 B_{n,m}^-(q)=\Lambda_m\bigl((n-1,\ldots,n-1,n);q\bigr).
\]
Then
\begin{equation}\label{eq:cyclic-odd-result}
 \begin{gathered}
 \dim_KB_{n,m}^-(q)=h_m(n^2-n+1),\\
 \findim B_{n,m}^-(q)=\findim\bigl(B_{n,m}^-(q)\bigr)^{\op}=2n-3,\\
 \dell B_{n,m}^-(q)=\dell\bigl(B_{n,m}^-(q)\bigr)^{\op}=2n-3.
 \end{gathered}
\end{equation}
In this case \eqref{eq:cyclic-syzygy} gives
\[
 \Omega W_{n-1}\cong P_0,\qquad
 \Omega^2W_i\cong W_{i-1}\quad(0\le i\le n-3),
\]
where $W_{-1}=W_{n-1}$. For the remaining vertex one has
\[
 \Omega^{2r}W_{n-2}\cong M(n-2-r,r+1)
 \quad(0\le r\le n-2),
\]
ending in $M(0,n-1)=P_0$. It follows that
\begin{equation}\label{eq:cyclic-odd-pd}
 \begin{gathered}
 \pd W_i=2i+3\quad(0\le i\le n-3),\\
 \pd W_{n-2}=2n-4,\qquad \pd W_{n-1}=1.
 \end{gathered}
\end{equation}
The maximum is $2n-3$, attained by $W_{n-3}$. By \eqref{eq:cyclic-pd-lifting},
\[
 \gldim N(n-1,\ldots,n-1,n)=2n-3.
\]
\cref{thm:cyclic-uniform} and \eqref{eq:cyclic-dimension} prove \eqref{eq:cyclic-odd-result}.
\end{example}

\begin{proposition}\label{prop:cyclic-structure}
Every simple left or right module over either family in \cref{ex:cyclic-even,ex:cyclic-odd} has infinite projective dimension. Neither family is standardly stratified for any ordering of the simple modules.
\end{proposition}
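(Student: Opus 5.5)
The first assertion is a direct specialization of an earlier result. Both families are $\Lambda_{R_m}(c;\boldsymbol\sigma)$ with $R_m=K[u,v]/(u,v)^m$ and $m\ge3$, so $\rad R_m\ne0$. Hence \cref{cor:cyclic-simple-infinite} gives infinite projective dimension for every simple left and right module, on both $B^+_{n,m}(q)$ and $B^-_{n,m}(q)$. The real work is the second assertion, and I plan to reduce it to the underlying Nakayama algebra $N(c)$.

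\textbf{Reduction to $N(c)$.} Fix a total order on $\mathbb Z/n\mathbb Z$.
\begin{enumerate}
\item Compute the standard module $\Delta(i)=P_i/\operatorname{tr}_{j>i}P_i$ of $\Lambda$. Maps $P_j\to P_i$ correspond to $e_i\Lambda e_j$. By the normal form \eqref{eq:cyclic-basis}, every such element is an $R$-multiple of a path $p_{i,\ell}$ with $i+\ell=j$. Hence the trace is $p_{i,\ell_i}\Lambda$, where $\ell_i\ge1$ is the least length such that vertex $i+\ell_i$ is larger than $i$; set $\ell_i=c_i$ if no such length below $c_i$ exists.
\item Conclude $\Delta(i)=M(i,\ell_i)$. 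The corresponding standard module of $N(c)$ is $V(i,\ell_i)$.
\item By \cref{lem:cyclic-syzygies}, the kernel of $P_i\to\Delta(i)$ is $M(i+\ell_i,c_i-\ell_i)$.
\item Iterate the same description of kernels to show the following. A $\Delta$-filtration of $P_i$ by modules $M(j,\ell)$ corresponds exactly to a decomposition of the uniserial module $e_iN$ into consecutive segments $V(j,\ell_j)$. The rank of the coefficient layers plays no role, since each layer $M(j,\ell)$ is "$R$ times" $V(j,\ell)$ along the path.
\end{enumerate}
I expect this to give: $\Lambda$ is standardly stratified for a given order if and only if $N(c)$ is.

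\textbf{Combinatorial criterion for $N(c)$.} Since $N(c)$ is uniserial, standard stratification for a given order becomes the following tiling condition. For each $i$, the path of length $c_i$ starting at $i$ must be cut into consecutive segments: first $[i,i+\ell_i)$, then $[j,j+\ell_j)$ with $j=i+\ell_i$, and so on. Each later segment must start at a vertex $j$ larger than $i$, and the segments must end exactly at total length $c_i$. In particular, one needs $\sum\ell_j=c_i$ with no overshoot. I would then run through candidate orders using the following facts:
\begin{itemize}
\item The maximal vertex $t$ has $\ell_t=c_t$, which is automatic.
\item For every other vertex, $\ell_i\le n-1$, because the cycle visits every vertex within $n-1$ steps and $c_i\ge n-1$.
\end{itemize}

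\textbf{Main obstacle.} The hardest step is showing that no order satisfies the tiling condition for the specific tuples $(n,n+1,\ldots,n+1)$ and $(n-1,\ldots,n-1,n)$. My first plan is to examine the minimal vertex $s$. Its standard module is $\Delta(s)=M(s,1)=W_s$, since the next vertex is already larger. The remaining length $c_s-1$ must then be tiled starting at $s+1$, by segments whose lengths are determined by the order. I would show the tiling overshoots by tracking where the path of length $c_i\in\{n-1,n,n+1\}$ wraps around past $s$ again. A segment containing $s$ in its interior is forced, because $s$ is minimal, and its length then conflicts with the endpoint $i+c_i$. Carrying this out cleanly probably needs a case split according to whether the special vertex, namely $0$ in the even family and $n-1$ in the odd family, is minimal or maximal in the order. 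If the tiling argument becomes unwieldy, the fallback is to use Ext-vanishing between standard modules: $\Ext^1(\Delta(i),\Delta(j))=0$ for $i\ge j$ in a standardly stratified algebra. Using \eqref{eq:cyclic-syzygy}, one would exhibit a pair violating this for every order.
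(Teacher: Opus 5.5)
Your first assertion is handled correctly by \cref{cor:cyclic-simple-infinite}. Your tiling reformulation of $\Delta$-filtrations is also sound: a $\Delta$-filtration of a module with simple top $S_j$ must have $\Delta(j)$ as its top factor, so the filtration is forced. Passing to $N(c)$ is harmless but unnecessary.

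\textbf{The gap.} The decisive step, ruling out \emph{every} total order, is never carried out, and neither of your two mechanisms targets the real obstruction.

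\emph{The Ext fallback cannot work.} Take $B_3^+$, that is $c=(3,4,4)$, with the order $0<1<2$. Then $\Delta(0)=M(0,1)$, $\Delta(1)=M(1,1)$ and $\Delta(2)=P_2$. The syzygies $\Omega\Delta(0)=M(1,2)$ and $\Omega\Delta(1)=M(2,3)$ are generated at vertices $1$ and $2$. Hence they have no nonzero maps to $\Delta(0)$ or $\Delta(1)$, which are supported at a single vertex. So $\Ext^1(\Delta(i),\Delta(j))=0$ for all $i\ge j$. Yet $P_0$ is not $\Delta$-filtered. Here $\ell_0=\ell_1=1$ and $\ell_2=c_2=4$, so after the segments $[0],[1]$ only length $1$ remains at vertex $2$.

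\emph{The minimal-vertex plan is misdirected.} Take the order $1<0<2$. In $P_0$ the segment from $0$ has length $2$, contains the minimal vertex $1$ in its interior, and fits. The overshoot again occurs at the \emph{maximal} vertex.

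\textbf{The missing idea.} Use the maximal vertex $j$ of the order.
\begin{enumerate}
\item Since $j$ is maximal, $\Delta(j)=P_j$, so $\ell_j=c_j$.
\item Vertex $j$ can never lie in the interior of a segment starting at $k\neq j$, because that segment stops at the first vertex larger than $k$, and $j>k$.
\item Let $t$ be the least nonnegative residue of $j-i$. If the path of length $c_i$ from $i$ meets $j$ at distance $t<c_i$, a full segment of length $c_j$ must start there. Since $c_i-t\le c_j$ by \eqref{eq:cyclic-admissibility}, this forces $c_i-t=c_j$.
\end{enumerate}
This is the paper's argument in module-theoretic form. The trace ideal $I_j=\Lambda e_j\Lambda$ is filtered by $\Delta(j)=P_j$, so it would have to be right projective. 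But $e_iI_j=p_{i,t}\Lambda\cong M(j,c_i-t)$ is a nonprojective summand whenever $0<c_i-t<c_j$.

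It then remains only to exhibit such an $i$ for each $j$:
\begin{enumerate}
\item Even family: for $j>0$ take $i=0$, giving $c_i-t=n-j<n+1$. For $j=0$ take $i=1$, giving $c_i-t=2<n$.
\item Odd family: for $j>0$ take $i=j-1$, giving $c_i-t=n-2<c_j$. For $j=0$ take $i=2$, giving $c_i-t=1<n-1$; this uses $n\ge4$.
\end{enumerate}
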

\begin{proof}
The assertion about simple modules follows from \cref{cor:cyclic-simple-infinite}.

For the stratification assertion, put $I_j=\Lambda e_j\Lambda$. If $\Lambda$ were standardly stratified, the ideal $I_j$ for a maximal vertex would be projective as a right $\Lambda$-module; see \cite[Section~1.2]{CruzMarczinzik}. Indeed, this trace ideal is filtered by the maximal standard module $e_j\Lambda$, so the filtration splits. We show that no vertex has this property. Let $t$ be the least nonnegative residue of $j-i$ modulo $n$. If $t<c_i$, then the shortest path from $i$ to $j$ gives
\begin{equation}\label{eq:cyclic-trace}
 e_iI_j=p_{i,t}\Lambda\cong M(j,c_i-t).
\end{equation}
Whenever $0<c_i-t<c_j$, this is a nonprojective direct summand of the right module $I_j$.

For $B_{n,m}^+(q)$ and $j>0$, choose $i=0$. Then $t=j$ and $c_i-t=n-j<c_j=n+1$. For $j=0$, choose $i=1$; then $c_i-t=2<c_0=n$. For $B_{n,m}^-(q)$ and $j>0$, choose $i=j-1$. This gives $c_i-t=n-2<c_j$. For $j=0$, choose $i=2$; since $n\ge4$, one has $c_i-t=1<c_0=n-1$. Thus all $I_j$ are nonprojective, proving the claim.
\end{proof}

For $K=\mathbb C$, $m=3$ and $q=2$, write $B_n^\pm=B_{n,3}^\pm(2)$. Some explicit values are given in \cref{tab:cyclic-examples}. Both families have infinite global dimension by \cref{prop:cyclic-structure}, while together they realize every integer $d\ge4$ as their left and right finitistic dimension.

\begin{table}[htbp]
\centering
\caption{Cyclic examples with $R_3=\mathbb C[u,v]/(u,v)^3$ and $q=2$.}\label{tab:cyclic-examples}
\begin{tabular}{@{}cccc@{}}
\toprule
Algebra & Cycle-path truncation lengths & $\dim_KB$ & $\findim B=\findim B^{\op}$\\
\midrule
$B_3^+$ & $(3,4,4)$ & $66$ & $4$\\
$B_4^-$ & $(3,3,3,4)$ & $78$ & $5$\\
$B_4^+$ & $(4,5,5,5)$ & $114$ & $6$\\
$B_5^-$ & $(4,4,4,4,5)$ & $126$ & $7$\\
$B_5^+$ & $(5,6,6,6,6)$ & $174$ & $8$\\
\bottomrule
\end{tabular}
\end{table}

For instance, the cycle relations of $B_3^+$ are generated by $a_0a_1a_2=0$ and $a_1a_2a_0a_1=0$. A minimal projective resolution of $W_0$ is
\begin{equation}\label{eq:cyclic-66-resolution}
 0\longrightarrow P_0\xrightarrow{a_2}P_2
 \xrightarrow{a_0a_1}P_0\xrightarrow{a_1a_2}P_1
 \xrightarrow{a_0}P_0\longrightarrow W_0\longrightarrow0,
\end{equation}
where the labeled maps are left multiplication by the indicated paths. This displays a module of projective dimension four in the $66$-dimensional algebra. Varying $m$ gives infinitely many different algebra dimensions for each fixed finitistic dimension in either family.

\begin{remark}\label{rem:cyclic-scope}
These computations apply the specialization $A=T=\Lambda$ of the article, so $\C=\modu\Lambda$, $\mathcal J$ is empty, and $\delta_T=\dell\Lambda$. In the two explicit families, the modules $W_i$ supply finite-projective-dimension embeddings for all simple modules. For general $c$, \cref{thm:cyclic-uniform} also uses lifts of injective Nakayama modules, as illustrated by \cref{ex:cyclic-infinite-base}. If $q=1$, the construction is the ordinary tensor product
\[
 \Lambda_m(c;1)\cong R_m\otimes_K
 \bigl(KQ/(p_{i,c_i}\mid0\le i<n)\bigr),
\]
where $Q$ is the oriented cycle without the coefficient loops. The uniform theorem and \cref{cor:cyclic-big-derived} show that the ordinary and derived delooping levels and both finitistic dimensions are independent of all coefficient automorphisms. \cref{rem:cyclic-xixu} explains the existing finitistic-dimension result; the additional calculations identify the delooping levels and exhibit modules of maximal finite projective dimension. The complementary applications with nonprojective tilting modules are \cref{thm:unbounded-gap,thm:tensor-gap}.
\end{remark}

\par\medskip
\noindent\textbf{Acknowledgements.}\quad
The authors are supported by the National Natural Science Foundation of China (Nos.~12171207 and 12371038).

\par\medskip
\noindent\textbf{Data availability.}\quad
All mathematical constructions and proofs supporting the results are
contained in the article. No external datasets are required.

\par\medskip
\noindent\textbf{Use of generative AI.}\quad
ChatGPT (OpenAI) assisted with language editing, literature searches,
and the exploration and drafting of examples during
manuscript preparation. Responsibility for the final mathematical content
rests with the authors.

\begingroup
\hypersetup{urlcolor=black}

\endgroup

\medskip
{\footnotesize
\noindent\textbf{Mingfei Xu}\\
School of Mathematics and Statistics, Jiangsu Normal University,\\
Xuzhou 221116, Jiangsu, P.~R.~China\\
E-mail: \href{mailto:mfxu123@163.com}{mfxu123@163.com}

\vspace{5mm}
\noindent\textbf{Xiaojin Zhang}\\
School of Mathematics and Statistics, Jiangsu Normal University,\\
Xuzhou 221116, Jiangsu, P.~R.~China\\
E-mail: \href{mailto:xjzhang@jsnu.edu.cn}{xjzhang@jsnu.edu.cn}
\par}

\end{document}